\let\footnote=\endnote
\tikzset{
	-Latex,auto,node distance =1 cm and 1 cm,semithick,
	state/.style ={ellipse, draw, minimum width = 0.7 cm},
	point/.style = {circle, draw, inner sep=0.04cm,fill,node contents={}},
	bidirected/.style={Latex-Latex,dashed},
	el/.style = {inner sep=2pt, align=left, sloped}
}
\pgfplotsset{compat=newest}
\newcommand{\exclude}[1]{}
\algnewcommand{\Or}{\textbf{or}}
\algnewcommand{\And}{\textbf{and}}
\declaretheorem[name=Proposition]{proposition}
\declaretheorem[name=Claim]{claim}
\declaretheorem[name=Corollary]{corollary}
\declaretheorem[name=Example]{example}
\def\E{{\mathbb E}}
\def\Pr{{\mathbb{P}}}
\def\Re{\mathbb{R}}
\def\I{\mathbb{I}}
\def\hat{\widehat}
\def \bxi{\boldsymbol{\xi}}
\def \P{\mathcal{P}}
\def \F{\mathcal{F}}
\def \Ze{{\mathbb{Z}}}
\def\one{\I}
\def\O{{\mathcal O}}
\def\N{{\mathcal N}}
\def\F{{\mathcal F}}
\def\P{{\mathcal P}}
\def\X{{\mathcal X}}
\def\C{{\mathcal C}}
\def\Re{{\mathbb R}}
\def\O{{\mathcal O}}
\newcommand{\rxi}{\bm{\xi}}
\newcommand{\rzeta}{\bm{\zeta}}
\newcommand{\e}{\mathbf{e}}
\def\alsox{{\mathrm{ALSO}{\text-}\mathrm{X}}}
\def\DC{{\mathrm{DC}}}
\def\AM{{\mathrm{AM}}}
\def\CVaR{{\mathrm{CVaR}}}
\def\VaR{{\mathrm{VaR}}}
\def\alsoxt{{\mathbf{ALSO}{\text-}\mathbf{X}}}
\def\CVaRt{{\mathbf{CVaR}}}
\newcommand{\trxi}{\tilde{\bm{\xi}}}
\newcommand{\trzeta}{\tilde{\bm{\zeta}}}
\DeclarePairedDelimiter\ceil{\lceil}{\rceil}
\DeclarePairedDelimiter\floor{\lfloor}{\rfloor}
\DeclareMathOperator{\sign}{sign}
\def\x{\vect{x}}
\newcommand{\vect}[1]{\boldsymbol{\bm{#1}}}
\renewcommand{\proof}{\noindent{\em Proof. }}
\newcommand*{\QEDA}{\hfill\ensuremath{\square}}
\newcommand*{\QEDB}{\hfill\ensuremath{\diamond}}
\definecolor{mygreen}{RGB}{28,172,0} 
\definecolor{mypurple}{RGB}{170,55,241}
\definecolor{myorange}{rgb}{0.77,0.38,0.06}
\begin{document}
	\TITLE{ALSO-X and ALSO-X+: \\
		Better Convex Approximations for Chance Constrained Programs}
	
	\ARTICLEAUTHORS{%
		\AUTHOR{Nan Jiang}
		\AFF{Department of Industrial \& Systems Engineering, Virginia Tech, Blacksburg, VA 24061, \EMAIL{jnan97@vt.edu} \URL{}}
		\AUTHOR{Weijun Xie}
		\AFF{Department of Industrial \& Systems Engineering, Virginia Tech, Blacksburg, VA 24061, \EMAIL{wxie@vt.edu} \URL{}}
	} 
	\RUNAUTHOR{Nan Jiang and Weijun Xie}
	
	\RUNTITLE{ALSO-X and ALSO-X+}
	\ABSTRACT{%
		{In a chance constrained {program} (CCP), {decision-makers seek} the best decision whose probability of violating the uncertainty constraints is within the prespecified risk level.}
		{As a CCP is often nonconvex and is difficult to solve to optimality,} much effort has been devoted to developing convex inner approximations for a CCP, among which the conditional value-at-risk ($\CVaR$) has been known to be the best for more than a decade. This paper studies and generalizes the $\alsox$, originally proposed by \textbf{A}hmed, \textbf{L}uedtke, \textbf{SO}ng, and \textbf{X}ie (2017), for solving a CCP. We first show that the $\alsox$ resembles a bilevel optimization, where the upper-level problem is to find the best objective function value and enforce the feasibility of a CCP for a given decision from the lower-level problem, and the lower-level problem is to minimize the expectation of constraint violations subject to the upper bound of the objective function value provided by the upper-level problem. This interpretation motivates us to prove that when uncertain constraints are convex in the decision variables, $\alsox$ always outperforms the $\CVaR$ approximation. We further show (i) sufficient conditions under which $\alsox$ can recover an optimal solution to a CCP; (ii) an equivalent bilinear programming formulation of a CCP, inspiring us to enhance $\alsox$ with a convergent alternating minimization method ($\alsox +$); (iii) {an extension of $\alsox$ and $\alsox +$ 
			to distributionally robust chance constrained programs (DRCCPs) under $\infty-$Wasserstein ambiguity set.} Our numerical study demonstrates the effectiveness of the proposed methods.
	}%
	
	
	\KEYWORDS{Chance Constraint; $\CVaR$; Distributionally Robust; Bilievel Optimization}
	
	\maketitle
	
	\section{Introduction}
	Let us consider a chance constrained program (CCP) of the form
	\begin{align}
	\label{eq_ccp}
	v^* =\min_{\bm x \in \mathcal{X} } \left\{ \bm{c}^\top\bm x\colon\Pr\left\{\tilde{\bm\xi} \colon g(\bm x,\tilde{\bm\xi})\leq 0\right\} \geq1-\varepsilon\right \}.
	\end{align}
	The goal of CCP \eqref{eq_ccp} is to find a solution $\bm x\in \mathcal{X}$ that minimizes the objective $\bm{c}^\top\bm x$ and is subject to the uncertain constraints $g(\bm x,\tilde{\bm\xi})\leq 0$ satisfied with probability $1-\varepsilon$, where $\varepsilon\in (0,1)$ is a preset risk level. In this paper, we focus on the convex setting, i.e., throughout this paper, we make the following assumptions
	\begin{enumerate}[label={A\arabic*}]
		\item \label{A_1} {Given a probability space $(\Omega,\F,\Pr)$ where the probability measure $\Pr$ is defined on the measurable space $(\Omega,\F)$ equipped with the sigma algebra $\mathcal{F}$, the random vector $\trxi:\Omega\rightarrow\Xi$ is a measurable mapping from $\Omega$ {to} $\Re^{m}$ {with support set} $\Xi\subseteq\Re^m$.} Function $ g(\bm{x},\bm{\xi})= \max_{i\in[I]} g_i(\bm x,\bm\xi)$, where $g_i(\bm x,\bm\xi)\colon \Re^n\times \Xi\rightarrow {\Re}$ for all $i\in[I]: =\{1,\dots,I\}$ and $g_i(\bm x,\bm {\xi})$ is convex and {lower semi-continuous} in $\bm x$ for {almost every} $\bm {\xi}\in\Xi$; 
	{\item \label{A_2} Set $\X$ is nonempty, closed, convex, contained in a closed convex cone $\C$. That is, $\emptyset\neq\X\subseteq \C$, where $\C$ is a closed pointed convex cone; and
		\item   \label{A_3}   The feasible region of CCP \eqref{eq_ccp} is nonempty and the objective cost vector $\bm c\in \mathrm{int}(\C^*)\cup\{\bm 0\}$, where $\C^*$ is the dual cone of $\C$ and  $\mathrm{int}(\cdot)$ denotes the interior of a set.}
	\end{enumerate}
	If $I = 1$, CCP \eqref{eq_ccp} involves a single chance
	constraint, and otherwise, it contains a joint chance constraint. 
	Note that Assumption~\ref{A_1} follows from existing chance constraint literature (see, e.g., \citealt{nemirovski2007convex,ahmed2017nonanticipative,ahmed2018relaxations}) {and the lower semi-continuity assumption in Assumption~\ref{A_1} and the closedness of set $\X$ in Assumption~\ref{A_2}  together  guarantee that the feasible region of CCP \eqref{eq_ccp} is closed (see Proposition~\ref{prop_lower_continuous_closed} in Appendix~\ref{sec_append_lower_continuous_closed}),} while { Assumption~\ref{A_2} is quite standard for convex analysis (see, e.g., section 5 in \citealt{nemirovski2001lectures}), and Assumption~\ref{A_3} guarantees that there exists an optimal solution in CCP \eqref{eq_ccp} (see also the discussions in \citealt{xie2020bicriteria}), 
for the sake of simplicity. }  
It is worthy of mentioning that if Assumption~\ref{A_2} does not hold, for example, if set $\mathcal{X}$ is mixed-integer, then the main result that $\alsox$ is better than $\CVaR$ {approximation} does not hold (see \Cref{cvar_better_nonconvex}). Thus, {the convexity assumption in Assumption~\ref{A_2}} is crucial to this key result.
	
	\subsection{Relevant Literature}
	
	Since its first appearance to tackle uncertain constraints in the decision-making problems \citep{charnes1963deterministic,charnes1958cost}, CCPs have been studied and applied in many areas. For example, \cite{pagnoncelli2009sample} considered a portfolio selection problem, where the decision-makers plan to achieve the targeted return rate with high probability. Chance constraints have also been employed to ensure a high level of service in transportation assignment problems \citep{dentcheva2000concavity} or facility location problems \citep{lejeune2016solving}. In power systems (see, e.g., \citealt{bienstock2014chance,shiina1999numerical, xie2017distributionally,zhang2016distributionally}), the decision-makers would like to restrict the probability of capacity violations of transmission lines within a small risk level. \cite{deng2016decomposition} studied a scheduling problem in healthcare, where planners want to have a low level of overtime servers. Interested readers are referred to the work by \cite{ahmed2008solving} for more CCP applications. Albeit important, a CCP encounters two main difficulties --- its feasible region is often nonconvex, and checking the feasibility of a CCP for a given solution, in general, is challenging.

	To address the aforementioned difficulties, there are several approaches proposed in the literature to solve CCP \eqref{eq_ccp}. One method is to investigate the conditions where the feasible region in CCP \eqref{eq_ccp} is convex. For example, as demonstrated in \cite{prekopa2013stochastic}, if the random vector $\tilde{\bm\xi} $ follows a log-concave probability distribution and $g(\bm x,\tilde{\bm\xi})$ is quasi-convex, the feasible region defined in CCP \eqref{eq_ccp} is convex. More convexity results can be found in {\cite{henrion2007structural,henrion2008convexity,lagoa2005probabilistically,henrion2011convexity}}. However, it may still be hard to evaluate the probabilistic constraint in CCP \eqref{eq_ccp} precisely even if it is convex. The second method is to consider approximations of chance constraints using the Monte Carlo approach, e.g., sampling average approximation (SAA) proposed by \cite{luedtke2008sample}. The advantage of SAA is to approximate a chance constraint with the one under finite support with arbitrary accuracy, and the latter can be recast as a mixed-integer program {\citep{ruszczynski2002probabilistic}}. The third method is to propose convex inner approximations of the nonconvex chance constraint (see, e.g., \citealt{nemirovski2006scenario,calafiore2006scenario,nemirovski2007convex}). The best-known convex approximation is to replace the chance constraint in CCP \eqref{eq_ccp} with the conditional value-at-risk ($\CVaR$) approximation proposed by \cite{nemirovski2007convex}. The $\CVaR$ approximation usually returns a feasible yet sub-optimal solution. Other nonlinear programming approaches have been developed recently, such as difference-of-convex functions approximation \citep{hong2011sequential}, a smooth sampling-based approximation \citep{pena2020solving}. These approaches often find stationary points of a CCP and thus are not known whether they can be more effective than $\CVaR$ approximation or not. In \citeyear{ahmed2017nonanticipative}, \textbf{A}hmed, \textbf{L}uedtke, \textbf{SO}ng, and \textbf{X}ie \citep{ahmed2017nonanticipative} proposed a heuristic scheme, called ``$\alsox$'' in this paper, which could effectively solve all of their testing instances within the 4\% optimality gap. Albeit numerically promising, its theoretical performances are not clear. This paper {fills} this gap. One {main} result in this paper is that $\alsox$ outperforms the $\CVaR$ approximation.
	
	When the distributional information is limited, as a better alternative to the conventional CCPs, distributionally robust chance constrained programs (DRCCPs) have attracted much attention (see, e.g., \citealt{hanasusanto2015distributionally,hanasusanto2017ambiguous,xie2018deterministic,zymler2013distributionally,chen2018data,xie2019distributionally}), where the latter is shown to be {effective for} decision-making under uncertainty without fully knowing the probability distribution. Interested readers are referred to the work \citep{rahimian2019distributionally} for a comprehensive review. The particular ambiguity set we focus on in this paper is {type $\infty-$}Wasserstein ambiguity set. 

	\subsection{Summary of Contributions}
	In this paper, we study and generalize $\alsox$ for solving a CCP and its distributionally robust counterpart (i.e., DRCCP). Our main contributions are summarized below.
	\begin{enumerate}[label=(\roman*)]
		\item We show that when the uncertain constraints are convex, $\alsox$ always outperforms $\CVaR$, the well-known best convex approximation, and provide sufficient conditions under which $\alsox$ can return an optimal solution to CCP \eqref{eq_ccp}.
		\item We derive an equivalent bilinear programming formulation of CCP \eqref{eq_ccp}, which inspires us to improve $\alsox$ with a convergent Alternating Minimization ($\AM$) method, termed ``$\alsox +$.'' We show that {the solution from the $\AM$ method is at least as good as that from} the difference-of-convex ($\DC$) approach.
		\item {We extend $\alsox$ and $\CVaR$ approximation to solve DRCCPs under $\infty-$Wasserstein ambiguity set, termed ``the worst-case $\alsox$,''  and ``the worst-case $\CVaR$ approximation,'' respectively. We show that under $\infty-$Wasserstein ambiguity set, the worst-case $\alsox$ outperforms the worst-case $\CVaR$ approximation.}
	\end{enumerate}
	The roadmap of contributions of our paper is shown in Figure~\ref{Roadmap}.
	\begin{figure}[htbp]
		\begin{center}
			\centering
			\includegraphics[width=0.8\textwidth]{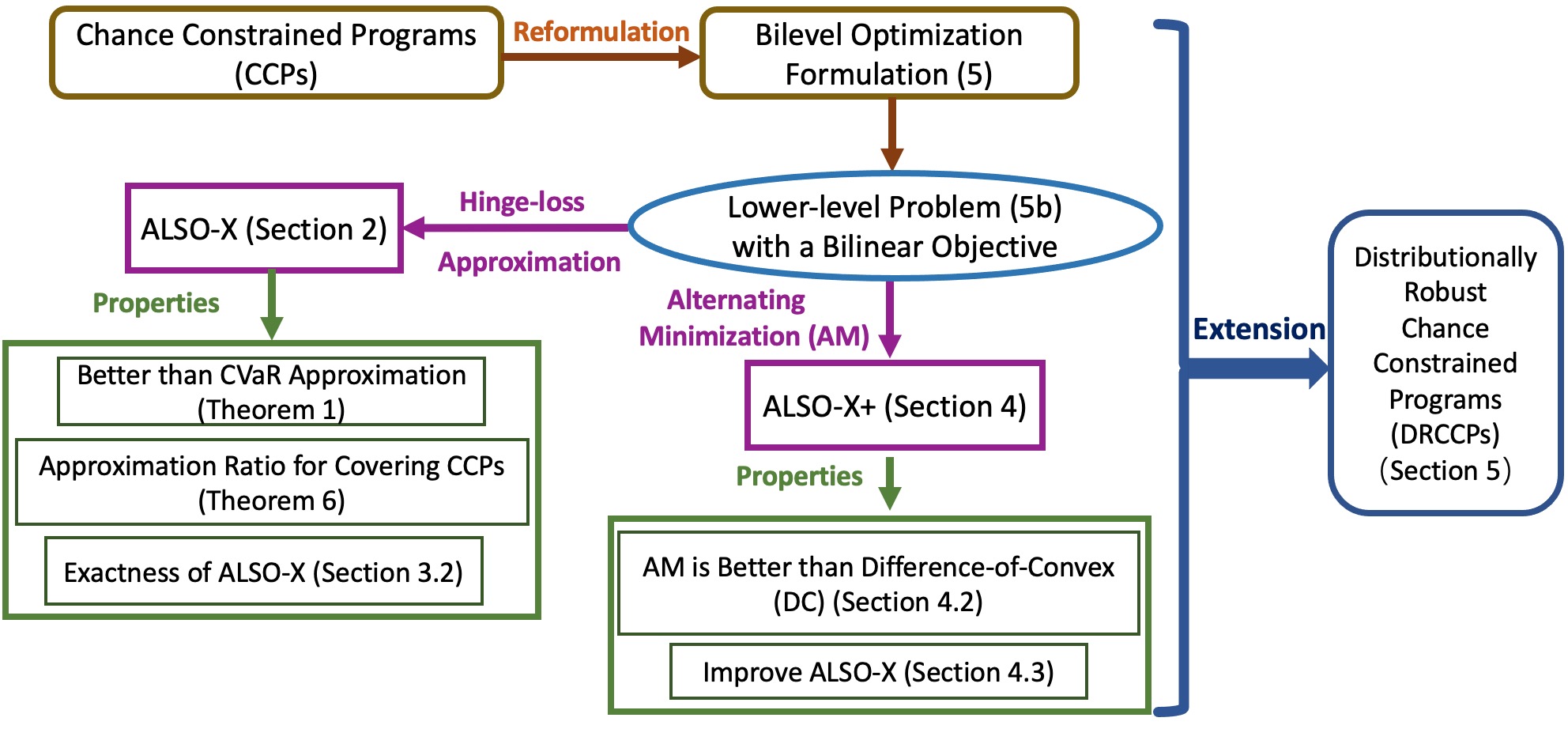}
			\caption{A Roadmap of the Main Results in This Paper.}
			\label{Roadmap}
		\end{center}
	\end{figure}

	\noindent\textbf{Organization.}
	The remainder of the paper is organized as follows. Section~\ref{alsox_property} details the properties of $\alsox$. Section~\ref{alsox_strengths} describes the strengths of $\alsox$. Section~\ref{ap} provides the formulation and properties of $\alsox +$. {Section~\ref{drccp} extends and studies $\alsox$ and $\alsox+$ to solve DRCCPs under $\infty-$Wasserstein ambiguity set.} 
	Section~\ref{numerical_study} numerically illustrates the proposed methods. Section~\ref{sec_conclusion} concludes the paper.

	\noindent\textbf{Notation.} The following notation is used throughout the paper. We use bold-letters (e.g., $\vect{x},\vect{A}$) to denote vectors and matrices and use corresponding non-bold letters to denote their components. Given a vector or matrix $\bm{x}$, its zero norm $\left \| \bm{x}\right \|_0$ denotes the number of its nonzero elements. {We let $\|\cdot\|_*$ denote the dual norm of a general norm $\|\cdot\|$.} We let $\bm{e}$ be the vector or matrix of all ones, and let $\bm{e}_i$ be the $i$th standard basis vector.
	Given an integer $n$, we let $[n]:=\{1,2,\ldots,n\}$, and use $\Re_+^n:=\{\bm {x}\in \Re^n:x_i\geq0, \forall i\in [n]\}$. Given a real number $t$, we let $(t)_+:=\max\{t,0\}$. Given a finite set $I$, we let $|I|$ denote its cardinality. We let $\tilde{\bm\xi}$ denote a random vector and denote its realizations by $\bm\xi$. Given a vector $\bm{x}\in \Re^n$, let $\mathrm{supp}(\bm{x})$ be its support, i.e., $\mathrm{supp}(\bm{x}):=\{i\in [n]: x_i\neq0\}$. Given a probability distribution $\Pr$ on $\Xi$, we use $\Pr\{A\}$ to denote $\Pr\{\bm\xi:\text{condition} \ A(\bm\xi) \ \text{holds}\}$ when $A(\bm\xi)$ is a condition on $\bm\xi$, and to denote $\Pr\{\bm\xi\colon \bm\xi \in A\}$ when $A \subseteq \Xi$ is $\Pr-$measurable. {We follow the convention of CCP literature (see, e.g., \citealt{ruszczynski2002probabilistic,nemirovski2007convex,luedtke2008sample}) for the definition of an indicator function, that is, given a set $R$,
		its normal cone at point $\x\in R$ is denoted {by} $\N_R(\x):=\{\bm{h}: \bm{h}^\top(\hat{\x}-\x)\leq 0, \forall \hat{\x}\in R\}$, {and $\emptyset$ if $\x\notin R$;} and the indicator function $\I(\bm x \in R) =1$ if $\bm x \in R$, and $0$, otherwise.} We use $\floor{x}$ to denote the largest integer $y$ satisfying $y\leq x$, for any $x\in \Re$. We use the phrase ``Better Than" to indicate ``at least as good as."	Additional notation will be introduced as needed.
	
	\section{Developments and Properties of $\alsoxt$ }
	\label{alsox_property}
	In this section, we present equivalent formulations of CCP \eqref{eq_ccp}, derive its hinge-loss approximation, show its connection to the $\alsox$, and we also derive two special cases of $\alsox$. 
	\subsection{Equivalent Formulations}
	\label{bp}
	The fact that $\Pr\{\tilde{\bm\xi} \colon g(\bm x,\tilde{\bm\xi})\leq 0\} = \E_\Pr[\I(g(\bm x,\tilde{\bm\xi})\leq 0)]$ inspires us to introduce a binary functional variable {$z(\cdot):\Xi\to\Re $ defined on $(\Omega,\F,\Pr)$}  
	to represent the indicator function {$\I(\cdot)$}. Thus, we have the following equivalent formulation of CCP \eqref{eq_ccp},
	\begin{align}
	\label{eq_bilinear_prop}
	v^*=\min _{\bm {x}\in \mathcal{X},{z({\cdot})}}\left\{ \bm{c}^\top \bm{x}\colon \I( g(\bm x,\tilde{\bm\xi})\leq 0)\geq z(\tilde{\rxi}), \E[{z}(\tilde{\rxi})]\geq 1-\varepsilon, z(\tilde{\rxi})\in\{ 0,1\} \right\},
	\end{align}
	where for the sake of simplicity, we suppose that all the random constraints are satisfied almost surely throughout the paper. 
	Next, we observe that CCP \eqref{eq_bilinear_prop} can be equivalently reformulated as a bilinear constrained program by introducing another auxiliary nonnegative functional variable {$s(\cdot):\Xi\to\Re $ defined on $(\Omega,\F,\Pr)$} 
	to denote uncertain constraint violations and {subsequently} replacing random constraint $\I( g(\bm x,\trxi)\leq0)\geq z(\trxi)$ by $\E[{z}(\trxi){s}(\trxi)]=0$. 
	
	\begin{restatable}{proposition}{alsoxformulation}\label{also_+_formulation_1} 
		The CCP \eqref{eq_ccp} can be viewed as the following equivalent form 
		\begin{align}
		\label{eq_bilinear}
		v^*=\min _{\bm {x}\in \mathcal{X}, {{z}(\cdot), {s}(\cdot)}} \left\{ \bm{c}^\top \bm{x}\colon g(\bm x,\tilde{\bm {\xi}})\leq s(\tilde{\rxi}), \E[{z}(\tilde{\rxi})]\geq 1-\varepsilon,{z}(\tilde{\rxi})\in [0,1], \E[{z}(\tilde{\rxi}){s}(\tilde{\rxi})]=0,{s}(\tilde{\rxi})\geq 0 \right\}.
		\end{align}
	\end{restatable}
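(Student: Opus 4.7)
The plan is to prove the equivalence by showing that both formulations have the same optimal value, which I would do by exhibiting a value-preserving correspondence between feasible solutions in each direction. This amounts to verifying that introducing the slack variable $s(\cdot)$ together with the bilinear complementarity condition $\E[z(\trxi)s(\trxi)]=0$ and relaxing $z(\trxi)$ from $\{0,1\}$ to $[0,1]$ does not change the feasible set projected onto $\bm{x}$.

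For the direction $v^*\geq$ (value of \eqref{eq_bilinear}), I would take an optimal $\bm{x}^*$ for CCP \eqref{eq_ccp} (which exists by Assumption~\ref{A_3}) and construct
\[
z(\trxi):=\I(g(\bm{x}^*,\trxi)\leq 0),\qquad s(\trxi):=\max\{g(\bm{x}^*,\trxi),0\}.
\]
Then $g(\bm{x}^*,\trxi)\leq s(\trxi)$ holds pointwise, $s(\trxi)\geq 0$, $z(\trxi)\in[0,1]$, $\E[z(\trxi)]=\Pr\{g(\bm{x}^*,\trxi)\leq 0\}\geq 1-\varepsilon$, and $z(\trxi)s(\trxi)=0$ pointwise by construction, so $\E[z(\trxi)s(\trxi)]=0$. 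Hence $(\bm{x}^*,z,s)$ is feasible for \eqref{eq_bilinear} with objective $\bm{c}^\top\bm{x}^*=v^*$.

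For the converse direction, I would take any feasible $(\bm{x},z,s)$ for \eqref{eq_bilinear} and show $\bm{x}$ is feasible for CCP \eqref{eq_ccp}. Since $z(\trxi)\geq 0$, $s(\trxi)\geq 0$ and $\E[z(\trxi)s(\trxi)]=0$, the product $z(\trxi)s(\trxi)$ vanishes almost surely. On the event $A:=\{\trxi:z(\trxi)>0\}$, this forces $s(\trxi)=0$ a.s., and therefore $g(\bm{x},\trxi)\leq s(\trxi)=0$ a.s.\ on $A$. Consequently
\[
\Pr\{g(\bm{x},\trxi)\leq 0\}\;\geq\;\Pr(A).
\]
The key observation — the step I expect to be the main subtlety — is that the LP-style bound $z(\trxi)\leq 1$ yields
\[
1-\varepsilon\;\leq\;\E[z(\trxi)]\;=\;\E[z(\trxi)\I(z(\trxi)>0)]\;\leq\;\Pr(A),
\]
so that the relaxation $z(\trxi)\in[0,1]$ is tight enough to preserve the chance constraint; this is what makes the bilinear reformulation more useful than a naive continuous relaxation. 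Chaining the two inequalities gives $\Pr\{g(\bm{x},\trxi)\leq 0\}\geq 1-\varepsilon$, so $\bm{x}$ is feasible for CCP \eqref{eq_ccp} with the same objective value, yielding the reverse inequality and completing the proof.

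Measurability of the constructed $z(\cdot)$ and $s(\cdot)$ is immediate from Assumption~\ref{A_1} since $g(\bm{x}^*,\cdot)$ is measurable, and no further regularity beyond what is already assumed is needed.
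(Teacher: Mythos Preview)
Your proof is correct and follows essentially the same approach as the paper's: both directions use the constructions $s(\trxi)=\max\{g(\bm{x}^*,\trxi),0\}$ and $z(\trxi)=\I(g(\bm{x}^*,\trxi)\leq 0)$ for one inequality, and for the reverse both rely on the observation that $z(\trxi)s(\trxi)=0$ a.s.\ together with $\E[z(\trxi)]\leq \Pr\{z(\trxi)>0\}$ (the paper phrases this via $\hat z^*:=\I\{z^*>0\}\geq z^*$). The only cosmetic difference is that the paper routes the argument through the intermediate binary-$z$ formulation \eqref{eq_bilinear_prop}, whereas you compare \eqref{eq_bilinear} directly to CCP \eqref{eq_ccp}.
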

	\proof
	See Appendix~\ref{proof_also_+_formulation_1}.
	\QEDA
	
	We remark that in the bilinear formulation \eqref{eq_bilinear}, the functional variable {$z(\cdot) $} can be either binary or continuous, and this property {is} useful for deriving $\alsox+$ in \Cref{ap}.
	
	Replacing the objective function with an
	auxiliary variable $t$, we can rewrite CCP \eqref{eq_bilinear} as 
	\begin{align}
	\label{alsox_aug}
	v^* =\min _{ \begin{subarray}{c}
		\bm {x}\in \mathcal{X},t,\\
		{{z}({\cdot}),s({\cdot})}
		\end{subarray} } \left\{t\colon g(\bm x,\tilde{\bm {\xi}})\leq s(\tilde{\rxi}),\E[{z}(\tilde{\rxi})]\geq 1-\varepsilon, \E[{z}(\tilde{\rxi}){s}(\tilde{\rxi})]=0,{z}(\tilde{\rxi})\in [0,1],{s}(\tilde{\rxi}) \geq 0,\bm{c}^\top \bm{x} \leq t\right\}.
	\end{align}
	Formulation \eqref{alsox_aug} implies that in a CCP, one can search the smallest possible $t$ such that the constraint system remains feasible. Thus, this motivates us to convert CCP \eqref{alsox_aug} into an equivalent simple bilevel optimization problem.
	\begin{restatable}{proposition}{alsoxformulationbi}\label{also_+_formulation_2} 
		CCP \eqref{alsox_aug} is equivalent to
		\begin{subequations}
			\label{alsox_bilnear}
			\begin{align}
			v^* =\min _{ {t}} \ & t,\label{alsox_bilneara}\\
			\text{s.t.}\ &\left(\bm x^*,{{s}^*(\cdot),{z}^*(\cdot)}\right)\in\argmin _{ \begin{subarray}{c}
				\bm {x}\in \mathcal{X}, \\
				{{z}({\cdot})}\in [0,1],\\
				{{s}({\cdot})} \geq 0
				\end{subarray} }\left\{\E[{z}(\tilde{\rxi}){s}(\tilde{\rxi})]\colon g(\bm x,\tilde{\bm {\xi}})\leq s(\tilde{\rxi}),\E[{z}(\tilde{\rxi})]\geq 1-\varepsilon, \bm{c}^\top \bm{x} \leq t\right\},\label{alsox_bilnearb}\\
			&\Pr\left\{\tilde{\rxi}\colon g(\bm x^*,\tilde{\bm\xi})\leq 0\right\} \geq 1-\varepsilon.\label{alsox_bilnearc} 
			\end{align}
		\end{subequations}
	\end{restatable}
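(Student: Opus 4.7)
The plan is to establish that problems \eqref{alsox_aug} and \eqref{alsox_bilnear} share the common optimal value $v^*$ by proving the two inequalities separately, using \Cref{also_+_formulation_1} as the main ingredient. The key observation is that, since $z(\cdot) \in [0,1]$ and $s(\cdot) \geq 0$, the lower-level objective $\E[z(\trxi)\, s(\trxi)]$ is nonnegative, and it vanishes precisely when $z(\trxi)\, s(\trxi) = 0$ almost surely; this is the mechanism that turns the lower-level problem into a surrogate feasibility check for the chance constraint encoded in \eqref{alsox_bilnearc}.

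For showing that the optimal value of \eqref{alsox_bilnear} is at most $v^*$, I would take an optimal solution $(\bm x^\circ, v^*, z^\circ(\cdot), s^\circ(\cdot))$ of \eqref{alsox_aug}---which exists by \Cref{also_+_formulation_1} applied to \eqref{eq_bilinear}---and plug in $t = v^*$. The triple $(\bm x^\circ, z^\circ, s^\circ)$ is then feasible in the lower-level \eqref{alsox_bilnearb} and attains $\E[z^\circ s^\circ] = 0$, matching the lower bound above, so it is a lower-level minimizer. Reusing the argument from the proof of \Cref{also_+_formulation_1}, on the set $A = \{\trxi : z^\circ(\trxi) > 0\}$ one has $\Pr(A) \geq \E[z^\circ(\trxi)] \geq 1-\varepsilon$ since $z^\circ \leq 1$, and $z^\circ s^\circ = 0$ a.s.\ forces $s^\circ = 0$ on $A$, hence $g(\bm x^\circ, \trxi) \leq 0$ on $A$. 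Thus $\bm x^\circ$ satisfies \eqref{alsox_bilnearc}, and so $t = v^*$ is bilevel-feasible, giving the desired inequality.

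For the reverse direction, I would start from any feasible $(\hat t, \bm x^*, z^*(\cdot), s^*(\cdot))$ of \eqref{alsox_bilnear}: the lower-level constraints yield $\bm x^* \in \X$ and $\bm c^\top \bm x^* \leq \hat t$, while \eqref{alsox_bilnearc} gives $\Pr\{g(\bm x^*, \trxi) \leq 0\} \geq 1-\varepsilon$. Mimicking the explicit construction used in \Cref{also_+_formulation_1}, setting $\tilde z(\trxi) = \I(g(\bm x^*, \trxi) \leq 0)$ and $\tilde s(\trxi) = (g(\bm x^*, \trxi))_+$ produces a feasible point $(\bm x^*, \hat t, \tilde z, \tilde s)$ of \eqref{alsox_aug} with objective value $\hat t$, so $v^* \leq \hat t$; minimizing over $\hat t$ completes the equivalence. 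The main subtlety---and the part deserving the most care---is the handling of the ``$\in \operatorname{argmin}$'' upper-level constraint: one must check both that the lower-level at $t = v^*$ actually achieves objective $0$ (so that $0$ is indeed its optimum) and that every lower-level optimizer inherits the chance constraint in its $\bm x$-component. Both facts are already encoded in the proof of \Cref{also_+_formulation_1}, so no new technique is required; only the alignment of the bilevel structure with the nonnegativity of $\E[z s]$ needs to be tracked carefully.
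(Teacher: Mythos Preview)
Your proposal is correct and follows essentially the same approach as the paper: both directions use the nonnegativity of $\E[z(\trxi)s(\trxi)]$ together with the explicit construction $\tilde z(\trxi)=\I(g(\bm x^*,\trxi)\le 0)$, $\tilde s(\trxi)=(g(\bm x^*,\trxi))_+$ from \Cref{also_+_formulation_1} to pass between the augmented CCP and the bilevel formulation. Your concluding remark that every lower-level optimizer at $t=v^*$ automatically satisfies \eqref{alsox_bilnearc} is in fact correct (it follows from $\E[zs]=0$ forcing $zs=0$ a.s.\ and $\Pr\{z>0\}\ge\E[z]\ge 1-\varepsilon$), and is slightly more explicit than the paper's terse verification of this direction.
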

	\proof
	See Appendix~\ref{proof_also_+_formulation_2}.
	\QEDA
	
	In the bilevel optimization Formulation \eqref{alsox_bilnear}, the problem defined in \eqref{alsox_bilneara} and \eqref{alsox_bilnearc} is known as an upper-level (or leader's) problem with decision variable $t$, and the one appearing in \eqref{alsox_bilnearb} can be regarded as the lower-level (or follower's) problem with decision variables $\bm{x}$, {${s}(\cdot)$}, and  {${z}(\cdot)$}. Given the value of upper-level decision $t$, we can solve the lower-level problem \eqref{alsox_bilnearb} and then check whether the solution satisfies condition \eqref{alsox_bilnearc} or not --- if the answer is YES, we can reduce the value of $t$; otherwise, we have to increase $t$. 
	Besides, the optimal value of the lower-level problem \eqref{alsox_bilnearb} is monotone nonincreasing with respect to $t$. Therefore, if the lower-level problem \eqref{alsox_bilnearb} were easy to solve, it would be trivial to find the optimal value of CCP \eqref{alsox_bilnear}. For example, the binary search can be used to find the optimal upper-level decision $t^*$. However, {for a given $t$, the lower-level problem \eqref{alsox_bilnearb} is
		a two-stage bilinear program, which is known to be challenging to solve. In fact, solving the original CCP \eqref{eq_ccp} is polynomial-time reducible to problem \eqref{alsox_bilnearb} since the binary search can take a polynomial number of iterations in the size of the problem.} Note that solving a CCP is NP-hard as shown in \cite{luedtke2010integer}, so is the lower-level problem \eqref{alsox_bilnearb}.
	
	\subsection{What is $\alsoxt$?}\label{sec_alsox_sub}
	As discussed in the previous subsection, the lower-level problem \eqref{alsox_bilnearb} can be challenging to solve. Thus, in this subsection, we study its hinge-loss approximation, which is much easier to handle 
	in many cases. 
	Particularly,  {instead of solving a difficult bilinear program, we adopt a simple strategy by letting} the functional variable  ${z}(\rxi)=1$ in the lower-level problem \eqref{alsox_bilnearb}, i.e., the following form
	\begin{align*}
	&v^A(t):=\min _{\bm {x}\in \mathcal{X},{ {s}({\cdot})} \geq 0}\left\{\E[{s}(\tilde{\rxi})]\colon g(\bm x,\tilde{\bm {\xi}})\leq s({\tilde{\bxi}}),\bm{c}^\top \bm{x} \leq t\right\}.
	\end{align*}
	Next, projecting out continuous variable {${s}(\cdot)$}, we arrive at the following hinge-loss approximation
	\begin{align}
	\label{eq_project_out}
	v^A(t)=\min_{\bm {x}\in \mathcal{X}} & \left\{\E[g(\bm x,\tilde{\bm {\xi}})_+]\colon \bm{c}^\top \bm{x} \leq t \right\}.
	\end{align}
	The objective {function} in \eqref{eq_project_out}, {termed ``the hinge-loss function,''} can be viewed as an expectation of the nonnegative part of the random function $g(\bm x,\tilde{\bm {\xi}})$,  {which has been} widely {applied} in machine learning methods such as SVM (see, e.g., \citealt{suykens1999least}) and LASSO (see, e.g., \citealt{tibshirani1996regression}).  The goal of the hinge-loss approximation \eqref{eq_project_out} is to minimize the expectation of infeasibilities given the upper-level problem's decision $t$. 
	
	The proposed $\alsox$ is to replace the lower-level problem \eqref{alsox_bilnearb} by the hinge-loss approximation \eqref{eq_project_out}, which
	admits the following form 
	\begin{subequations}
		\label{eq_also_x_2}
		\begin{align}
		v^A =\min _{ {t}}\quad & t,\label{eq_also_x_2a}\\
		\text{s.t.}\quad&\left(\bm x^*,{{s}^*(\cdot)}\right)\in\argmin _{\bm {x}\in \mathcal{X}, {{s}({\cdot})} \geq 0}\left\{\E[{s}(\tilde{\rxi})]\colon g(\bm x,\tilde{\bm {\xi}})\leq s(\tilde{\rxi}),\bm{c}^\top \bm{x} \leq t\right\},\label{eq_also_x_2b}\\
		& \Pr\left\{\tilde{\bm\xi} : s^*(\tilde{\rxi})=0\right\} \geq 1-\varepsilon.\label{eq_also_x_2c} 
		\end{align}
	\end{subequations}
	Letting $(\bm x^*,{s^*(\cdot)})$ denote an optimal solution of the hinge-loss approximation \eqref{eq_also_x_2b}, if its optimal solution $\bm x^*$ is feasible to CCP \eqref{eq_ccp}, then we have $\Pr\{\tilde{\bm\xi} : g(\bm x^*,\tilde{\bm\xi})\leq 0 \}\geq1-\varepsilon$, which is equivalent to
	$\Pr\{\tilde{\bm\xi} : s^*(\tilde{\rxi})=0\} \geq1-\varepsilon$. {Increasing $t$ in \eqref{eq_also_x_2b} would drive down the optimal $s^*(\cdot)$ and hence make \eqref{eq_also_x_2c} more likely to be satisfied.} This ensures that 
	\begin{proposition}
		\label{alsox_feasible}
		{The proposed $\alsox$ \eqref{eq_also_x_2} is a convex approximation of CCP \eqref{eq_ccp}}, i.e., $v^A\geq v^*$. 
	\end{proposition}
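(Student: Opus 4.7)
The plan is to verify directly that any $t$ that is feasible for the upper-level problem in \eqref{eq_also_x_2} yields a solution $\bm x^\ast$ that is feasible for the original CCP \eqref{eq_ccp}, so that $v^A \geq v^\ast$ follows from the relation $\bm c^\top \bm x^\ast \leq t$. The whole argument therefore hinges on re-interpreting condition \eqref{eq_also_x_2c} as the chance constraint.

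First, I would project out the functional variable $s(\cdot)$ in the lower-level problem \eqref{eq_also_x_2b}. For any fixed $\bm x \in \mathcal X$, the constraints $s(\tilde\rxi)\geq 0$ and $s(\tilde\rxi) \geq g(\bm x,\tilde\rxi)$ are pointwise, and the objective $\E[s(\tilde\rxi)]$ is pointwise monotone, so the problem separates across realizations and is minimized almost surely by $s^\ast(\tilde\rxi)=g(\bm x,\tilde\rxi)_+$; this is exactly the projection used to obtain \eqref{eq_project_out}. Measurability of $s^\ast(\cdot)$ is inherited from Assumption~\ref{A_1}. Consequently, the event $\{\tilde\rxi : s^\ast(\tilde\rxi)=0\}$ coincides up to a null set with $\{\tilde\rxi : g(\bm x^\ast,\tilde\rxi)\leq 0\}$, so condition \eqref{eq_also_x_2c} is equivalent to $\Pr\{\tilde\rxi : g(\bm x^\ast,\tilde\rxi)\leq 0\}\geq 1-\varepsilon$, and together with $\bm x^\ast \in \mathcal X$ this shows that $\bm x^\ast$ is feasible to CCP \eqref{eq_ccp}. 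Combined with $\bm c^\top \bm x^\ast \leq t$, taking the infimum over feasible $t$ gives $v^A \geq \bm c^\top \bm x^\ast \geq v^\ast$.

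Second, I would briefly check that $\alsox$ is well posed, i.e., the set of upper-level feasible $t$'s in \eqref{eq_also_x_2} is nonempty. By Assumption~\ref{A_3}, CCP \eqref{eq_ccp} admits a feasible $\bar{\bm x}$; taking $t=\bm c^\top \bar{\bm x}$ and plugging $\bar{\bm x}$ together with $s(\tilde\rxi)=g(\bar{\bm x},\tilde\rxi)_+$ into \eqref{eq_also_x_2b} shows that $s^\ast(\tilde\rxi)=0$ on an event of probability at least $1-\varepsilon$, so \eqref{eq_also_x_2c} is satisfied. Finally, convexity of the hinge-loss lower-level problem follows from convexity of $g(\cdot,\bm\xi)$ (Assumption~\ref{A_1}), convexity of $\mathcal X$ (Assumption~\ref{A_2}), and preservation of convexity under expectation.

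I expect essentially no obstacle beyond bookkeeping: the only subtlety is ensuring that the pointwise projection $s^\ast(\tilde\rxi)=g(\bm x^\ast,\tilde\rxi)_+$ is a bona fide optimal functional variable rather than a heuristic substitution, which is why I would invoke the lower semicontinuity in Assumption~\ref{A_1} to guarantee measurability, and the closedness of the CCP feasible region (Proposition~\ref{prop_lower_continuous_closed}) to argue attainment of $v^A$.
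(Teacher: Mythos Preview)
Your first paragraph is correct and is essentially the paper's own (brief, inline) justification: at optimality in \eqref{eq_also_x_2b} one has $s^\ast(\tilde\rxi)=g(\bm x^\ast,\tilde\rxi)_+$, so condition \eqref{eq_also_x_2c} is exactly the chance constraint for $\bm x^\ast$, and every upper-level feasible $t$ satisfies $t\geq \bm c^\top\bm x^\ast\geq v^\ast$.

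Your second paragraph, however, contains a genuine error. You argue that taking $t=\bm c^\top\bar{\bm x}$ with $\bar{\bm x}$ CCP-feasible makes \eqref{eq_also_x_2c} hold, because plugging $\bar{\bm x}$ and $s(\tilde\rxi)=g(\bar{\bm x},\tilde\rxi)_+$ into \eqref{eq_also_x_2b} gives $s=0$ with probability at least $1-\varepsilon$. But condition \eqref{eq_also_x_2c} is imposed on the \emph{optimal} pair $(\bm x^\ast,s^\ast(\cdot))$ of the lower-level problem, and $\bm x^\ast$ need not equal $\bar{\bm x}$: a different minimizer of the hinge loss can have strictly smaller $\E[s(\tilde\rxi)]$ yet fail the chance constraint. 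The paper in fact exhibits this phenomenon in Example~\ref{alsox_infeasible_example}, where the CCP is feasible but $\alsox$ \eqref{eq_also_x_2} is infeasible. Fortunately this does not jeopardize the proposition: if the upper-level feasible set in \eqref{eq_also_x_2} is empty then $v^A=+\infty\geq v^\ast$ trivially. So your well-posedness check is both unnecessary for the stated inequality and incorrect as written; simply drop it.
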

	{In general, it is difficult to quantify the difference between $v^A$ and $ v^*$, and similar to many conservative approximation methods, under some extreme cases, $\alsox$ \eqref{eq_also_x_2} may be infeasible (please see \Cref{alsox_infeasible_example} in Appendix~\ref{sec_append_example2}).} {However, in the next section, we {show} that for the finite-support covering CCPs, $v^A$ is within a factor (greater than one) of $v^*$ and this factor is tight.}  
	
	Correspondingly, we develop the $\alsox$ \Cref{alg_alsox}, which generalizes the heuristic algorithm in section 6.1 \citep{ahmed2017nonanticipative}.
	Particularly, for a given value $t$ of the upper-level problem, we solve the hinge-loss approximation \eqref{eq_also_x_2b} with an optimal solution $(\bm x^*,{s^*(\cdot)})$ and check whether $\bm x^*$ is feasible to CCP \eqref{eq_ccp} or not, i.e., check if $\Pr\{\tilde{\bm\xi} \colon s^*(\tilde{\rxi})=0\} \geq1-\varepsilon$ or not.
	If the answer is YES, we {reduce} the value of $t$. Otherwise, increase it. In the implementation, we search the optimal $t$ by using the binary search method {with a proper stopping tolerance $\delta_1$ (e.g., we choose $\delta_1=10^{-2}$ in numerical study)}, which is detailed in \Cref{alg_alsox}.
	
	\begin{algorithm}[htbp]
		\caption{The Proposed $\alsox$ Algorithm}
		\label{alg_alsox}
		\begin{algorithmic}[1]
			\State \textbf{Input:} Let $\delta_1$ denote the stopping tolerance parameter, $t_L$ and $t_U$ be the known lower and upper bounds of the optimal value of CCP \eqref{eq_ccp}, respectively 
			\While {$t_U-t_L>\delta_1$}
			\State Let $t=(t_L+t_U)/2$ and {$(\bm{x}^*,{s}^*(\cdot))$} be an optimal solution of the hinge-loss approximation \eqref{eq_also_x_2b}
			\State Let $t_L=t$ if $\Pr\{\tilde{\bm\xi} \colon{{s}^*}(\tilde{\rxi})>0\} >\varepsilon$; otherwise, $t_U=t$
			\EndWhile
			\State \textbf{Output:} A feasible solution $\x^*$ {and its objective value $\bar v^A$} to CCP \eqref{eq_ccp} 
		\end{algorithmic}
	\end{algorithm}
	
	We make the following remarks about $\alsox$ \Cref{alg_alsox}.
	\begin{enumerate} [label=(\roman*)]
		\item   At Step 4 in  \Cref{alg_alsox}, we let $t_L=t$ if the solution $(\bm{x}^*,{s}^*(\cdot))$ is infeasible to the CCP \eqref{eq_ccp}, i.e., $\Pr\{\tilde{\bm\xi} \colon{{s}^*}(\tilde{\rxi})>0\} >\varepsilon$; otherwise, we decrease the current upper bound of $t$ by letting $t_U=t$;
		\item For the linear CCPs, i.e., $g_i(\bm x,{\rxi}) ={\rxi}^\top\bm a_i(\bm x) -b_i(\bm x)$ with affine functions $\bm a_i(\bm x),b_i(\bm x)$ and set $\mathcal{X}$ is a polyhedron, we can { use  parametric} linear programming techniques (see, e.g., \citealt{adler1992geometric}) to decrease the number of bisection needed. That is, at each iteration, if the current solution is not feasible to CCP \eqref{eq_ccp}, then we update $t$ to be the upper bound of its allowable range; otherwise, we let $t$ be equal to the lower bound of the allowable range. Then, we continue the binary search procedure;
		\item {Note that if the probability distribution $\Pr$ is finite-support or the probability distribution is elliptical and CCP \eqref{eq_ccp} solely involves a single linear chance constraint, then the hinge-loss approximation \eqref{eq_also_x_2b} can be efficiently solvable under mild conditions (see \Cref{polynomial_support_tract} and \Cref{el_tract} in Appendix~\ref{alsox_tractable}).} Otherwise, one can first use the sampling average approximation results from \cite{luedtke2008sample} to tightly approximate a CCP to the one with finite-support and then solve the $\alsox$ \eqref{eq_also_x_2};
		\item For nonlinear chance constraints, we can use subgradient descent algorithm to solve the hinge-loss approximation \eqref{eq_project_out}, which {is} detailed in the next subsection;
		\item At each iteration, we can warm-start the process with the solution found in the previous iteration; 
		\item High-quality upper and lower bounds can help reduce the number of iterations needed. For instance, we can use the quantile bound proposed in \cite{ahmed2017nonanticipative,song2014chance} as a promising lower bound $t_L$, and use the objective value from $\CVaR$ approximation or other heuristics as a potential upper bound $t_U$; and
		\item {	As long as the encoding length of $t_U$, $t_L$, and $\delta_1$ are polynomial in the input size of CCP \eqref{eq_ccp}, the number of bisection iterations needed in $\alsox$ \Cref{alg_alsox} is proportional to $\log((t_U-t_L)/\delta_1)$, i.e., polynomial in the input size of CCP \eqref{eq_ccp} as well.}
	\end{enumerate}
	
	\subsection{Subgradient Descent (SD) Algorithm }
	\label{sdm}
	Note that the hinge-loss approximation \eqref{eq_project_out} is a convex minimization problem and can be efficiently solved by the Subgradient Descent (SD) algorithm if the underlying distribution is finite-support. For ease of notation, we first define the feasibility set of $\bm{x}$ as $\mathcal{S}:=\{ \bm{x}\colon\bm {x}\in \mathcal{X} \cap \bm{c}^\top \bm{x} \leq t \}$. The SD method proceeds as follows: (i) Given a solution $\bm x\in \mathcal{S}$, we first find its subgradient; and (ii) then project the difference of this solution and scaled subgradient descent direction into set $\mathcal{S}$, which generates a new solution. We continue this process until invoking a stopping criterion. The detailed SD method for solving the hinge-loss approximation \eqref{eq_project_out} can be found in \Cref{alg_SD}.
	
	\begin{algorithm}[htbp]
		\caption{Subgradient Descent (SD) Algorithm to Solve the Hinge-loss Approximation \eqref{eq_project_out}}
		\label{alg_SD}
		\begin{algorithmic}[1]
			\State 	Find an initial feasible solution $\bm x_0$ such that $\bm x_0 \in \mathcal{S}: = \{ \bm{x}\colon\bm {x}\in \mathcal{X} \cap \bm{c}^\top \bm{x} \leq t \}$ {and let $k=0$}
			\Do
			\State {At iteration $k$, compute subgradient $\hat h_{\bm x_k}$ for $\bm{x}_{k}$, i.e., $\hat h_{\bm x_k} = \partial _{\bm x_k}\E[g(\bm x_k,\tilde{\bm {\xi}})_+]$}
			\State {Update $\bm{x}_{k+1}$ by $\mathrm{\Pi}_{\mathcal{S}}(\bm{x}_{k}-\gamma_k \hat h_{\bm x_k})$, where $\gamma_k$ is the step size}
			\State {$k=k+1$}
			\doWhile{{Invoking} a stopping criterion} 
		\end{algorithmic}
	\end{algorithm}
	
	We make the following remarks about SD \Cref{alg_SD}.
	\begin{enumerate} [label=(\roman*)]
		\item According to Assumption \ref{A_1} and theorem 1 in \cite{rockafellar1982interchange}, 
		we can interchange the subdifferential operator and expectation when updating $\bm x_k$ at $k$th iteration. That is, { $\bm{x}_{k+1}= \mathrm{\Pi}_{\mathcal{S}}(\bm{x}_{k}-\gamma_k \E[\partial_{\bm x_k}g(\bm x_k,\tilde{\bm {\xi}})_+])$};
		\item The proposed \Cref{alg_SD} works well when the underlying probability is finite-support and the random function $g_i(\bm x,\tilde{\bm {\xi}})$ is convex and nonlinear. For the continuous probability distribution $\Pr$, one can approximate it with the one of finite support, according to the sampling average approximation results in \cite{luedtke2008sample}, or one can use the stochastic subgradient descent method \citep{Nemirovski2009robust};  
		\item At {Step 4} in \Cref{alg_SD}, the projection onto the feasible set $\mathcal{S}$ can be solved efficiently by adopting Dykstra's projection algorithm \citep{boyle1986method}, i.e., projecting onto set $\mathcal{X}$ and the set induced by the knapsack constraint (i.e., set $\{\bm x\in \Re^n: \bm{c}^\top \bm x\leq t\}$) alternatively \citep{tavakoli2016coupled}; and 
		{	\item In the numerical study, we select the step size at iteration $k$ as $\gamma_k=1/(k+1)$, and its corresponding convergence rate is $\O(1/\mathrm{\log}(T))$. Other step size choices and convergence rate results can be found in chapter 3 \citep{nesterov2003introductory}.}
	\end{enumerate}
	
	
	\subsection{Special Cases}
	\label{alsox_sp}
	In this subsection, we discuss two special cases under which the hinge-loss approximation \eqref{eq_project_out} can be computed efficiently: (i) under discrete support; or (ii) if $I=1$, and function $g(\bm x,{\rxi})$ is biaffine in $\bm x$ and ${\rxi}$ and the random vector $\tilde{\rxi}$ follows an elliptical distribution.
	
	\noindent\textbf{Special Case 1: Discrete Support}\\
	If the underlying probability distribution is finite-support with $N$ equiprobable scenarios, i.e., the random vector $\tilde{\rxi}$ has a finite support $\Xi = \{\bm \xi^1,\dots, \bm \xi^N\}$ with $ \Pr\{\tilde{\rxi}=\bm\xi^i\}=1/N$ for all $i\in [N]$, then CCP \eqref{eq_ccp} reduces to 
	\begin{align*}
	v^* = \min_{\bm {x}\in \mathcal{X}}\left\{\bm{c}^\top \bm{x} \colon \sum_{i\in [N]}\I(g(\bm x,\bm\xi^i)\leq 0) \geq N-\floor{N\varepsilon}\right\},
	\end{align*}
	and $\alsox$ \eqref{eq_also_x_2} admits the following form
	\begin{align}
	v^A =\min _{ {t}}\quad & t,\nonumber\\
	\text{s.t.}\quad&(\bm x^*,\bm s^*)\in\argmin_{\bm {x}\in \mathcal{X},\bm{s}\geq \bm{0}}\left\{\frac{1}{N} \sum_{i\in[N]}s_i\colon \bm{c}^\top \bm{x} \leq t, g(\bm x,\bm\xi^i)\leq s_i, \forall i\in [N] \right\},\label{eq_also_x_discrete}\\
	& \sum_{i\in[N]}	\I({s^*_i}=0) \geq N-\floor{N\varepsilon}.\nonumber
	\end{align}
	Note that for this special case, the condition that $\Pr\{\tilde{\bm\xi} \colon {s}^*(\tilde{\rxi})=0\} \geq 1-\varepsilon$ in $\alsox$ \eqref{eq_also_x_2c} reduces to that $\sum_{i\in[N]}	\I({s^*_i}=0) \geq N-\floor{N\varepsilon}$, where the left-hand side of the inequality is equal to the support size of $\bm{s}^*$ (i.e., $|\mathrm{supp}(\bm{s}^*)|$). Thus, if the support size of current solution $\bm{s}^*$, or equivalently, the number of violations is strictly larger than $\floor{N\varepsilon}$, then the current solution $\bm x^*$ is not feasible to CCP \eqref{eq_ccp}.  {The $\alsox$ \eqref{eq_also_x_discrete} can be generalized to the case when the probability mass is not uniform (see \Cref{alsox_discrete_tractable} in Appendix~\ref{alsox_tractable} for the detailed formulation). 
	}


	\noindent\textbf{Special Case 2: Elliptical Distributions}\\
	{Elliptical distributions have been widely used in risk management (see, e.g., \citealt{landsman2003tail, jaworski2010copula,embrechts2002correlation,kamdem2005value}). For instance, \citealt{kamdem2005value} used elliptical distributions to model portfolio risk factors.  An elliptical distribution $\Pr_{\mathrm{E}}(\bm{\mu},\bm{\mathrm{{\Sigma}}},\hat g)$ is described by three parameters, a location parameter $\bm{\mu}$, a positive definite matrix $\bm{\mathrm{{\Sigma}}}$, and a generating function $\hat g$. The name of an elliptical distribution is based on the fact that the contours of its density are ellipsoids in $\bm x\in\Re^n$, and therefore, its probability density function $\hat{f}$ is defined as 
		\begin{equation*}
		\hat{f}(\bm{x})=k\cdot \hat g\left(\frac{1}{2}(\bm{x}-\bm{\mu})^\top\bm{\mathrm{{\Sigma}}}^{-1}(\bm{x}-\bm{\mu})\right) 
		\end{equation*}
		with a positive normalization scalar $k$.\\
		The probability density function of the standard univariate elliptical distribution $\Pr_{\mathrm{E}}(0,1,\hat g)$ is $\varphi(z)=k\hat g(z^2/2)$, and the corresponding cumulative distribution function is $\mathrm{\Phi}(a)=\int_{-\infty}^ak\hat g(z^2/2)dz$. For the single linear CCP (1), i.e., $I = 1$ and $g(\bm x,{{\rxi}}) =  \rxi^\top \bm a_1(\bm x)-b_1(\bm x)$ with affine functions $\bm a_1(\bm x)$, $ b_1(\bm x)$, if the random parameters $\tilde{\bm {\xi}}$ follow a joint elliptical distribution with $\tilde{\bm {\xi}} \thicksim \Pr_{\mathrm{E}}(\bm{\mu},\bm{\mathrm{{\Sigma}}},\hat g)$, the objective function in the hinge-loss approximation \eqref{eq_project_out} can be much simplified.}
	In this special case, CCP \eqref{eq_ccp} reduces to the following conic program {(see, e.g., problem 2.1 in \citealt{kataoka1963stochastic} or theorem 3 in \citealt{prekopa1974programming})}
	\begin{align}
	v^*=	\min_{\bm{x}\in \mathcal{X}} \left\{\bm{c}^\top \bm{x} \colon b_1(\bm x)-\bm{\mu}^\top\bm{a}_1(\bm {x})\geq \mathrm{\Phi}^{-1}(1-\varepsilon) \sqrt{\bm{a}_1(\bm{x})^\top\bm{\mathrm{{\Sigma}}}\bm{a}_1(\bm {x})}\right\}. \label{ccp_el}
	\end{align}
	This notable simplification \eqref{ccp_el} {is useful to show the exactness of $\alsox$ \eqref{eq_also_x_2}.} The following proposition shows an equivalent reformulation of $\alsox$ \eqref{eq_also_x_2} .
	\begin{restatable}{proposition}{hingelosstruncatedel}\label{hinge_loss_truncated_el} 
		For any elliptical distribution {$\Pr_{\mathrm{E}}(\bm{\mu},\bm{\mathrm{{\Sigma}}},\hat g)$}, $\alsox$ \eqref{eq_also_x_2} corresponding to the single linear CCP admits the following form
		\begin{subequations}
			\label{eq_alsox_el}
			\begin{align}
			v^A =& \min _{ {t}}\quad t,\label{eq_alsox_ela}\\
			&	\begin{aligned}
			\text{s.t.}\quad(\bm x^*,\alpha^*)\in\argmin_{\bm{x}\in \mathcal{X},\alpha } \biggl\{ & \sqrt{\bm{a}_1(\bm{x})^\top\bm{\mathrm{{\Sigma}}}\bm{a}_1(\bm{x})} \left(\overline{G}(\alpha^2/2)-\alpha+\alpha\mathrm{\Phi}(\alpha) \right)\colon \\
			&\bm{c}^\top \bm{x} \leq t, \frac{b_1(\bm x)-\bm\mu^\top \bm{a}_1(\bm {x})}{\sqrt{\bm{a}_1(\bm{x})^\top\bm{\mathrm{{\Sigma}}}\bm{a}_1(\bm {x})}} =\alpha \biggr\},\label{eq_alsox_elb}
			\end{aligned}\\
			&\ \quad b_1(\bm x^*)-\bm{\mu}^\top\bm{a}_1(\bm {x}^*)\geq \mathrm{\Phi}^{-1}(1-\varepsilon) \sqrt{\bm{a}_1(\bm{x}^*)^\top\bm{\mathrm{{\Sigma}}}\bm{a}_1(\bm {x}^*)},\label{eq_alsox_elc} 
			\end{align}
		\end{subequations}
		where  {$\overline{G}(\tau)=G(\infty)-G(\tau)$ and $G(\tau)=k\int_0^\tau\hat g(z)dz$}. By default, we let $\frac{0}{0}=0$ and $\frac{c}{0}=\sign(c)\infty$ if $c\neq 0$.
	\end{restatable}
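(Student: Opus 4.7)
The plan is to reduce the hinge-loss expectation $\E[g(\bm x,\tilde{\bm\xi})_+]$ appearing in \eqref{eq_project_out} to a one-dimensional integral against the standard univariate elliptical density, and then to identify that integral with the stated closed form. First, by the affine closure property of elliptical distributions, since $\tilde{\bm\xi}\sim \Pr_{\mathrm E}(\bm\mu,\bm{\mathrm{\Sigma}},\hat g)$ the scalar $\tilde{\bm\xi}^\top\bm a_1(\bm x)$ is again univariate elliptical with location $\bm\mu^\top\bm a_1(\bm x)$ and scale $\sigma(\bm x):=\sqrt{\bm a_1(\bm x)^\top\bm{\mathrm{\Sigma}}\bm a_1(\bm x)}$. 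I would standardize via $Y:=(\tilde{\bm\xi}^\top\bm a_1(\bm x)-\bm\mu^\top\bm a_1(\bm x))/\sigma(\bm x)$, so that $Y\sim \Pr_{\mathrm E}(0,1,\hat g)$ with density $\varphi(z)=k\hat g(z^2/2)$ and CDF $\Phi$, write
\begin{equation*}
g(\bm x,\tilde{\bm\xi})=\sigma(\bm x)\bigl(Y-\alpha(\bm x)\bigr),\qquad \alpha(\bm x):=\frac{b_1(\bm x)-\bm\mu^\top\bm a_1(\bm x)}{\sigma(\bm x)},
\end{equation*}
and pull the nonnegative scalar $\sigma(\bm x)$ out of the positive part to obtain $\E[g(\bm x,\tilde{\bm\xi})_+]=\sigma(\bm x)\,\E[(Y-\alpha(\bm x))_+]$.

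Second, I would expand
\begin{equation*}
\E[(Y-\alpha)_+]=\int_\alpha^\infty y\,\varphi(y)\,dy-\alpha\bigl(1-\Phi(\alpha)\bigr)
\end{equation*}
and evaluate the remaining $y$-integral through the substitution $u=y^2/2$. For $\alpha\ge 0$ this is immediate and yields $k\int_{\alpha^2/2}^\infty\hat g(u)\,du=\overline G(\alpha^2/2)$. For $\alpha<0$ the substitution is non-monotone across $y=0$, so I would split the integral at $0$ and use the symmetry $\varphi(-z)=\varphi(z)$ of elliptical densities to compute $\int_\alpha^0 y\varphi(y)\,dy=-G(\alpha^2/2)$; combined with $\int_0^\infty y\varphi(y)\,dy=G(\infty)$, this again produces $\overline G(\alpha^2/2)=G(\infty)-G(\alpha^2/2)$. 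Substituting back gives
\begin{equation*}
\E[g(\bm x,\tilde{\bm\xi})_+]=\sigma(\bm x)\bigl(\overline G(\alpha(\bm x)^2/2)-\alpha(\bm x)+\alpha(\bm x)\Phi(\alpha(\bm x))\bigr),
\end{equation*}
which is precisely the lower-level objective in \eqref{eq_alsox_elb}.

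Third, I would plug this closed form into the hinge-loss approximation \eqref{eq_project_out} inside $\alsox$ \eqref{eq_also_x_2}, introduce the free scalar $\alpha$ coupled to $\bm x$ through $\alpha\,\sigma(\bm x)=b_1(\bm x)-\bm\mu^\top\bm a_1(\bm x)$, and translate the feasibility check \eqref{eq_also_x_2c} via the deterministic reformulation \eqref{ccp_el} of the single linear elliptical CCP into the inequality \eqref{eq_alsox_elc}; this assembles \eqref{eq_alsox_el}. Finally, the degenerate case $\sigma(\bm x)=0$ forces $\bm a_1(\bm x)=\bm 0$ (because $\bm{\mathrm{\Sigma}}\succ \bm 0$), in which case $g(\bm x,\tilde{\bm\xi})=-b_1(\bm x)$ is deterministic and the conventions $0/0=0$ and $c/0=\sign(c)\infty$, together with $\overline G(\infty)=0$, consistently recover $\E[g(\bm x,\tilde{\bm\xi})_+]=0$ whenever $b_1(\bm x)\ge\bm\mu^\top\bm a_1(\bm x)$; the opposite case is excluded at any optimal solution by \eqref{eq_alsox_elc}.

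The main technical subtlety is the change of variables in $\int_\alpha^\infty y\,\varphi(y)\,dy$ when $\alpha<0$, since $u=y^2/2$ is non-monotone across $y=0$. Leveraging the symmetry of elliptical densities to absorb the negative-$y$ portion into $-G(\alpha^2/2)$ is the key algebraic observation that makes the resulting formula uniform in the sign of $\alpha$; everything else is bookkeeping.
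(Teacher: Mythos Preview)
Your proof is correct and reaches the same closed form, but it takes a different route from the paper. The paper's argument writes $\E[g(\bm x,\tilde{\bm\xi})_+]$ as the product of the tail probability $1-\Phi(\alpha)$ and the conditional mean $\E[\tilde{\bm\xi}^\top\bm a_1(\bm x)-b_1(\bm x)\mid \tilde{\bm\xi}^\top\bm a_1(\bm x)\ge b_1(\bm x)]$, and then invokes a known closed-form for the truncated mean of an elliptical distribution (theorem~1 in \citealt{landsman2003tail}) to obtain the $\overline G(\alpha^2/2)$ term; the final simplification is identical to yours. You instead compute $\int_\alpha^\infty y\,\varphi(y)\,dy$ directly via the substitution $u=y^2/2$, splitting at $y=0$ and using the evenness of $\varphi$ when $\alpha<0$. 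Your route is more self-contained and makes the uniformity in the sign of $\alpha$ explicit, which is useful downstream in the proof of \Cref{exact_el}; the paper's route is shorter but outsources the key integral to the literature. Your treatment of the degenerate case $\sigma(\bm x)=0$ is also more careful than the paper's, which simply adopts the $0/0$ and $c/0$ conventions without commentary.
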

	\proof
	See Appendix~\ref{proof_hinge_loss_truncated_el}.
	\QEDA
	
	Note that one might need to project out variable $\alpha$ to ensure the convexity of the objective function in the lower-level problem \eqref{eq_alsox_elb}. The purpose of introducing variable $\alpha$ is to simplify the formula and is convenient to prove the monotonicity of the objective function, which {is} elaborated in the next {section}.
	
	The following result shows under a Gaussian distribution, the objective function of the hinge-loss approximation \eqref{eq_project_out} can be further simplified according to Proposition \ref{hinge_loss_truncated_el}.
	\begin{corollary}\label{cor_truncated_nor}
		When $\tilde{\bm {\xi}}$ follows Gaussian distribution (i.e., a special elliptical distribution with {$\hat g(\mu)=e^{-\mu}$} and $k=(\sqrt{2\pi})^{-1}$), the hinge-loss approximation \eqref{eq_alsox_elb} corresponding to the single linear CCP reduces to
		\begin{equation}
		\label{eq_alsox_normal}
		v^A(t) =\min_{ \bm x\in \mathcal{X},\alpha } \left\{ \sqrt{\bm{a}_1(\bm{x})^\top\bm{\mathrm{{\Sigma}}}\bm{a}_1(\bm{x})} \left(\varphi(\alpha)-\alpha+\alpha\mathrm{\Phi}(\alpha) \right) \colon \bm{c}^\top \bm{x} \leq t,\frac{b_1(\bm x)-\bm\mu^\top \bm{a}_1(\bm {x})}{\sqrt{\bm{a}_1(\bm{x})^\top\bm{\mathrm{{\Sigma}}}\bm{a}_1(\bm {x})}} =\alpha \right\}.
		\end{equation} 
	\end{corollary}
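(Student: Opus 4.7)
The plan is to prove Corollary 1 as a direct specialization of Proposition 3 by plugging in the generating function of the Gaussian distribution and simplifying the quantity $\overline{G}(\alpha^2/2)$ that appears in the objective of \eqref{eq_alsox_elb}. Since Proposition 3 already establishes the full elliptical form, no fresh optimization argument is required here; the entire task reduces to an analytic identification.

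First, I would instantiate the Gaussian case by recording that $\hat g(\mu) = e^{-\mu}$ and $k = (\sqrt{2\pi})^{-1}$, so the standard univariate elliptical density becomes $\varphi(z) = k\, \hat g(z^2/2) = \frac{1}{\sqrt{2\pi}} e^{-z^2/2}$, i.e., the standard normal density, and $\mathrm{\Phi}$ reduces to the standard normal CDF. Second, I would compute
\begin{equation*}
G(\tau) = k\int_0^\tau \hat g(z)\,dz = \frac{1}{\sqrt{2\pi}}\int_0^\tau e^{-z}\,dz = \frac{1}{\sqrt{2\pi}}\bigl(1 - e^{-\tau}\bigr),
\end{equation*}
so $G(\infty) = 1/\sqrt{2\pi}$ and therefore $\overline{G}(\tau) = G(\infty) - G(\tau) = \frac{1}{\sqrt{2\pi}} e^{-\tau}$. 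Evaluating at $\tau = \alpha^2/2$ yields $\overline{G}(\alpha^2/2) = \frac{1}{\sqrt{2\pi}} e^{-\alpha^2/2} = \varphi(\alpha)$.

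Third, I would plug this identity into the lower-level objective of \eqref{eq_alsox_elb}, replacing $\overline{G}(\alpha^2/2) - \alpha + \alpha \mathrm{\Phi}(\alpha)$ by $\varphi(\alpha) - \alpha + \alpha \mathrm{\Phi}(\alpha)$, and observe that the constraints $\bm c^\top \bm x \leq t$ and $(b_1(\bm x) - \bm\mu^\top \bm a_1(\bm x))/\sqrt{\bm a_1(\bm x)^\top \bm{\mathrm{\Sigma}} \bm a_1(\bm x)} = \alpha$ are unchanged. Since Corollary 1 states only the form of the lower-level (hinge-loss) problem, the upper-level minimization over $t$ and the feasibility check \eqref{eq_alsox_elc} play no role in the derivation and can be omitted from the statement. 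The ``conventions'' $0/0 = 0$ and $c/0 = \sign(c)\infty$ carry over verbatim, needing no additional justification.

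I do not anticipate a genuine obstacle here; the only minor point to state cleanly is the identification $\varphi(\alpha) = \overline{G}(\alpha^2/2)$, together with the remark that $\mathrm{\Phi}$ in the Gaussian case is indeed the standard normal CDF used in Proposition 3. Both are immediate from the explicit generating function $\hat g$ and the normalization constant $k$, so the proof is essentially a one-line substitution after these elementary computations.
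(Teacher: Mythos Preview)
Your proposal is correct and is exactly the intended specialization: the paper states this corollary immediately after Proposition~\ref{hinge_loss_truncated_el} without a separate proof, so the content is precisely the substitution $\hat g(\mu)=e^{-\mu}$, $k=(\sqrt{2\pi})^{-1}$ and the resulting identity $\overline{G}(\alpha^2/2)=\varphi(\alpha)$ that you carry out.
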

	Finally, we re-stress that \Cref{hinge_loss_truncated_el} and \Cref{cor_truncated_nor} {are} useful to derive the exactness of $\alsox$.
	
	\section{Strengths of $\alsoxt$ }
	\label{alsox_strengths}
	In this section, we present three strengths of $\alsox$. First, we demonstrate $\alsox$ \eqref{eq_also_x_2} always outperforms $\CVaR$ approximation under Assumptions~\ref{A_1} and \ref{A_2}. Next, we show sufficient conditions under which $\alsox$ \eqref{eq_also_x_2} returns an optimal solution to CCP \eqref{eq_ccp}. Finally, we provide a provable performance guarantee when applying the proposed $\alsox$ \eqref{eq_also_x_2} to solve the covering CCPs.
	
	\subsection{A Comparison between $\alsoxt$ and $\CVaRt$ approximation}
	\label{alsox_is_better}
	Given a random variable $\tilde{\bm{X}}$, let $\Pr$ and $F_{\tilde{\bm{X}}}(\cdot)$ be its probability distribution and cumulative distribution function, respectively. For a given risk level $\varepsilon$, $(1-\varepsilon)-$Value at risk ($\VaR$) of $\tilde{\bm{X}}$ is 
	\begin{align*}
	\VaR_{1-\varepsilon}(\tilde{\bm{X}}):=\min_s \left\{ s: F_{\tilde{\bm{X}}}(s)\geq 1-\varepsilon\right\},
	\end{align*}
	and the corresponding conditional value-at-risk ($\CVaR$) is defined as 
	\begin{align*}
	\CVaR_{1-\varepsilon}(\tilde{\bm{X}}):=\min_\beta \left\{ \beta+\frac{1}{\varepsilon}\E_{\Pr}[ \tilde{\bm{X}}-\beta ]_+\right\}.
	\end{align*}
	According to \cite{nemirovski2007convex}, the $\CVaR$ approximation of CCP \eqref{eq_ccp} can be written as
	\begin{subequations}
		\begin{equation}
		\label{ccp_cvar}
		v^{\CVaR}=\min_{\bm x\in \mathcal{X}} \left\{ \bm{c}^\top\bm{x}\colon \min_{\beta\leq 0}\left\{\beta+\frac{1}{\varepsilon} \E\{ g(\bm x,\tilde{\rxi})-\beta\}_+\right\}\leq 0 \right\}.
		\end{equation}
		Letting $s({\rxi}):=\max\{g(\bm x,{\bm {\xi}}),\beta\}$ and linearizing it, the $\CVaR$ approximation \eqref{ccp_cvar} is equivalent to 
		\begin{align}
		\label{eq_cvar}
		v^{\CVaR}=\min_{{\bm x\in \mathcal{X}},\beta\leq 0,{s(\cdot)} } \left\{ \bm{c}^\top \bm{x} \colon g(\bm x,\tilde{\bm {\xi}})\leq s(\tilde{\rxi}), \E[{s}(\tilde{\rxi})]-(1-\varepsilon)\beta \leq 0, s(\tilde{\rxi})\geq \beta\right\}.
		\end{align}
		Next, augmenting the objective function by adding an auxiliary variable $t$,	the $\CVaR$ approximation \eqref{eq_cvar} can be further formulated as
		\begin{align}
		\label{eq_cvar_eq}
		v^{\CVaR}=\min_{{\bm x\in \mathcal{X}},\beta\leq 0,{s(\cdot)} ,t} \left\{ t \colon g(\bm x,\tilde{\bm {\xi}})\leq s(\tilde{\rxi}), \E[{s}(\tilde{\rxi})]-(1-\varepsilon)\beta \leq 0, s(\tilde{\rxi})\geq \beta, \bm{c}^\top \bm{x}\leq t\right\}.
		\end{align}
	\end{subequations}
	Similar to $\alsox$ \eqref{eq_also_x_2}, in the $\CVaR$ approximation \eqref{eq_cvar_eq}, one can search the smallest possible $t$ such that the constraint $\E[s(\tilde{\rxi})]-(1-\varepsilon)\beta$ remains feasible. Thus, the $\CVaR$ approximation \eqref{eq_cvar_eq} can be rewritten as the following simple bilevel program
	\begin{subequations}
		\label{cvar_bilevel}
		\begin{align}
		v^{\CVaR} & =\min _{ {t}}\ t,\label{cvar_bilevela}\\
		\text{s.t.}\quad&\left(\bm x^*, {s^*(\cdot)},\beta^*\right)\in\argmin _{\bm {x}\in \mathcal{X},\beta\leq 0, {s(\cdot)} }\left\{\E[{s}(\tilde{\rxi})] - (1-\varepsilon)\beta\colon g(\bm x,\tilde{\bm {\xi}})\leq s(\tilde{\rxi}),s(\tilde{\rxi})\geq \beta, \bm{c}^\top \bm{x} \leq t\right\},\label{eq_cvar_t}\\
		&\E[{s}^*(\trxi)]-(1-\varepsilon)\beta^*\leq 0.\label{cvar_bilevelc} 
		\end{align}
	\end{subequations}
	Above, for any given $t$, let $ ({\bm{x}^*},{{s}^*(\cdot)},\beta^*)$ be an optimal solution of the lower-level problem \eqref{eq_cvar_t} with an optimal objective value $	v^{\CVaR}(t)$, { where
		\begin{align*}
		v^{\CVaR}(t) = \min_{\bm {x}\in \mathcal{X},\beta\leq 0, s(\cdot) }\left\{\E[{s}(\tilde{\rxi})] - (1-\varepsilon)\beta\colon g(\bm x,\tilde{\bm {\xi}})\leq s(\tilde{\rxi}),s(\tilde{\rxi})\geq \beta, \bm{c}^\top \bm{x} \leq t\right\}.
		\end{align*}
		If $v^{\CVaR}(t)>0$, then constraint \eqref{cvar_bilevelc} would be violated and $ ({\bm{x}^*},{{s}^*(\cdot)},\beta^*, t)$ would be infeasible to the $\CVaR$ approximation \eqref{eq_cvar_eq}. As a result, we must have $t< v^\CVaR$. Otherwise, we would have $t\geq  v^\CVaR$.}
	
	Notice that letting the variable $\beta=0$ in the lower-level problem \eqref{eq_cvar_t} recovers the hinge-loss approximation \eqref{eq_also_x_2b}. This observation motivates us to compare $\CVaR$ approximation \eqref{cvar_bilevel} and $\alsox$ \eqref{eq_also_x_2}; namely, for a given $t$, if an optimal solution of the lower-level problem \eqref{eq_also_x_2b} violates the chance constraint \eqref{eq_also_x_2c}, so does the $\CVaR$ approximation. In fact, we can prove that the optimal values of both lower-level problems coincide under this premise. Therefore, the $\alsox$ \eqref{eq_also_x_2} outperforms the $\CVaR$ approximation \eqref{cvar_bilevel}, since the feasibility-checking condition (i.e., constraint \eqref{cvar_bilevelc}) of the upper-level problem in the $\CVaR$ approximation \eqref{cvar_bilevel} is more restrictive.
	
	\begin{restatable}{theorem}{alsoxbettercvarccp}\label{the1} 
		Let $v^A$, $v^{\CVaR}$ denote the optimal value of the $\alsox$ \eqref{eq_also_x_2} and the $\CVaR$ approximation \eqref{cvar_bilevel}, respectively. Then, under Assumptions \ref{A_1}-\ref{A_2}, we must have $v^A\leq v^{\CVaR}$.
	\end{restatable}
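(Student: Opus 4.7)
The plan is to establish the theorem by comparing the Lagrangian duals of the lower-level programs in the bilevel formulations \eqref{eq_also_x_2} of $\alsox$ and \eqref{cvar_bilevel} of $\CVaR$. Trivially, fixing $\beta = 0$ in the $\CVaR$ lower-level \eqref{eq_cvar_t} collapses it into the $\alsox$ lower-level \eqref{eq_also_x_2b}, so $v^{\CVaR}(t) \leq v^A(t)$ for every $t$. The substantive step, hinted right before the theorem, is to show the reverse inequality $v^{\CVaR}(t) \geq v^A(t)$ in the regime where the $\alsox$ lower-level optimizer $\bm x^*$ violates \eqref{eq_also_x_2c}, i.e., $\Pr\{g(\bm x^*, \tilde{\bm\xi}) > 0\} > \varepsilon$ (call this the ``premise''). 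Combined with $v^A(t) = \E[(g(\bm x^*, \tilde{\bm\xi}))_+] > 0$ forced by the premise, this equality yields $v^{\CVaR}(t) > 0$, so that \eqref{cvar_bilevelc} is violated whenever \eqref{eq_also_x_2c} is. Contrapositively, the $\CVaR$ acceptance region is a subset of the $\alsox$ one, yielding $v^A \leq v^{\CVaR}$.

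To prove $v^{\CVaR}(t) \geq v^A(t)$ under the premise, I would Lagrangify both lower-level programs using functional multipliers $\lambda_1(\cdot) \geq 0$ for $g(\bm x, \tilde{\bm\xi}) \leq s(\tilde{\bm\xi})$ and $\lambda_0(\cdot) \geq 0$ for $s(\tilde{\bm\xi}) \geq 0$ in \eqref{eq_also_x_2b} (and additionally $\lambda_2(\cdot) \geq 0$ for $s(\tilde{\bm\xi}) \geq \beta$ and $\nu \geq 0$ for $\beta \leq 0$ in \eqref{eq_cvar_t}), plus $\mu \geq 0$ for $\bm c^\top \bm x \leq t$. Eliminating $s(\cdot)$ via stationarity gives $\lambda_1 + \lambda_0 = 1$ (for $\alsox$) or $\lambda_1 + \lambda_2 = 1$ (for $\CVaR$) almost surely, pinning $\lambda_1(\tilde{\bm\xi}) \in [0,1]$ a.s.\ in both cases. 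Eliminating $\beta$ in the $\CVaR$ dual further gives $\E[\lambda_2(\tilde{\bm\xi})] + \nu = 1 - \varepsilon$ with $\nu \geq 0$, which is equivalent to the single scalar constraint $\E[\lambda_1(\tilde{\bm\xi})] \geq \varepsilon$. The remaining $\bm x$ and $\mu$ pieces of the two duals are identical. Thus the $\CVaR$ dual is exactly the $\alsox$ dual augmented by $\E[\lambda_1] \geq \varepsilon$, so weak duality already yields $v^{\CVaR}(t) \leq v^A(t)$. Now, at the $\alsox$ primal optimum we have $s^*(\tilde{\bm\xi}) = (g(\bm x^*, \tilde{\bm\xi}))_+$, and complementary slackness combined with $\lambda_1 + \lambda_0 = 1$ pins $\lambda_1^*(\tilde{\bm\xi}) = 1$ on the event $\{g(\bm x^*, \tilde{\bm\xi}) > 0\}$; hence under the premise $\E[\lambda_1^*(\tilde{\bm\xi})] \geq \Pr\{g(\bm x^*, \tilde{\bm\xi}) > 0\} > \varepsilon$, so $\lambda_1^*$ is feasible for the $\CVaR$ dual and attains the same objective value. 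Strong duality under Assumptions~\ref{A_1}--\ref{A_2} then promotes this to the required equality $v^{\CVaR}(t) = v^A(t)$.

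The main technical obstacle lies in the functional setting for the infinite-dimensional variable $s(\cdot)$: the interchange of expectation with the $\bm x$-subdifferential relies on Theorem~1 of \citealt{rockafellar1982interchange} (already invoked in the paper for the subgradient descent step), and a Slater-type constraint qualification needed for strong duality is supplied by Assumption~\ref{A_3} together with nonemptiness of the $\alsox$ lower-level feasible set. An exposition-friendly alternative is to first establish the result in the finite-support case of Section~\ref{alsox_sp}, where both duals reduce to standard finite-dimensional LP duals and the added constraint $\E[\lambda_1] \geq \varepsilon$ is transparent, and then extend to general distributions via the sample-average approximation argument of \citealt{luedtke2008sample}.
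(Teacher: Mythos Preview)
Your proposal is correct and follows essentially the same route as the paper's proof: both Lagrangify the two lower-level problems, identify from stationarity in $s(\cdot)$ that the dual weight on $g\leq s$ lies in $[0,1]$, observe that the $\CVaR$ dual differs from the $\alsox$ dual only by the extra scalar constraint $\E[\lambda_1]\geq\varepsilon$ (the paper writes this as $\E[\mu]\leq 1-\varepsilon$ with $\mu=1-\lambda_1$), and then use complementary slackness under the premise $\Pr\{g(\bm x^*,\tilde{\bm\xi})>0\}>\varepsilon$ to show the $\alsox$ dual optimum automatically satisfies this constraint strictly, so the two duals coincide. One small slip: the paper's strong-duality step invokes only Assumptions~\ref{A_1}--\ref{A_2} (convexity and lower semi-continuity give the relaxed Slater condition directly), not~\ref{A_3}; and your proposed finite-support plus SAA detour is unnecessary since the paper works directly in the functional setting.
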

	\proof
	See Appendix~\ref{proof_the1}.
	\QEDA
	
	Interested readers are referred to \Cref{example_alsox_cvar} in Appendix~\ref{sec_append_example} for an illustration of the correctness of \Cref{the1}.
	
	{We note that according to the stopping criterion in  \Cref{alg_alsox}, the output objective value $\bar{v}^{A}$ might not be equal to $v^A$ and at most $\delta_1$ larger than $v^A$, the optimal value of $\alsox$. According to \Cref{the1}, we must have $v^A\leq v^\CVaR$. Thus, the output objective value $\bar v^A$ from  \Cref{alg_alsox} is no larger than $v^\CVaR+\delta_1$. This result is summarized below.}
	\begin{corollary}\label{cor_alsox_algorithm_alsox}
		Under Assumptions \ref{A_1}-\ref{A_2}, $\alsox$ \Cref{alg_alsox} returns a feasible solution with 
		{$\bar v^A\leq v^\CVaR+\delta_1 $}, where {$\bar v^A$ is the output objective value} and $\delta_1 $ is the chosen stopping tolerance parameter in \Cref{alg_alsox}.
	\end{corollary}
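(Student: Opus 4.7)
The plan is to combine Theorem~\ref{the1} with an elementary bookkeeping argument for the bisection loop of \Cref{alg_alsox}. The idea is to first show that the algorithm's output value $\bar v^A$ satisfies $\bar v^A\leq v^A+\delta_1$, and then invoke Theorem~\ref{the1} to replace $v^A$ by $v^\CVaR$.

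To prove $\bar v^A\leq v^A+\delta_1$, I would maintain the invariant $t_L\leq v^A\leq t_U$ throughout the while-loop. Initially $t_L\leq v^*\leq v^A$ by the input assumption, and one takes the initial $t_U$ large enough to be an upper bound on $v^A$ as well, e.g.\ $t_U=v^\CVaR$, which by Theorem~\ref{the1} is $\geq v^A$ and, by the argument establishing Theorem~\ref{the1}, also makes the hinge-loss lower-level problem \eqref{eq_also_x_2b} chance-feasible at $t_U$. The invariant is preserved by Step~4: when $t_U$ is decreased to the midpoint $t$, this is triggered precisely when the solution $(\bm x^*,s^*(\cdot))$ of \eqref{eq_also_x_2b} satisfies \eqref{eq_also_x_2c}, and by the bilevel characterization \eqref{eq_also_x_2} this certifies $v^A\leq t$; when $t_L$ is increased to $t$, the globally-optimal lower-level solution fails \eqref{eq_also_x_2c}, which by the same characterization gives $v^A>t$.

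Once the loop terminates, $t_U-t_L\leq\delta_1$ and the returned $\bm x^*$ is the feasible solution corresponding to the current $t_U$, so $\bar v^A=\bm c^\top\bm x^*\leq t_U\leq t_L+\delta_1\leq v^A+\delta_1$. Applying Theorem~\ref{the1} completes the argument: $\bar v^A\leq v^A+\delta_1\leq v^\CVaR+\delta_1$.

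The only delicate step is the $t_L\leq v^A$ direction of the invariant: it relies on \eqref{eq_also_x_2b} being solved to global optimality at each iteration so that observing infeasibility of the specific solution returned truly implies no chance-feasible selection in the argmin at that $t$. This is the standing premise under which \Cref{alg_alsox} is meaningful rather than a substantive obstacle, so I would state it explicitly at the beginning of the argument and carry it through. The rest is arithmetic: chain the bound $\bar v^A\leq t_U$ with the termination condition and with Theorem~\ref{the1}.
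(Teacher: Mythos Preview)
Your proposal is correct and follows exactly the paper's approach: the paper's entire justification is the short paragraph immediately preceding the corollary, which asserts that the stopping criterion yields $\bar v^A\leq v^A+\delta_1$ and then invokes Theorem~\ref{the1} to conclude $\bar v^A\leq v^{\CVaR}+\delta_1$. You simply make the bisection invariant $t_L\leq v^A\leq t_U$ explicit, which the paper leaves implicit; your acknowledgment of the tie-breaking subtlety (that infeasibility of the particular optimizer returned must witness infeasibility of the whole argmin) is also glossed over in the paper and is handled the same way---as a standing premise of the algorithm rather than a step to be proved.
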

	{It is worth noting that using a proper stopping tolerance parameter, $\alsox$ \Cref{alg_alsox} in general returns a better solution than the $\CVaR$ approximation; however, since the binary search procedure requires solving many similar hinge-loss approximations, it might be slower than the $\CVaR$ approximation. Our numerical study shows that for the linear CCP, the computational time of $\alsox$ \Cref{alg_alsox} is usually longer than the $\CVaR$ approximation since the off-the-shelf solvers excel in solving large-scale linear programs, while for the nonlinear CCP, $\alsox$ \Cref{alg_alsox} takes a shorter time than the CVaR approximation using the subgradient method. Nevertheless, in both cases, the solution quality of $\alsox$ \Cref{alg_alsox} is found to be consistently better than that of the $\CVaR$ approximation.} 
	
	Notably, the convexity assumption of set $\mathcal{X}$ in Assumption \ref{A_2} is of a necessity to the result of \Cref{the1}. In the non-convex setting, the $\alsox$ \eqref{eq_also_x_2} cannot be guaranteed to be better than the $\CVaR$ approximation \eqref{cvar_bilevel}. Particularly,  {the following two examples show that the $\alsox$ can return a better solution than the $\CVaR$ approximation and vice versa when set $\X$ is nonconvex. }
	
	\begin{example}
		\rm
		\label{alsox_better_nonconvex}
		Let us revisit \Cref{example_alsox_cvar} in Appendix~\ref{sec_append_example} with an additional restriction that $x$ is an integer, i.e, set $\mathcal{X}=\mathbb{Z}_+$. In this case, 
		we have $v^*=2$, $v^A=2$, and $v^{\CVaR}=3$. 	Thus, the $\alsox$ outperforms the $\CVaR$ approximation. 	\QEDB	
	\end{example}
	\begin{subequations}
		\begin{example}
			\label{cvar_better_nonconvex}
			\rm
			Consider a CCP with 4 equiprobable scenarios (i.e., $N=4$, $\Pr\{\tilde{\rxi}=\bm\xi^i\}=1/N$), risk level $\varepsilon=1/2$, set $\mathcal{X}=\{0,1\}$, function $g(\bm{x},{\rxi})={\xi}_1 x - {\xi}_2$, 
			$\xi_1^1=-49$, $\xi_1^2=\xi_1^3=\xi_1^4=101$, $\xi_2^1=-50$, and $\xi_2^2=\xi_2^3=\xi_2^4=99$. Under this setting, CCP \eqref{eq_ccp} becomes
			\begin{equation*}
			v^*=\min_{x\in \{0,1\}}\left\{-x\colon 	 \I(49x\geq 50)+\I(101x\leq 99)+\I( 101x\leq 99)+\I( 101x\leq 99)\geq 2 \right\}.
			\end{equation*}
			
			The $\CVaR$ approximation of this CCP is
			\begin{align*}
			v^\CVaR=\min_{x\in\{0,1\},\beta\leq 0,\bm s}\biggl\{-x\colon  {\begin{array}{l}
				\displaystyle	-49x+50\leq s_1,101x-99\leq s_2, 101x-99\leq s_3,\\ 
				\displaystyle101x-99\leq s_4, \frac{1}{4}\sum_{i\in[4]}s_i-\beta/2\leq 0, s_i\geq \beta,\forall i\in[4] 
				\end{array}}\biggr\}.
			\end{align*}
			By the simple calculation, the optimal value is $v^*=0$ with the optimal solution $x^*=0$, and $v^{\CVaR}=0$ for the $\CVaR$ approximation \eqref{eq_cvar} with the optimal solution $x^*=0, \beta^*=-123.5, s_1^*=50, s_2^*=s_3^*=s_4^*=-99$. However, the $\alsox$ \eqref{eq_also_x_2} of this example is infeasible, which can be formulated as
			{
				\begin{align*}
				&	v^A =\min _{ {t}}\, \biggl\{t\colon  \sum_{i\in[4]}	\I({s^*_i}=0) \geq 2,\\
				&(x^*,\bm s^*)\in\argmin_{\begin{subarray}{c}
					x\in \{0,1\}, \bm s\geq \bm 0\end{subarray}}\biggl\{ \frac{1}{4} \sum_{i\in[4]}s_i\colon \begin{array}{l}
				\displaystyle-49x+50\leq s_1, 101x-99\leq s_2, \\
				\displaystyle 101x-99\leq s_3,101x-99\leq s_4,-x\leq t \end{array}\biggr\}
				\biggr\}.
				\end{align*}}
			Particularly,	for any $t\geq-1$, the hinge-loss approximation returns a solution with $s_1^*=1$, $s_2^*=s_3^*=s_4^*=2$, $ x^*=1$, and the support size of $\bm{s}^*$ is greater than $2$, then we have to increase the objective bound $t$ to the infinity. Therefore, in this example, the $\CVaR$ approximation always returns the optimal solution, but the $\alsox$ {fails to} find any feasible solution.
			\QEDB		
		\end{example}
	\end{subequations}

	\subsection{Exactness of $\alsoxt$ }
	\label{alsox_exactness}
	In this subsection, we show sufficient conditions under which $\alsox$ \eqref{eq_also_x_2} can provide an exact optimal solution to CCP \eqref{eq_ccp}.  {To begin with, the following example shows that $\alsox$ \eqref{eq_also_x_2} may not be able to find the exact solution of CCP \eqref{eq_ccp}  even under Assumptions \ref{A_1}-\ref{A_2}. Thus, in this subsection, we explore the conditions under which the $\alsox$ \eqref{eq_also_x_2}  returns an exact optimal solution.}
	
	\begin{example}
		\label{am_dc_example}
		\rm
		Consider a CCP with 3 equiprobable scenarios (i.e., $N=3$, $\Pr\{\tilde{\rxi}=\bm\xi^i\}=1/N$), risk level $\varepsilon=1/3$, set $\mathcal{X}=\Re_+^2$, function $g(\bm{x},{\rxi})=-{\rxi}^\top \bm{x} +1$, and 
		$\bm\xi^1 =(2,3)^\top$, $\bm\xi^2 =(2,1)^\top$, $\bm\xi^3 =(1,2)^\top $. 
		The optimal value of this CCP can be found by solving the following mixed-integer linear program
		\begin{equation*}
		v^*=	\min_{\bm{x}\in\Re_+^2,\bm{z}\in \{0,1\}^3}\left\{ x_1+x_2\colon 2x_1+3x_2\geq z_1,2x_1+x_2\geq z_2,x_1+2x_2\geq z_3,\sum_{i\in[3]}z_i\geq 2 \right\},
		\end{equation*}	
		i.e., $v^*=0.5$. \\
		The corresponding $\alsox$ \eqref{eq_also_x_2} is
		{
			\begin{align*}
			&v^A =\min _{ {t}}\,\biggl\{ t\colon \sum_{i\in[3]}	\I({s^*_i}=0) \geq 2,\\
			&
			( \bm x^*,\bm s^*)\in\argmin_{\bm{x}\in\Re^2_+,\bm{s}\in\Re^3_+}\biggl\{\frac{1}{3} \sum_{i\in[3]}s_i\colon  
			\begin{array}{l}
			\displaystyle 2x_1+3x_2\geq 1-s_1,2x_1+x_2\geq 1-s_2, \\
			\displaystyle x_1+2x_2\geq 1-s_3, x_1+x_2\leq t  \end{array}
			\biggr\} \biggr\}.
			\end{align*}}
		Simple calculations show that $\alsox$ has an optimal $v^{A}=2/3>v^*$.
		\QEDB
	\end{example}
	
	{\Cref{am_dc_example} motivates us to find the special cases of CCP \eqref{eq_ccp} under which $\alsox$ \eqref{eq_also_x_2}  can provide an
		optimal solution.}
	
	
	\noindent\textbf{Special Case I of Exactness: CCPs with Equality Constraint}\\
	{This special case of CCP \eqref{eq_ccp} consists of a linear uncertain equality constraint. This special case is motivated by the following two distinct applications:
		\begin{itemize}
			\item 
			The first one is to find a feasible subsystem of linear equalities studied by \cite{amaldi1995complexity}. That is, given a possibly infeasible linear system $\bm A\bm x=\bm b$ with matrix $\bm A \in\Re^{m\times n}$ and vector $\bm{b}\in \Re^m$, and a positive integer $K\in [m]$, the goal of the problem is to seek a solution $\bm x\in\Re^n$ such that $\bm x$ satisfies at least $K$ linear equalities of the system; and
			\item The second one is to find a sparse solution from the linear system studied by \cite{nemirovski2001lectures}. That is, given a linear system $\bm A\bm x=\bm b$ with matrix $\bm A \in\Re^{m\times n}$ and vector $\bm{b}\in \Re^m$ and an integer $K\in [n]$, the goal of the problem is to seek a sparse solution  $\bm x$ such that the support size of $\bm x$ is no larger than $K$.
	\end{itemize}}{In this special case, we assume set $\X=\{\bm x\in \Re^n:\bm U^\top \bm x=\bm h\}$ with matrix $\bm U\in \Re^{m\times n}$ and vector $\bm h\in \Re^{n}$},  
	$g_1(\bm x,{{\rxi}}) ={ {\rxi}}^\top (\bm A\bm{x})+ a( {\rxi})-\bm b^\top\bm x$, and $g_2(\bm x, {\rxi}) = - {\rxi}^\top (\bm A\bm{x})- a( {\rxi})+\bm b^\top\bm x$, where $\bm A\in \Re^{m\times n},\bm b\in \Re^{n}$, and $a(\cdot):\Xi\rightarrow\Re$. Hence, CCP \eqref{eq_ccp} reduces to 
	\begin{align}
	v^* =\min_{\bm x\in \X} \left\{ \bm{c}^\top\bm x\colon\Pr\left\{\tilde{\bm\xi} \colon \tilde{\bm {\xi}}^{\top} (\bm A\bm{x}) + a(\tilde{\bm {\xi}})=\bm b^\top\bm x \right\} \geq 1-\varepsilon\right \},\label{eq_ccp_equal}
	\end{align}
	while the corresponding $\alsox$ \eqref{eq_also_x_2} can be written as
	\begin{subequations}\label{eq_alsox_equal}
		\begin{align}
		v^A =& \min _{ {t}}\quad  t,\\
		\text{s.t.}\quad&( \bm x^*, {s^*(\cdot)})\in\argmin _{\bm {x}\in \X,{s(\cdot)}} \left\{ \E\left[|{s}(\tilde{\rxi})|\right]\colon \tilde{\bm {\xi}}^{\top} (\bm A\bm{x})+ a(\tilde{\bm {\xi}})-\bm b^\top\bm x=s(\tilde{\rxi}),\bm{c}^\top \bm{x} = t \right\},\label{single_ccp_equality_1}\\
		&\Pr\left\{\tilde{\bm\xi} : s^*(\tilde{\rxi})=0\right\} \geq 1-\varepsilon.
		\end{align}
	\end{subequations}
	Above, we replace the inequality $\bm{c}^\top \bm{x} \leq t $ by the equality $\bm{c}^\top \bm{x} = t$ since at optimality, the equality must hold. 
	The sufficient condition under which ${\alsox}$ \eqref{eq_alsox_equal} returns an optimal solution of CCP \eqref{eq_ccp_equal} relies on the following property.
	\begin{quote}\it
		\textbf{Generalized Nullspace Property.} For any $(\bm{x},{s(\cdot)})$ such that ${\bm {\xi}}^{\top} (\bm A\bm{x})-\bm b^\top\bm x-s({\rxi})=0,\bm{c}^\top \bm{x} = 0, {\bm U^\top \bm x=\bm 0}, {s}({\rxi})\not=0$, then for any $\Pr-$measurable set $\mathcal{S} \subseteq\Xi $ such that $\Pr\{\tilde{\rxi}\colon\tilde{\rxi} \in \mathcal{S}\} \leq \varepsilon$, one must have $\E[|{s}(\tilde{\rxi})|\I(\tilde{\rxi} \in \mathcal{S})]<1/2\E[|{s}(\tilde{\rxi})|]$.
	\end{quote}
	
	\noindent It is worthy of mentioning that this generalized nullspace property extends the notion of the nullspace property for the sparse signal recovery (i.e., property 1.3.4 in \citealt{nemirovski2001lectures}), where the latter {is useful to characterize the uniqueness of the sparse solution satisfying a finite set of linear equations.}
	We also remark that if the probability distribution $\Pr$ consists of $N$ equiprobable scenarios, the generalized nullspace property can be simplified as
	\begin{quote}\it
		\textbf{Generalized Nullspace Property with $\boldsymbol{N}$ Equiprobable Scenarios.} For any $(\bm{x},\bm{s})\in \Re^n\times \Re^N$ such that ${{\bm \xi}^i}^\top(A\bm{x})-B\bm x-s_i=0$ for each $i\in [N]$, $\bm{c}^\top \bm{x} = 0, {\bm U^\top \bm x=\bm 0}, \bm{s}\not=\bm{0}$, then for any set $\mathcal{S} \subseteq [N] $ with $|\mathcal{S}|\leq \floor{\varepsilon N}$, we must have $\sum_{i\in\mathcal{S}}|s_i|<1/2\sum_{i\in[N]}|s_i|$.
	\end{quote}
	
	Next, we are ready to establish the exactness of $\alsox$ \eqref{eq_alsox_equal} for this special case, given that the generalized nullspace property holds. 
	\begin{restatable}{theorem}{exactnessec}\label{the4} 
		For Special Case 1, the following results must hold:
		\begin{enumerate}[label=(\roman*)]
			\item For any feasible pair of $(t, {a(\cdot)})$ under which the hinge-loss approximation \eqref{single_ccp_equality_1} has a feasible solution $(\bm{x},{s(\cdot)})$ satisfying $\Pr\{\tilde{\rxi} : s(\tilde{\rxi})=0\} \geq 1-\varepsilon$, 
			every optimal solution $(\bm{x}^*,{s^*(\cdot)})$ to the hinge-loss approximation \eqref{single_ccp_equality_1} shares the same ${s^*(\cdot)}$ and satisfies $\Pr\{\tilde{\bm\xi} : s^*(\tilde{\rxi})=0\} \geq 1-\varepsilon$, if and only if the generalized nullspace property holds; and
			\item Suppose that the generalized nullspace property holds. Then the optimal values of CCP \eqref{eq_ccp_equal} and $\alsox$ \eqref{eq_alsox_equal} coincide, i.e., $v^A=v^*$. Moreover, every optimal solution $(\bm{x}^*,{s^*(\cdot)})$ to the CCP \eqref{eq_ccp_equal} shares the same ${s^*(\cdot)}$.
		\end{enumerate}
	\end{restatable}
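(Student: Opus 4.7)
The plan is to establish Part~(i) via a direct optimality argument using the generalized nullspace property (GNP), and then deduce Part~(ii) by applying Part~(i) at $t=v^*$ together with a twin-set application of GNP.

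For the ``if'' direction of Part~(i), I would assume GNP holds and show that the given feasible $(\bar{\bm x},\bar s)$ satisfying the probability condition is in fact the \emph{unique} optimal solution of the hinge-loss program~\eqref{single_ccp_equality_1}. Take any other feasible $(\bm x,s)$ with $s\neq \bar s$, and set $\bm y=\bm x-\bar{\bm x}$, $\tilde s=s-\bar s$; then $\bm c^\top\bm y=0$, $\bm U^\top\bm y=\bm 0$, and $\bm\xi^\top(\bm A\bm y)-\bm b^\top\bm y=\tilde s\not\equiv 0$, so GNP applies with $\mathcal{S}:=\{\bar s\neq 0\}$ (of measure $\leq\varepsilon$). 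Splitting over $\mathcal{S}$ and $\mathcal{S}^c$, using $\bar s\equiv 0$ on $\mathcal{S}^c$ and the triangle inequality $|\bar s+\tilde s|\geq |\bar s|-|\tilde s|$ on $\mathcal{S}$, one obtains
\begin{align*}
\E[|s|]&=\E[|\bar s+\tilde s|\I(\mathcal{S})]+\E[|\tilde s|\I(\mathcal{S}^c)]\\
&\geq \E[|\bar s|]-\E[|\tilde s|\I(\mathcal{S})]+\E[|\tilde s|\I(\mathcal{S}^c)]>\E[|\bar s|],
\end{align*}
where the last strict inequality is exactly GNP. Hence $\bar s$ is the unique hinge-loss optimal $s^*$ and trivially satisfies the probability condition. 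The ``only if'' direction is contrapositive: given a triple $(\bm y,s_y,\mathcal{S})$ witnessing failure of GNP, set $\bm h=\bm 0$, $t=0$, and $a(\bm\xi):=-s_y(\bm\xi)\I(\bm\xi\in\mathcal{S})$; then $(\bm 0,a)$ is hinge-loss feasible with the probability condition, while $(\bm y,s_y+a)$ is feasible with objective $\E[|s_y|\I(\mathcal{S}^c)]\leq \E[|s_y|\I(\mathcal{S})]=\E[|a|]$, and its $s$ violates the probability condition (for $\varepsilon<1/2$ and $s_y\neq 0$ a.e.\ on $\mathcal{S}^c$). A small perturbation on $a(\cdot)$ breaks any tie and produces a hinge-loss program whose unique optimum $s^*$ differs from $\bar s=a$ and fails the probability condition.

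For Part~(ii), $v^A\geq v^*$ is \Cref{alsox_feasible}. For $v^A\leq v^*$, pick any CCP-optimal $\bm x^*$ and set $s^*(\bm\xi):=\bm\xi^\top(\bm A\bm x^*)+a(\bm\xi)-\bm b^\top\bm x^*$; at $t=v^*$ the pair $(\bm x^*,s^*)$ is hinge-loss feasible with the probability condition, so by Part~(i) every hinge-loss optimum at $t=v^*$ satisfies the probability condition, and \Cref{alg_alsox} accepts $t=v^*$, yielding $v^A\leq v^*$. For uniqueness of $s^*$ across CCP optima, suppose $\bm x_1^*,\bm x_2^*$ are two CCP optima with $\tilde s:=s_1^*-s_2^*\not\equiv 0$; each $s_i^*$ vanishes on $\mathcal{S}_i^c$ with $\Pr\{\mathcal{S}_i\}\leq\varepsilon$, so $\tilde s$ is supported on $\mathcal{S}_1\cup\mathcal{S}_2$. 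Applying GNP twice to $(\bm x_1^*-\bm x_2^*,\tilde s)$ with $\mathcal{S}=\mathcal{S}_1$ and $\mathcal{S}=\mathcal{S}_2$ and summing,
\begin{align*}
\E[|\tilde s|]=\E[|\tilde s|\I(\mathcal{S}_1\cup\mathcal{S}_2)]\leq \E[|\tilde s|\I(\mathcal{S}_1)]+\E[|\tilde s|\I(\mathcal{S}_2)]<\tfrac{1}{2}\E[|\tilde s|]+\tfrac{1}{2}\E[|\tilde s|]=\E[|\tilde s|],
\end{align*}
a contradiction.

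The main obstacle will be the ``only if'' direction of Part~(i): the constructed counterexample must be a legitimate instance of~\eqref{single_ccp_equality_1} whose unique optimum $s^*$ provably fails the probability condition, and the boundary case where GNP fails with equality (rather than strict inequality) requires an additional perturbation argument on $a(\cdot)$ to break the tie between $(\bm 0,a)$ and $(\bm y,s_y+a)$. The ``if'' direction, the $v^A=v^*$ step, and the twin-set GNP contradiction are by comparison routine once the right quantities are aligned.
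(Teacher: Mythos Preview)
Your overall strategy matches the paper's: the ``if'' direction of Part~(i) is exactly the paper's triangle-inequality argument (with the roles of $(\bm x,s)$ and $(\hat{\bm x},\hat s)$ swapped in presentation), and Part~(ii) at $t=v^*$ is the same reduction. Your twin-set GNP argument for uniqueness of $s^*(\cdot)$ among CCP optima is a clean, self-contained alternative; the paper instead simply invokes Part~(i), which already forces every feasible-with-probability-condition $s$ to coincide with the unique hinge-loss optimal $s^*$, so any two CCP optima automatically share the same $s^*$. Both routes are valid; yours is more direct, theirs is shorter.

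The one place where you diverge and make your life harder is the ``only if'' construction. You try to show that the \emph{alternate} solution $(\bm y,\,s_y\I(\mathcal{S}^c))$ \emph{fails} the probability condition, which forces you to assume $\varepsilon<1/2$ and $s_y\neq 0$ a.e.\ on $\mathcal{S}^c$ --- neither is given, and the perturbation idea does not obviously repair this. The paper avoids this entirely by arguing the other way round: it only observes that the probability-satisfying solution $(\hat{\bm x},\,\hat s\I(\mathcal{S}))$ is \emph{not the unique} hinge-loss optimum, since $(\bm 0,\,-\hat s\I(\mathcal{S}^c))$ is a different feasible point with objective $\E[|\hat s|\I(\mathcal{S}^c)]\leq \E[|\hat s|\I(\mathcal{S})]$. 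That directly contradicts the presumed conclusion (``every optimal solution shares the same $s^*$'') without ever needing the alternate solution to violate the probability condition, and without any side assumptions on $\varepsilon$ or the support of $s_y$. If you reframe your counterexample this way, the perturbation step becomes unnecessary and the argument goes through as in the paper.
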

	
	\proof
	See Appendix~\ref{proof_the4}.
	\QEDA
	
	
	The follow example illustrates the correctness of \Cref{the4}.
	\begin{example}
		\rm
		\label{exactness_eq_example}
		Consider a CCP with 3 equiprobable scenarios (i.e., $N=3$, $\Pr\{\tilde{\rxi}=\bm\xi^i\}=1/N$), risk level $\varepsilon=1/3$, set $\mathcal{X}=\Re^2$, function $g_1(\bm{x},{\rxi})=-{\rxi}^\top \bm{x} +1$ and $g_2(\bm{x},{\rxi})={\rxi}^\top \bm{x} -1$, 
		$\bm\xi^1 =(2,3)^\top$, $\bm\xi^2 =(2,1)^\top$, $\bm\xi^3 =(1,2)^\top$. Under this setting, the optimal solution of this CCP can be obtained by solving the following mixed-integer linear program
		\begin{align*}
		v^*=\min_{\bm{x}\in\Re^2}\left\{x_1+x_2\colon
		\I(2x_1+3x_2=1)+\I(2x_1+x_2=1)+\I( x_1+2x_2=1)\geq 2 \right\}
		\end{align*}
		with optimal value $v^*=1/2$. \\
		In this example, the generalized nullspace property holds. In fact, any $(\bm{x},\bm{s})\in \Re^2\times \Re^3$ satisfies the following conditions
		$$2x_1+3x_2-s_1=0,2x_1+x_2-s_2=0,x_1+2x_2-s_3=0,x_1+x_2= 0, \bm{s}\not=\bm{0},$$
		which is equivalent to
		$$x_2=-x_1, s_1=-x_1, s_2=x_1, s_3=-x_1,x_1\not=0.$$ 
		Simple calculations show that for any set $\mathcal{S} \subseteq [3] $ with $|\mathcal{S}|\leq 1$, we must have $\sum_{i\in\mathcal{S}}|s_i|<1/2\sum_{i\in[3]}|s_i|$. 
		Therefore, according to \Cref{the4}, the optimal value for $\alsox$ \eqref{eq_also_x_2} must be $v^A=v^*=1/2$. Indeed, in this example, the $\alsox$ \eqref{eq_also_x_2} reduces to
		{
			\begin{align*}
			&v^A =\min _{t}\bigg\{ t\colon \sum_{i\in[3]}	\I({s^*_i}=0) \geq 2, \\
			&( \bm x^*,\bm s^*)\in \argmin_{\bm{x}\in\Re^2, \bm{s}\in\Re^3}\biggl\{ \frac{1}{3}\sum_{i\in[3]}|s_i|\colon
			\begin{array}{l}
			\displaystyle  2x_1+3x_2=1+s_1,2x_1+x_2= 1+s_2,\\
			\displaystyle x_1+2x_2= 1+s_3, x_1+x_2\leq t \end{array}
			\biggr\}\biggr\}.
			\end{align*}}
		We see that if $t\geq1/2$, the optimal solution of the corresponding hinge-loss approximation is $x_1^*=1/2$, $x_2 ^*= 0, s_1^*=s_2^*=0, s_3^*=-1/2$. 
		This suggests that the optimal value for $\alsox$ \eqref{eq_also_x_2} is $v^A=v^*=1/2$.
		\QEDB
	\end{example}
	
	
	\noindent\textbf{Special Case II of Exactness: CCPs with Generalized Set-covering Type of Uncertain Constraints}\\
	In this special case, we consider the function $g\colon \mathcal{X}\times \Xi\rightarrow \Re_-\cup \{M\}$, where $M\in\Re_{++}$ is a positive constant. This special case is a generalization of the chance constrained set covering problem (see, e.g., \citealt{ahmed2013probabilistic,beraldi2002probabilistic}), where $\X\subseteq \{0,1\}^n$, $M=1$, and $g(\x,{\rxi})=1-{\rxi}^\top \x$ with binary support ${\rxi}\in \{0,1\}^n:=\Xi$. It is worthy of noting that (i) this special case might violate Assumption~\ref{A_2} that set $\mathcal{X}$ is convex and (ii) when the probability distribution is finite-support, this special case has been studied in \cite{ahmed2017nonanticipative} (see proposition 12). We show that $\alsox$ \eqref{eq_also_x_2} {still provides} an optimal solution when the probability distribution is arbitrary. 
	\begin{restatable}{theorem}{exactnesssetcovering}\label{exactness_set_covering} 
		(A generalization of proposition 12 in \citealt{ahmed2017nonanticipative}) Suppose that $g(\bm x,{\bm\xi})\colon \mathcal{X}\times \Xi\rightarrow \Re_-\cup \{M\}$, where $M\in\Re_{++}$ is a positive constant, the optimal value of $\alsox$ \eqref{eq_also_x_2} coincides with that of CCP \eqref{eq_ccp}.
	\end{restatable}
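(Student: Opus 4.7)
The plan is to establish $v^A \le v^*$; the reverse inequality $v^A \ge v^*$ is already guaranteed by Proposition~\ref{alsox_feasible}. The driving observation is that because $g(\bm x,\bm\xi) \in \Re_-\cup\{M\}$ with $M>0$, the hinge-loss functional collapses into a scalar multiple of the violation probability. Concretely, for any fixed $\bm x\in\X$, the inner minimization over $s(\cdot)\ge 0$ subject to $s(\tilde{\rxi})\ge g(\bm x,\tilde{\rxi})$ is pointwise minimized by $s(\tilde{\rxi})=(g(\bm x,\tilde{\rxi}))_+$, which under our structural assumption on $g$ takes values only in $\{0,M\}$ almost surely. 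Therefore at every optimal pair $(\bm x^*, s^*(\cdot))$ of the lower-level problem \eqref{eq_also_x_2b} we have
\[
\E[s^*(\tilde{\rxi})]=M\cdot\Pr\{\tilde{\rxi}\colon g(\bm x^*,\tilde{\rxi})>0\}, \qquad \{s^*(\tilde{\rxi})=0\}=\{g(\bm x^*,\tilde{\rxi})\le 0\}\ \text{a.s.}
\]

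Next, I would evaluate the bilevel program \eqref{eq_also_x_2} at the candidate outer level $t=v^*$. Plugging any CCP optimal solution $\bar{\bm x}$ (which satisfies $\bm c^\top\bar{\bm x}=v^*$) together with $\bar s(\tilde{\rxi})=(g(\bar{\bm x},\tilde{\rxi}))_+$ into the lower-level feasible region yields the bound $\E[\bar s(\tilde{\rxi})]=M\cdot\Pr\{g(\bar{\bm x},\tilde{\rxi})>0\}\le M\varepsilon$. Consequently the optimal value of the lower-level problem at $t=v^*$ is at most $M\varepsilon$. Applying the identity from the previous paragraph to any optimal $(\bm x^*, s^*(\cdot))$ gives $\Pr\{\tilde{\rxi}\colon g(\bm x^*,\tilde{\rxi})>0\}\le \varepsilon$, and hence $\Pr\{\tilde{\rxi}\colon s^*(\tilde{\rxi})=0\}\ge 1-\varepsilon$, which is exactly the feasibility check \eqref{eq_also_x_2c}. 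So $t=v^*$ is feasible in the outer minimization, which delivers $v^A\le v^*$ and the desired equality.

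The one delicate step I anticipate needing to justify carefully is the claim that every optimal $(\bm x^*, s^*(\cdot))$ of the lower-level problem satisfies $s^*(\tilde{\rxi})=(g(\bm x^*,\tilde{\rxi}))_+$ almost surely, not merely some particular optimizer. This reduces to observing that any feasible $s(\cdot)\ge (g(\bm x^*,\tilde{\rxi}))_+$ that strictly exceeds this pointwise minimizer on a set of positive probability would strictly increase $\E[s(\tilde{\rxi})]$, contradicting optimality; hence equality a.s.\ is forced, and the two-valued structure of $g$ then converts the expectation into a probability in one step. Notably, this argument nowhere invokes the convexity of $\X$ or the cone hypothesis from Assumption~\ref{A_2}, which is consistent with the remark that this special case allows non-convex instances such as the chance constrained set covering problem with $\X\subseteq\{0,1\}^n$, and also explains why the result extends the finite-support proposition~12 of \cite{ahmed2017nonanticipative} to arbitrary distributions without additional effort.
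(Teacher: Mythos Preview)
Your proposal is correct and follows essentially the same approach as the paper: both exploit the fact that $g\in\Re_-\cup\{M\}$ forces $s^*(\tilde{\rxi})=(g(\bm x^*,\tilde{\rxi}))_+\in\{0,M\}$ a.s., so $\E[s^*(\tilde{\rxi})]=M\Pr\{g(\bm x^*,\tilde{\rxi})>0\}$, and then compare against the bound $M\varepsilon$ furnished by a CCP optimum. The only cosmetic difference is that the paper packages the comparison as a proof by contradiction (assuming an optimal lower-level solution violates the chance constraint and deriving $\E[\bar s(\tilde{\rxi})]>M\varepsilon$, which contradicts feasibility of the CCP optimum with objective $\le M\varepsilon$), whereas you argue directly; your explicit verification that \emph{every} optimizer must satisfy $s^*=(g(\bm x^*,\cdot))_+$ a.s.\ is a nice touch that the paper leaves slightly implicit.
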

	
	\proof
	See Appendix~\ref{proof_exactness_set_covering}.
	\QEDA

	
	
	Interested readers are referred to \Cref{exactness_set_covering_example} in Appendix~\ref{sec_append_example} for a demonstration of the Special Case 2.

	\noindent\textbf{Special Case III of Exactness: A Single Linear CCP under an Elliptical Distribution}\\
	Let us revisit Special Case 2 in Section~\ref{alsox_sp}, which considers a single linear CCP under an elliptical distribution. For this special case, we  {show} that under additional assumptions, $\alsox$ \eqref{eq_alsox_el} provides an optimal solution to CCP \eqref{eq_ccp}.
	\begin{restatable}{theorem}{exactel}\label{exact_el} 
		For the single linear CCP \eqref{ccp_el} under an elliptical distribution, 
		the $\alsox$ \eqref{eq_alsox_el} provides an optimal solution to CCP \eqref{ccp_el}, provided that (i) $\mathcal{X}\subseteq\{\bm x:\sqrt{\bm{a}_1(\bm{x})^\top\bm{\mathrm{{\Sigma}}}\bm{a}_1(\bm{x})}=C\} $, where $C$ is a positive constant; or (ii) $\mathcal{X}\subseteq\{\bm x:b_1(\bm x)-\bm\mu^\top \bm{a}_1(\bm{x})=C\} $, where $C$ is an arbitrary constant.
	\end{restatable}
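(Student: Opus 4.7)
The plan is to prove $v^A = v^*$ by combining Proposition \ref{alsox_feasible} (which already gives $v^A \geq v^*$) with a direct argument that the lower-level problem \eqref{eq_alsox_elb} of $\alsox$, under either condition (i) or (ii), is equivalent to maximizing the ``elliptical quantile'' $\alpha(\bm x) := (b_1(\bm x) - \bm{\mu}^\top \bm{a}_1(\bm x))/\sigma(\bm x)$ with $\sigma(\bm x) := \sqrt{\bm{a}_1(\bm x)^\top \bm{\mathrm{\Sigma}} \bm{a}_1(\bm x)}$ -- precisely the quantity the CCP \eqref{ccp_el} requires to exceed $\mathrm{\Phi}^{-1}(1-\varepsilon)$.

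The first ingredient I would establish is a scalar monotonicity result for $h(\alpha) := \overline{G}(\alpha^2/2) - \alpha + \alpha \mathrm{\Phi}(\alpha)$. Using the identity $\varphi(\alpha) = k\hat{g}(\alpha^2/2)$, the chain rule yields $\frac{d}{d\alpha}\overline{G}(\alpha^2/2) = -\alpha \varphi(\alpha)$, so
\[
h'(\alpha) = -\alpha \varphi(\alpha) - 1 + \mathrm{\Phi}(\alpha) + \alpha \varphi(\alpha) = \mathrm{\Phi}(\alpha) - 1 \leq 0,
\]
meaning $h$ is nonincreasing. Intuitively, the expected hinge loss in the standard elliptical representation decreases in the threshold $\alpha$.

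For Case (i), since $\sigma(\bm x) \equiv C$ on $\mathcal{X}$, the lower-level objective collapses to $C \cdot h(\alpha)$, whose minimization over $\{\bm x \in \mathcal{X} : \bm{c}^\top \bm x \leq t\}$ is, by the monotonicity lemma, equivalent to maximizing $\alpha(\bm x)$. For any $t \geq v^*$ the CCP admits some $\bm x$ with $\bm{c}^\top \bm x \leq t$ and $\alpha(\bm x) \geq \mathrm{\Phi}^{-1}(1-\varepsilon)$, so the lower-level maximizer $\bm x^*$ inherits $\alpha(\bm x^*) \geq \mathrm{\Phi}^{-1}(1-\varepsilon)$, verifying \eqref{eq_alsox_elc} and hence $v^A \leq v^*$.

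For Case (ii), the identity $b_1(\bm x) - \bm{\mu}^\top \bm{a}_1(\bm x) \equiv C$ forces $\alpha = C/\sigma(\bm x)$, and the objective becomes $f(\sigma(\bm x))$ with $f(y) := y\, h(C/y) = y\,\overline{G}(C^2/(2y^2)) - C + C\,\mathrm{\Phi}(C/y)$. The technical crux is showing $f' \geq 0$: differentiating and invoking $\varphi(C/y) = k\hat{g}(C^2/(2y^2))$ causes the two cross terms involving $k\hat{g}(C^2/(2y^2))$ and $\varphi(C/y)$ to cancel, leaving the clean expression $f'(y) = \overline{G}(C^2/(2y^2)) \geq 0$. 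Minimizing the lower-level objective is therefore equivalent to minimizing $\sigma(\bm x)$, and since Assumption \ref{A_3} together with the standard risk regime $\varepsilon < 1/2$ forces $C > 0$ to sustain CCP feasibility, this is in turn equivalent to maximizing $\alpha = C/\sigma(\bm x)$; the same argument as Case (i) then closes $v^A \leq v^*$. The main obstacle I expect is this derivative-cancellation computation and the attendant sign analysis of $C$; once those are handled, the remainder reduces to the scalar monotonicity structure developed in the first step.
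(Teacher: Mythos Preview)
Your approach mirrors the paper's proof in Appendix~\ref{proof_exact_el} almost exactly: both cases hinge on showing the scalar function $h(\alpha)=\overline{G}(\alpha^2/2)-\alpha+\alpha\,\mathrm{\Phi}(\alpha)$ is decreasing (Case~(i)) and that $\sigma\mapsto\sigma\,h(C/\sigma)$ has derivative $\overline{G}(C^2/(2\sigma^2))>0$ (Case~(ii)), followed by comparison with a CCP-feasible point at $t=v^*$. Your explicit restriction to $\varepsilon<1/2$ and $C>0$ in Case~(ii) is not a weakness relative to the paper---the paper's own chain $\sigma^*\leq\hat\sigma\leq C/\mathrm{\Phi}^{-1}(1-\varepsilon)$ already presupposes $\mathrm{\Phi}^{-1}(1-\varepsilon)>0$, so your added care is consistent with what the published proof actually establishes.
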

	
	\proof
	See Appendix~\ref{proof_exact_el}.
	\QEDA
%
	
	{Please note that our analysis holds for any $\varepsilon\in(0,1)$. For the larger risk level $\varepsilon\in(0.5,1)$, the feasible region of CCP \eqref{ccp_el} can be non-convex  \citep{henrion2006some} and intractable (see \Cref{el_hard} in Appendix~\ref{proof_el_hard}).} {The two conditions in \Cref{exact_el} may not be very strong and can be found in the CCP literature or relevant application problems. For example, proposition 5.1 of \citealt{van2019eventual} studied the eventual convexity analysis of a CCP under an elliptical distribution and the condition that $\bm u=\bm 0$ and  $b_1(\bm x)$ is a nonnegative constant, which is a special case of Condition (ii). }

	{We also remark that either condition in \Cref{exact_el} can be satisfied in practical application problems, for example, let us consider the following portfolio selection problem \citep{markowitz1991foundations,pagnoncelli2009sample} as
				$
		v^*=	\min_{\bm{x}\in\Re^n_+} \{\bm{c}^\top \bm{x} \colon \Pr\{b_1\geq \trxi^\top\bm x \}\geq 1-\varepsilon,\bm e^\top \bm x=1\}, 
$
	where decision vector $\bm{x}$ denotes the investment plan, scalar $b_1$ represents the portfolio return level, $\trxi$ is the stochastic return vector of $n$ risky assets, and $\bm{c}$ is the cost vector. Suppose $\trxi$ follows a multi-variate elliptical distribution. Then using the notation in \Cref{alsox_sp},  the chance constrained portfolio selection problem is equivalent to
		\begin{align}
				v^*=	\min_{\bm{x}\in\Re^n_+} \left\{\bm{c}^\top \bm{x} \colon b_1
-\bm{\mu}^\top\bm {x}\geq \mathrm{\Phi}^{-1}(1-\varepsilon) \sqrt{\bm x^\top\bm{\mathrm{{\Sigma}}}\bm x},\bm e^\top \bm x=1\right\}.\label{example_portfolio_eq}
			\end{align}
		Let us consider the following two cases of problem \eqref{example_portfolio_eq}:
		\begin{itemize}
			\item To find an efficient portfolio, that is, to achieve the highest expected return of all the feasible portfolios with the same risk level (see the details in \citealt{fabozzi2012mean}), the overall risk level $ \sqrt{\bm x^\top\bm{\mathrm{{\Sigma}}}\bm x}$ is a constant (i.e., Condition (i) of \Cref{exact_el} is satisfied). In this case, according to \Cref{exact_el}, $\alsox$ \eqref{eq_alsox_el} returns an optimal solution; and 
			\item When all the portfolios share the same expected return (see the discussions in \citealt{chow1995portfolio}), that is, when $\bm u^\top \bm x$ is a constant, Condition (ii) of \Cref{exact_el} is satisfied. Similarly, according to \Cref{exact_el}, $\alsox$ \eqref{eq_alsox_el} returns an optimal solution. 
		\end{itemize}}	
	
	
	{Another byproduct of \Cref{exact_el} is to demonstrate that $\alsox$ can be strictly better than $\CVaR$ approximation. This is because $\CVaR$ approximation for the single linear CCP \eqref{ccp_el} under an elliptical distribution (see, e.g., theorem 9 in \citealt{chen2019sharing}) can be written as 
		\begin{align*}
		v^\CVaR=\min_{\bm x\in\X}\left\{\bm c^\top \bm x \colon
		b_1(\bm x) - \bm{\mu}^\top\bm{a}_1(\bm{x})\geq 
		\left(\eta^{\CVaR}+\mathrm{\Phi}^{-1}(1-\varepsilon)\right)\sqrt{\bm{a}_1(\bm{x})^\top\bm{\mathrm{{\Sigma}}}\bm{a}_1(\bm{x})} \right\},
		\end{align*}
		with constant
		\begin{equation*}
		\eta^{\CVaR} =\frac{1}{\varepsilon}\int_{\frac{1}{2}(\mathrm{\Phi}^{-1}(1-\varepsilon))^2}^\infty k\hat{g}(z)dz-\mathrm{\Phi}^{-1}(1-\varepsilon) = \overline{G}\left(\left(\mathrm{\Phi}^{-1}(1-\varepsilon)\right)^2/2\right)/\varepsilon -\mathrm{\Phi}^{-1}(1-\varepsilon). 
		\end{equation*}
		According to the proof of \Cref{exact_el} in Appendix~\ref{proof_exact_el}, we observe that $\eta^{\CVaR}> 0$ for all $\varepsilon\in(0,1)$. Therefore, the feasible region of the $\CVaR$ approximation is strictly contained in that of CCP \eqref{ccp_el}. As the $\alsox$ is exact under the conditions of \Cref{exact_el}, $\alsox$ can be strictly better than the $\CVaR$ approximation whenever $v^\CVaR>v^*$.}

	{We also remark that albeit the first condition guarantees the optimality of $\alsox$ \eqref{eq_alsox_el}, it might violate Assumption \ref{A_2}, namely, set $\X$ might not be convex. We show that these two conditions are the best that we can expect for the exactness of $\alsox$ \eqref{eq_alsox_el}.  In fact,
		{ \Cref{exactness_el_example} in Appendix~\ref{sec_append_example}} shows that $\alsox$ \eqref{eq_alsox_el}  might not be able to find an optimal solution to CCP \eqref{ccp_el} if it violates both conditions of \Cref{exact_el}.}
	
%
	
	\subsection{Approximation Ratio for the Covering CCPs under Discrete Support}
	In this subsection, we analyze the approximation ratio for a special case of finite-support CCP \eqref{eq_also_x_discrete} with the covering structure (i.e., covering CCPs), where $\X= \Re_+^n$, $\bm{c}\in \Re_+^n$, the constraints $g(\bm x,\bm{\xi}^i) = \bm{b}^i-\bm{A}^i\bm{x}$ with $\bm{A}^i\in \Re_+^{m\times n}$, $ \bm{b}^i\in \Re_{++}^m$, for all $ i\in [N]$. Various applications have been studied in literature (see, e.g., \citealt{shiina1999numerical,takyi1999surface,talluri2006vendor,deng2016decomposition,dentcheva2000concavity,xie2020bicriteria,qiu2014covering}) that can be formulated as covering CCPs. 
	Without loss of generality, $\bm{b}^i$ can always be normalized to $\bm{e}$, and thus covering CCP \eqref{eq_ccp} admits the following form
	\begin{equation}
	\label{covering_ccp}
	v^*=\min _{\bm{x}\in\Re^n_+,\bm{z}\in \{0,1\}^N} \left\{ \bm{c}^\top \bm{x}\colon \sum_{i\in [N]}z_i\geq N-\floor{N\varepsilon},\bm{A}^i\bm{x}\geq z_i\e, \forall i \in [N]\right\},
	\end{equation}
	and the corresponding $\alsox$ \eqref{eq_also_x_2} is
	\begin{subequations}\label{covering}
		\begin{align}
		v^A =&\min _{ {t}}\quad t,\\
		\text{s.t.}\quad&(\bm x^*,\bm s^*)\in \argmin _{\bm{x}\in\Re^n_+, \bm s\in\Re^N_+}\left\{ \frac{1}{N}\sum_{i\in[N]}s_i\colon \bm{c}^\top \bm{x} \leq t, \bm{A}^i\bm{x}\geq \bm{e}-s_i\bm{e}, \forall i\in [N] \right\}, \label{covering_hinge}\\
		& \sum_{i\in[N]}	\I({s^*_i}=0) \geq N-\floor{N\varepsilon}.
		\end{align}
	\end{subequations}
	Consider the following continuous relaxation of the covering CCP \eqref{covering_ccp} as
	\begin{equation}
	\label{covering_relax}
	v^{rel}=\min _{\bm{x}\in\Re^n_+, \bm s\in\Re^N_+} \left\{ \bm{c}^\top \bm{x}\colon \sum_{i\in[N]}s_i\leq \floor{N\varepsilon}, \bm{A}^i\bm{x}\geq \bm{e}-s_i\bm{e}, \forall i\in [N]\right\}.
	\end{equation}
	Particularly, we observe that (i) in the continuous relaxation \eqref{covering_ccp}, at optimality, we must have $s_i\in [0,1]$ for all $i\in [N]$; and (ii) the continuous relaxation value $v^{rel}$ is a lower bound for the covering CCP \eqref{covering_ccp}. 
	Next, we {show} that for any $t\geq (\floor{N\varepsilon}+1)v^{rel}$, any optimal solution to the hinge-loss approximation \eqref{covering_hinge} is feasible to the covering CCP \eqref{covering_ccp}. This implies that $v^A/v^*\leq v^A/v^{rel}\leq \floor{N\varepsilon}+1$, i.e., $\alsox$ \eqref{covering} yields 
	a $(\floor{N\varepsilon}+1)-$approximation guarantee, achieving the same best known approximation ratio for the finite-support covering CCPs \citep{ahmed2018relaxations}. {When employing $\alsox$ \Cref{alg_alsox}, if finding a feasible solution of a covering CCP is difficult,  one can use $(\floor{N\varepsilon}+1)v^{rel}$ as a valid upper bound. In the numerical study, we apply this strategy when running $\alsox$ \Cref{alg_alsox} to solve a covering CCP instance.}
	
	\begin{restatable}{theorem}{thmccp}\label{thm_ccp} 
		For the covering CCP \eqref{covering_ccp},
		the $\alsox$ \eqref{covering} yields a $(\floor{N\varepsilon}+1)-$approximation guarantee, that is, $v^A/v^*\leq \floor{N\varepsilon}+1$. 
	\end{restatable}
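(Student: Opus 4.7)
The plan is to establish the inequality $v^A \leq (\floor{N\varepsilon}+1)\,v^{rel}$ and then combine it with the bound $v^{rel} \leq v^*$ (observed just above in the excerpt) to conclude $v^A/v^* \leq \floor{N\varepsilon}+1$. The route goes through constructing an explicit feasible point of the hinge-loss approximation \eqref{covering_hinge} at $t = (\floor{N\varepsilon}+1)\,v^{rel}$ whose induced slack vector has support at most $\floor{N\varepsilon}$.

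First I would take an optimal solution $(\bm{x}^{rel}, \bm{s}^{rel})$ of the LP relaxation \eqref{covering_relax}, for which we already know $s_i^{rel} \in [0,1]$ and $\sum_i s_i^{rel} \leq \floor{N\varepsilon}$. The key construction is the scaled solution $\bm{x}^\dagger := (\floor{N\varepsilon}+1)\,\bm{x}^{rel}$, which clearly satisfies $\bm{c}^\top \bm{x}^\dagger = (\floor{N\varepsilon}+1)\,v^{rel}$ and $\bm{A}^i \bm{x}^\dagger \geq (\floor{N\varepsilon}+1)(1-s_i^{rel})\,\bm{e}$ for every $i$. I would then partition the scenarios via $K := \{i : s_i^{rel} > \floor{N\varepsilon}/(\floor{N\varepsilon}+1)\}$ and bound its size by a pigeonhole argument: summing the strict lower bound over $i \in K$ yields $|K|\cdot \floor{N\varepsilon}/(\floor{N\varepsilon}+1) < \sum_i s_i^{rel} \leq \floor{N\varepsilon}$, whence $|K| \leq \floor{N\varepsilon}$. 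For $i \notin K$ the scaling already forces $\bm{A}^i \bm{x}^\dagger \geq \bm{e}$, so taking $s_i^\dagger = 0$ for $i \notin K$ and $s_i^\dagger = 1-(\floor{N\varepsilon}+1)(1-s_i^{rel}) \in (0,1]$ for $i \in K$ gives a feasible point $(\bm{x}^\dagger, \bm{s}^\dagger)$ of \eqref{covering_hinge} at $t = (\floor{N\varepsilon}+1)\,v^{rel}$ with $|\supp(\bm{s}^\dagger)| = |K| \leq \floor{N\varepsilon}$ and $\sum_i s_i^\dagger \leq |K| \leq \floor{N\varepsilon}$.

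The main obstacle I foresee is promoting this \emph{feasible} small-support point into an \emph{optimal} solution of the hinge-loss at $t$, which is what the bilevel definition \eqref{covering} of $v^A$ actually demands: $(\bm{x}^\dagger, \bm{s}^\dagger)$ might not itself achieve the hinge-loss optimum. To handle this, I would use that the hinge-loss is linear (for fixed $t$) and any optimum $(\bm{x}^*, \bm{s}^*)$ satisfies $\sum_i s_i^* \leq \sum_i s_i^\dagger \leq \floor{N\varepsilon}$. One can then argue that among the optimal face there is an extreme optimum whose active-constraint pattern forces $|\supp(\bm{s}^*)| \leq \floor{N\varepsilon}$: if the budget $\bm{c}^\top \bm{x}^* \leq t$ is slack at an optimum with $|\supp(\bm{s}^*)|>\floor{N\varepsilon}$, a small rescaling $\bm{x}^*\mapsto (1+\eta)\bm{x}^*$ strictly reduces each positive $s_i^*<1$ by the covering nonnegativity, contradicting optimality, while if the budget is tight, the LP basic-solution structure combined with $\sum_i s_i^* \leq \floor{N\varepsilon}$ forces at most $\floor{N\varepsilon}$ of the slacks to be positive. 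Once such a small-support optimum is exhibited, the bilevel definition yields $v^A \leq t = (\floor{N\varepsilon}+1)\,v^{rel} \leq (\floor{N\varepsilon}+1)\,v^*$, establishing the claimed approximation ratio.
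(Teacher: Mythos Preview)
Your construction of the scaled point $(\bm{x}^\dagger,\bm{s}^\dagger)$ with $|\supp(\bm{s}^\dagger)|\leq\floor{N\varepsilon}$ is correct and is exactly the relax-and-scale argument the paper invokes (citing \citealt{ahmed2018relaxations}) to bound the hinge-loss optimum. The gap is in the second half, where you try to promote this feasible point to an \emph{optimal} one with small support. The appeal to ``LP basic-solution structure combined with $\sum_i s_i^*\leq\floor{N\varepsilon}$'' does not do the work you need: basicness bounds the number of nonzeros by the number of constraints, not by $\floor{N\varepsilon}$, and there is no mechanism linking the \emph{sum} bound $\sum_i s_i^*\leq\floor{N\varepsilon}$ to the \emph{support} bound $|\supp(\bm{s}^*)|\leq\floor{N\varepsilon}$. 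Indeed, the tightness example in Proposition~\ref{prop_coveirng_ccp} exhibits optima (for $t$ just below the threshold) with $\floor{N\varepsilon}+1$ strictly positive slacks, each small; your extreme-point heuristic does not exclude this.

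The paper takes a different and sharper route: it shows that for $t\geq(\floor{N\varepsilon}+1)v^{rel}$, \emph{every} optimal solution of the hinge-loss satisfies $|\supp(\bm{s}^*)|\leq\floor{N\varepsilon}$, via a duality contradiction. Assuming an optimal $(\bm{x}^*,\bm{s}^*)$ with $|\supp(\bm{s}^*)|\geq\floor{N\varepsilon}+1$, complementary slackness in the dual of \eqref{covering_hinge} forces $\bm{\omega}_i^{*\top}\bm{e}=1/N$ for each $i\in\supp(\bm{s}^*)$; combined with the upper bound $v^A(t)\leq\floor{N\varepsilon}/N$ (which is where your $(\bm{x}^\dagger,\bm{s}^\dagger)$ enters), this yields $\gamma^*>0$ and $\gamma^*<\sum_i\bm{\omega}_i^{*\top}\bm{e}/t$. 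Rescaling the hinge-loss dual variables by $1/\gamma^*$ then produces a feasible dual solution to the continuous relaxation \eqref{covering_relax} with objective strictly exceeding $v^{rel}$, contradicting weak duality. This ``every optimal solution'' conclusion is genuinely stronger than what your argument targets, and it is what the algorithmic guarantee requires: Algorithm~\ref{alg_alsox} picks \emph{an} optimal solution returned by a solver, so the bound must hold regardless of tie-breaking among optima.
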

	
	\proof
	See Appendix~\ref{proof_thm_ccp}.
	\QEDA

	Finally, we conclude this section by showing that the approximation ratio of $\alsox$ \eqref{covering} for the covering CCP \eqref{covering_ccp} is tight.
	\begin{restatable}{proposition}{propcoveirngccp}\label{prop_coveirng_ccp} 
		For the covering CCP \eqref{covering_ccp},
		the $(\floor{N\varepsilon}+1)-$approximation ratio of $\alsox$ \eqref{covering} is tight, i.e., it is possible that $v^A/v^*= \floor{N\varepsilon}+1$. 
	\end{restatable}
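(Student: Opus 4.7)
The plan is to construct a specific instance of \eqref{covering_ccp} that saturates the bound in \Cref{thm_ccp} under an admissible run of \Cref{alg_alsox}. Let $R = \floor{N\varepsilon}+1$ and consider the covering CCP with $N = R$ scenarios, $n = R$ variables, $\bm c = \bm e$, $\varepsilon = (R-1)/R$, and, for each $i\in[R]$, a single linear scenario $R x_i \geq 1$ (so $\bm A^i = R \bm e_i^\top$ and $\bm b^i = 1$). Since $N - \floor{N\varepsilon} = 1$, only one scenario must be satisfied, and placing the entire budget on a single coordinate is optimal, giving $v^* = 1/R$.

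Next I would analyze the hinge-loss subproblem \eqref{covering_hinge} for every $t \in [1/R, 1]$. With $s_i = (1 - R x_i)_+$ at optimality and $x_i \in [0, 1/R]$ without loss of generality, the hinge-loss objective collapses to $\sum_{i\in[R]} s_i = R - R\sum_{i\in[R]} x_i = R - R t$, which is \emph{constant} on the entire face $F_t = \{\bm x \in [0, 1/R]^R : \sum_{i\in[R]} x_i = t\}$. The uniform ``spread'' allocation $\bar{\bm x} = (t/R)\bm e$ belongs to $F_t$ but produces $s_i = 1 - t > 0$ for every $i$, that is, $R$ violations, strictly exceeding the allowed $\floor{N\varepsilon} = R - 1$. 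Since neither \eqref{eq_also_x_2} nor \Cref{alg_alsox} imposes a tie-breaking rule among optimal hinge-loss solutions, one admissible execution of ALSO-X selects $\bar{\bm x}$ at each iteration. Under that selection the feasibility check in Step 4 rejects every $t<1$ and drives the binary search up to $t = 1$, at which point $\bar{\bm x}$ satisfies $s_i = 0$ for every $i$. Hence $v^A = 1$ for this execution, and $v^A/v^* = 1/(1/R) = R = \floor{N\varepsilon}+1$.

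The main obstacle is that the face $F_t$ also contains ``concentrated'' optima such as $(1/R)\bm e_{i_0}$, which have only $R-1$ violations and are feasible; a different LP oracle preferring vertices would therefore return $v^A = 1/R$, collapsing the ratio to $1$. To make the tightness robust to oracle choice, I would additionally consider the perturbed family in which the scenario constraint is replaced by $R x_i + \delta\sum_{j\neq i} x_j \geq 1$ for a small $\delta>0$, equivalent to $\bm A^i = R\bm e_i^\top + \delta(\bm e^\top - \bm e_i^\top)$. A direct slope comparison along $\sum_{i\in[R]} x_i = t$ shows that every $x$-allocation for which all scenarios remain violated shares the common hinge-loss value $R - t(R+\delta(R-1))$, so $\bar{\bm x}$ stays optimal, and this value stays strictly positive until $t = R/[R+\delta(R-1)]$, producing $v^A/v^* = R^2/[R+\delta(R-1)]$ in the worst-case execution. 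Letting $\delta\downarrow 0$ recovers the limiting ratio $R$, confirming that the approximation guarantee $\floor{N\varepsilon}+1$ in \Cref{thm_ccp} cannot be improved. Either argument establishes the claim.
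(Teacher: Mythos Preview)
Your core construction is essentially the paper's example specialized to $N=\floor{N\varepsilon}+1$. The paper takes $n=\floor{N\varepsilon}+1$ variables and $N$ scenarios with $\bm\xi^i=\bm e_i$ for $i\in[\floor{N\varepsilon}+1]$ and $\bm\xi^i=\bm e$ for the remaining $N-\floor{N\varepsilon}-1$ ``easy'' scenarios, obtaining $v^*=1$ and, under the uniform-spread hinge-loss optimizer, $v^A=\floor{N\varepsilon}+1$. Your instance simply drops the easy scenarios by setting $N=R$, which suffices since the proposition only asserts existence. Both arguments rest on the same observation: the hinge-loss objective is flat on the face $\{\bm x\geq\bm 0:\sum_i x_i=t,\ x_i\leq 1\}$, and the symmetric optimizer violates every hard scenario until $t$ reaches $(\floor{N\varepsilon}+1)v^*$.

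You are right to flag the tie-breaking issue, which the paper glosses over; its construction has exactly the same multiplicity of hinge-loss optima as yours. However, your perturbation does not resolve it. With $\bm A^i=R\bm e_i^\top+\delta(\bm e-\bm e_i)^\top$, fix any $S\subseteq[R]$, set $x_i=\tau:=(1-\delta t)/(R-\delta)$ for $i\in S$, and spread the residual budget $t-|S|\tau$ evenly over $[R]\setminus S$. A direct computation gives the common objective value $R-t\bigl(R+\delta(R-1)\bigr)$ for every choice of $|S|$, so allocations that satisfy one or more scenarios remain tied with the fully-violating uniform allocation. The perturbed family is therefore no more oracle-robust than the original, and only reproduces the same worst-case ratio in the limit $\delta\downarrow 0$. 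This does not affect the validity of your proof of the proposition as stated---your first argument already suffices---but the claim that the perturbation makes the tightness robust to oracle choice should be dropped.
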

	
	\begin{proof} See Appendix~\ref{proof_prop_coveirng_ccp}.
		\QEDA
	\end{proof}

	\section{$\alsoxt +$ Algorithm: Breaking the Symmetry and Improving $\alsoxt$ \Cref{alg_alsox} using Alternating Minimization Method}
	\label{ap}
	
	In \Cref{sec_alsox_sub}, recall that we derive $\alsox$ \eqref{eq_also_x_2} and its related \Cref{alg_alsox} by letting the functional variable $z(\rxi)=1$ in the CCP \eqref{alsox_bilnear}. As shown in the previous sections, $\alsox$ \eqref{eq_also_x_2} successfully provides exact optimal and approximate solutions for many special families of CCP \eqref{alsox_bilnear}. On the other hand, simply forcing the functional variable $z(\rxi)=1$ in CCP \eqref{alsox_bilnear} might not be ideal, for instance, \Cref{am_dc_example}  demonstrates that $\alsox$ \eqref{eq_also_x_2} might be fooled if the random parameters ${\trxi}$ obey a joint symmetric distribution. This motivates us to improve the $\alsox$ \Cref{alg_alsox} by optimizing the functional variable ${z(\cdot)}$ as well. In general, optimizing over both ${s(\cdot)}$ and ${z(\cdot)}$ can be difficult. Thus, we propose to enhance $\alsox$ \Cref{alg_alsox}, termed ``$\alsox+$ algorithm," by using a better Alternating Minimization ($\AM$) method, which is to optimize over $(\bm{x},{s(\cdot)})$ and ${z(\cdot)}$ of the lower-level problem \eqref{alsox_bilnearb} in an alternating fashion. The key idea of $\alsox+$ algorithm is that in Step 4 of $\alsox$ \Cref{alg_alsox}, one should run the $\AM$ method if the current solution is infeasible (i.e., the optimal solution $(\bm{x}^*,{s^*(\cdot)})$ of the hinge-loss approximation \eqref{eq_also_x_2b} violates the chance constraint, $\Pr\{\tilde{\bm\xi} \colon{{s}^*}(\tilde{\rxi})>0\} >\varepsilon$). 

	\subsection{The Proposed Alternating Minimization (\textbf{AM}) Method}
	\label{pam}
	To begin with, we  {first introduce} the $\AM$ method. As mentioned earlier, for a given $t$, when the hinge-loss approximation \eqref{eq_also_x_2b} is unable to provide a feasible solution to the CCP \eqref{eq_ccp}, i.e., its optimal solution $(\bm{x}^*,{s^*(\cdot)})$ is subject to $\Pr\{\tilde{\bm\xi} \colon{{s}^*}(\tilde{\rxi})>0\} >\varepsilon$. Under this circumstance, we run the $\AM$ method to optimize the lower-level problem \eqref{alsox_bilnearb} in hope of overcoming the infeasibility, which proceeds as follows. First, we observe that the constraint system in the lower-level problem \eqref{alsox_bilnearb} can be separated into two parts, with respect to the functional variable ${z(\cdot)}$ and with respect to variables $\bm x$ and ${s(\cdot)}$. This allows us to optimize over ${z(\cdot)}$ and $(\bm{x},{s(\cdot)})$ in an iterative way. Specifically, at iteration $k+1$, when fixing $(\bm x,{s(\cdot)})=(\bm x^{k},{s^k(\cdot)})$ with $(\bm x^{k},{s^k(\cdot)})$ from the previous iteration in the lower-level problem \eqref{alsox_bilnearb}, we solve the following optimization problem:
	\begin{subequations}
		\begin{align}
		{{z}^{k+1}(\cdot)} \in \argmin _{{z(\cdot)}} \left\{ \E\left[{z}(\tilde{\rxi}){s}^{k}(\tilde{\rxi})\right]\colon z(\tilde{\rxi})\in [0,1], \E[{z}(\tilde{\rxi})]\geq 1-\varepsilon \right\},	\label{basic_ap_z}
		\end{align}
		which can be done via sorting the values of $\{s^{k}({\bm{\xi}})\}_{{\rxi}\in\Xi}$. Next fixing the value of the functional variable ${z(\cdot)}={z^{k+1}(\cdot)}$, we solve the following convex optimization problem:
		\begin{align}
		\left(\bm{x}^{k+1},{s^{k+1}(\cdot)}\right) \in \argmin _{ \bm{x}\in \mathcal{X},{s(\cdot)}} \left\{ \E\left[z^{k+1}(\tilde{\rxi}){s}(\tilde{\rxi})\right]\colon \bm{c}^\top \bm{x} \leq t, g(\bm x,\tilde{\bm {\xi}})\leq {s}(\tilde{\rxi}), {s}(\tilde{\rxi})\geq 0 \right\}.\label{basic_ap_1}
		\end{align}
		Note that the problem \eqref{basic_ap_1} can be further simplified as
		\begin{equation*}
		\bm{x}^{k+1}\in \argmin _{\bm{x}\in \mathcal{X}} \left\{ \E\left[z^{k+1}(\tilde{\rxi}) [g(\bm x,\tilde{\bm {\xi}})]_+ \right]\colon \bm{c}^\top \bm{x} \leq t \right\},
		\end{equation*}
		which can be solved using the subgradient descent method proposed in \Cref{sdm} for instance.
		We continue this procedure until reaching the stopping criterion. The detailed implementation can be found in \Cref{alg_am}. Note that according to bilinear programming Formulation \eqref{eq_bilinear}, the output solution $	\bm{x}^{k+1}$ is feasible to CCP \eqref{eq_ccp} if and only if $\E[z^{k+1}(\tilde{\rxi})s^{k+1}(\tilde{\rxi})]=0$.
	\end{subequations}
	
	\begin{algorithm}[htbp]
		\caption{Alternating Minimization ($\AM$) Method to Solve the Lower-level Problem \eqref{alsox_bilnearb}}
		\label{alg_am}
		\begin{algorithmic}[1]
			\State Let $k=0$. Let $\delta_2$ denote the stopping tolerance parameter, $t$ be the current given value of the upper-level problem, and ${z^0(\cdot)}$ be the given initial solution of ${z(\cdot)}$, respectively
			\Do
			\State 	Solve \eqref{basic_ap_z} and \eqref{basic_ap_1} with optimal solutions ${{z}^{k+1}(\cdot)}$ and $(\bm x^{k+1},{s^{k+1}(\cdot)})$, respectively
			\State Let	$\Delta=\left\lvert 	\E[z^{k+1}(\tilde{\rxi})s^{k+1}(\tilde{\rxi})]-	\E[z^{k}(\tilde{\rxi})s^{k}(\tilde{\rxi})]\right\rvert $ and $k= k+1$
			\doWhile {$\Delta\geq\delta_2$}
			\State The output solution $\bm x^{k+1}$ is feasible to CCP \eqref{eq_ccp} if $ \E[{z}^{k+1}(\tilde{\rxi}){s}^{k+1}(\tilde{\rxi})]=0$; otherwise, it is infeasible
		\end{algorithmic}
	\end{algorithm}
	
	In $\AM$ \Cref{alg_am}, we observe that the sequence of objective values $\{\E[z^{k}(\tilde{\rxi})s^{k}(\tilde{\rxi})]\}_{k\in \Ze_+}$ converges. This demonstrates that the stopping criterion using the objective values is indeed valid.
	
	\begin{restatable}{proposition}{alsoxconvergence}\label{alsox_convergence} 	
		The sequence of objective values $\{\E[z^{k}(\tilde{\rxi})s^{k}(\tilde{\rxi})]\}_{k\in \Ze_+}$ generated by the $\AM$ \Cref{alg_am} is monotonically nonincreasing, bounded from below, and hence converges.
	\end{restatable}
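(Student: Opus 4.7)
The plan is to establish the three claims in order: monotonicity, lower boundedness, and convergence via the monotone convergence theorem. The key observation is that each iteration of $\AM$ \Cref{alg_am} performs a block minimization of the \emph{same} objective $\E[z(\trxi) s(\trxi)]$ over disjoint sets of decision variables, so the sequence of objective values automatically cannot increase.

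First I would spell out the two half-steps within one iteration. Going from iteration $k$ to $k+1$, observe that $(\bm x^{k}, s^{k}(\cdot))$ remains feasible to problem \eqref{basic_ap_1} with ${z(\cdot)}={z^{k+1}(\cdot)}$, and ${z^{k}(\cdot)}$ remains feasible to problem \eqref{basic_ap_z} with ${s(\cdot)}={s^{k}(\cdot)}$, since the feasible sets of the two subproblems do not interact. By optimality of ${z^{k+1}(\cdot)}$ in \eqref{basic_ap_z},
\begin{equation*}
\E\bigl[z^{k+1}(\trxi)\, s^{k}(\trxi)\bigr] \leq \E\bigl[z^{k}(\trxi)\, s^{k}(\trxi)\bigr],
\end{equation*}
and by optimality of $(\bm x^{k+1}, s^{k+1}(\cdot))$ in \eqref{basic_ap_1},
\begin{equation*}
\E\bigl[z^{k+1}(\trxi)\, s^{k+1}(\trxi)\bigr] \leq \E\bigl[z^{k+1}(\trxi)\, s^{k}(\trxi)\bigr].
\end{equation*}
Chaining these two inequalities yields $\E[z^{k+1}(\trxi)\, s^{k+1}(\trxi)] \leq \E[z^{k}(\trxi)\, s^{k}(\trxi)]$, which is precisely the monotonically nonincreasing claim.

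Next I would argue the lower bound. Because the iterates satisfy ${z^{k}(\trxi)} \in [0,1]$ and ${s^{k}(\trxi)} \geq 0$ almost surely for every $k$, the integrand $z^{k}(\trxi)\, s^{k}(\trxi)$ is nonnegative almost surely, hence $\E[z^{k}(\trxi)\, s^{k}(\trxi)] \geq 0$ for all $k \in \Ze_+$. Combining the monotonicity established above with this uniform lower bound, the monotone convergence theorem for real sequences implies that $\{\E[z^{k}(\trxi)\, s^{k}(\trxi)]\}_{k\in\Ze_+}$ converges to some nonnegative limit, completing the proof.

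There is no substantial obstacle here; the one subtlety I would make sure to state explicitly is that the two blocks $z(\cdot)$ and $(\bm x, s(\cdot))$ have independent feasibility requirements in the lower-level problem \eqref{alsox_bilnearb} once the bilinear coupling $\E[z(\trxi) s(\trxi)]$ is moved into the objective (as is the case after applying Proposition~\ref{also_+_formulation_1} and separating constraints). This separability is what guarantees that the previous iterate stays feasible in each half-step, validating the optimality comparisons used above.
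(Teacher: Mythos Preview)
Your proof is correct and takes essentially the same approach as the paper: chain the two half-step optimality inequalities to get monotonicity, invoke nonnegativity of $z^k(\cdot)$ and $s^k(\cdot)$ for the lower bound, and conclude via the monotone convergence theorem for real sequences. The only difference is that you spell out the separability of the two blocks' feasible sets more explicitly than the paper does, which is a helpful clarification but not a distinct argument.
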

	\proof
	See Appendix~\ref{proof_alsox_convergence}.
	\QEDA
	

	It is worthy of mentioning that the sequence of solutions $\{(\bm{x}^k,{s^{k}(\cdot),z^{k}(\cdot)})\}_{k\in \Ze_+}$ generated by the $\AM$ \Cref{alg_am} might not converge. If this case occurs, we {choose} a convergent subsequence of $\{(\bm{x}^k,{s^{k}(\cdot),z^{k}(\cdot)})\}_{k\in \Ze_+}$ and its accumulative point as the output.

	\subsection{$\textbf{AM}$ Method is Better Than Difference-of-Convex ($\textbf{DC}$) Approach}
	\label{better_than_DC}
	This subsection compares the $\AM$ method with the well-known difference-of-convex ($\DC$) approach to solve the lower-level problem \eqref{alsox_bilnearb}, and shows that the $\AM$ method provides a better-quality solution than that of the $\DC$ approach.
	
	{We first apply the well-known difference-of-convex ($\DC$) approach (see, e.g., section 2 of \citealt{tao1997convex})
		to solve the lower-level problem \eqref{alsox_bilnearb}. Note that this DC approach is different compared to the one studied in \citet{hong2011sequential}, where the latter directly approximated the chance constraint using difference-of-convex functions.} Observe that the objective function in \eqref{alsox_bilnearb} can be rewritten as the difference of two convex quadratic functions: 
	\begin{align*}
	\E\left[{z}(\tilde{\rxi}){s}(\tilde{\rxi}) \right]=\frac{1}{4}\E\left[\left({z}(\tilde{\rxi})+{s}(\tilde{\rxi})\right)^2 - \left({z}(\tilde{\rxi})-{s}(\tilde{\rxi})\right)^2 \right].
	\end{align*}
	Next, the $\DC$ approach proceeds as follows. At iteration $k+1$, we replace $({z}({\rxi})-{s}({\rxi}))^2 $ by its first order Taylor approximation using the solution from the previous iteration, i.e.,
	\begin{align*}
	\left({z}({\rxi})-{s}({\rxi})\right)^2 \approx\left({z}^k({\rxi})-{s}^k({\rxi})\right)^2+2{z}^k({\rxi})\left({z}({\rxi})-{z}^k({\rxi})\right)-2{s}^k({\rxi})\left({s}({\rxi})-{s}^k({\rxi})\right),
	\end{align*}
	and solve the following convex program:
	\begin{equation}\label{basic_dc_1}
	\begin{aligned}
	\left(\bm{x}^{k+1},{s^{k+1}(\cdot),z^{k+1}(\cdot)}\right) \in \argmin _{ \bm{x}\in \mathcal{X},{s(\cdot),z({\cdot})}} &\biggl\{ \frac{1}{4}\E\left[\left({z}(\tilde{\rxi})+{s}(\tilde{\rxi})\right)^2 \right]-\frac{1}{4}\E\left[\left({z}^k(\tilde{\rxi})-{s}^k(\tilde{\rxi})\right)^2\right]\\
	&-\frac{1}{4}\E\left[2{z}^k(\tilde{\rxi})\left({z}(\tilde{\rxi})-{z}^k(\tilde{\rxi})\right)-2{s}^k(\tilde{\rxi})\left({s}(\tilde{\rxi})-{s}^k(\tilde{\rxi})\right)\right]\colon\\
	& \bm{c}^\top \bm{x} \leq t, g(\bm x,\tilde{\bm {\xi}})\leq {s}(\tilde{\rxi}), {s}(\tilde{\rxi})\geq 0, {z}(\tilde{\rxi})\in [0,1]\biggr\}.
	\end{aligned}
	\end{equation}
	And repeat this process until the objective functions $\{\E[z^{k}(\tilde{\rxi})s^{k}(\tilde{\rxi})]\}_{k\in \Ze_+}$ converge.
	
	Since both $\AM$ method and $\DC$ approach {find} a stationary point, the formal comparison between the $\AM$ method and the $\DC$ approach relies on their stationary conditions. Particularly, the $\AM$ method generates a stationary point {$(\bm{x}^{\AM},{s}^{\AM}(\cdot),{z}^{\AM}(\cdot)))$} that solves problem \eqref{basic_ap_z} and \eqref{basic_ap_1} when {$(\bm{x}^{k+1},{s}^{k+1}(\cdot))=(\bm{x}^{\AM},{s}^{\AM}(\cdot))$ and ${z}^k(\cdot)={z}^{\AM}(\cdot)$} if and only if its satisfies the following stationary condition:
	\begin{equation}
	\label{alsox_condition_1}
	\begin{aligned}
	&	{	\E\left[{s}^{\AM}(\cdot)\left[{z}(\cdot) - {z}^{\AM}(\cdot)\right] \right]\geq 0, 	\E\left[{z}^{\AM}(\cdot)\left[{s}(\cdot)- {s}^{\AM}(\cdot) \right] \right ]\geq 0,}
	\\
	&\forall \left(\bm x,{s(\cdot), z(\cdot)}\right) \text{ satisfies the constraints in the lower-level problem \eqref{alsox_bilnearb}}.
	\end{aligned}
	\end{equation}
	On the other hand, the $\DC$ method generates a stationary point {$(\bm{x}^{\DC},{s}^{\DC}(\cdot),{z}^{\DC}(\cdot))$} that solves \eqref{basic_dc_1} when {$(\bm{x}^k,{s}^k(\cdot),{z}^k(\cdot))=(\bm{x}^{\DC},{s}^{\DC}(\cdot),{z}^{\DC}(\cdot))$} if and only if its satisfies the following stationary condition:
	\begin{equation}
	\label{alsox_condition_2}
	\begin{aligned}
	&	{\E\left[{s}^{\DC}(\cdot)\left[{z}(\cdot) - {z}^{\DC}(\cdot)\right] \right]+\E\left[{z}^{\DC}(\cdot)\left[{s}(\cdot)- {s}^{\DC}(\cdot) \right] \right ]}\geq 0, \\
	&\forall \left(\bm x,{s(\cdot), z(\cdot)}\right) \text{ satisfies the constraints in the lower-level problem \eqref{alsox_bilnearb}}.
	\end{aligned}
	\end{equation}
	
	Note that the set of the stationary points satisfying the condition \eqref{alsox_condition_1} of the $\AM$ method is contained in that satisfying the condition \eqref{alsox_condition_2} of the $\DC$ approach. This concludes that the $\AM$ method is better than the $\DC$ approach.
	\begin{restatable}{proposition}{amdc}\label{am_dc} 
		Given $t$, when solving the lower-level problem \eqref{alsox_bilnearb}, the $\AM$ method can find a better solution than that of $\DC$ approach.
	\end{restatable}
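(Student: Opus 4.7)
The plan is to show that every stationary point of the $\AM$ method is also a stationary point of the $\DC$ approach and then exploit the descent property shared by both algorithms. First, I would derive the stationarity condition \eqref{alsox_condition_1} for the $\AM$ method. At a fixed point $(\bm{x}^{\AM}, s^{\AM}(\cdot), z^{\AM}(\cdot))$, the first-order optimality of the convex subproblem \eqref{basic_ap_z} (in $z(\cdot)$, with $s^{\AM}(\cdot)$ frozen) yields $\E[s^{\AM}(\tilde{\bm\xi})(z(\tilde{\bm\xi}) - z^{\AM}(\tilde{\bm\xi}))] \geq 0$ over all feasible $z(\cdot)$; symmetrically, the optimality of \eqref{basic_ap_1} (in $(\bm x, s(\cdot))$, with $z^{\AM}(\cdot)$ frozen) gives $\E[z^{\AM}(\tilde{\bm\xi})(s(\tilde{\bm\xi}) - s^{\AM}(\tilde{\bm\xi}))] \geq 0$ over all feasible $(\bm x, s(\cdot))$.

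Second, for the $\DC$ approach, I would compute the gradient of the linearized objective in \eqref{basic_dc_1} at a fixed point $(\bm{x}^{\DC}, s^{\DC}(\cdot), z^{\DC}(\cdot))$. Using the Taylor expansion of $(z - s)^2$ around $(z^{\DC}, s^{\DC})$, the derivative with respect to $z(\tilde{\bm\xi})$ evaluates to $s^{\DC}(\tilde{\bm\xi})$ and that with respect to $s(\tilde{\bm\xi})$ evaluates to $z^{\DC}(\tilde{\bm\xi})$; since \eqref{basic_dc_1} is a convex program, the first-order optimality at this point produces precisely \eqref{alsox_condition_2}.

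Third, adding the two inequalities in \eqref{alsox_condition_1} immediately yields \eqref{alsox_condition_2}, so every $\AM$ stationary point is also a $\DC$ stationary point, i.e., the $\AM$ stationary set is contained in the $\DC$ stationary set. Both algorithms are monotone descent procedures on the bilinear objective $\E[z(\tilde{\bm\xi}) s(\tilde{\bm\xi})]$ (the $\AM$ method by \Cref{alsox_convergence}, and the $\DC$ approach because each linearized subproblem majorizes the original objective at the linearization point). Hence, taking any limit point of $\DC$ iterates and initializing the $\AM$ method there, $\AM$ either terminates immediately---so the $\DC$ output is already $\AM$ stationary and the objectives coincide---or it strictly decreases the objective via one of the subproblems \eqref{basic_ap_z} or \eqref{basic_ap_1}. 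In either case the final $\AM$ objective is no larger than the $\DC$ objective.

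The main obstacle I anticipate is Step 2: the gradient calculation for the $\DC$-linearized objective must be carried out so that it produces the clean pointwise multipliers $s^{\DC}(\tilde{\bm\xi})$ and $z^{\DC}(\tilde{\bm\xi})$ matching \eqref{alsox_condition_2}. The remainder is bookkeeping on the containment of stationary sets together with the monotone-descent property common to both algorithms.
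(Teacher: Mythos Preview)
Your proposal is correct and follows essentially the same approach as the paper: the paper's entire argument is the observation in the text preceding the proposition that adding the two $\AM$ variational inequalities \eqref{alsox_condition_1} yields the single $\DC$ inequality \eqref{alsox_condition_2}, so the $\AM$ stationary set is contained in the $\DC$ stationary set. Your Step~4 (initialize $\AM$ at a $\DC$ limit point and use monotone descent) makes explicit what the paper leaves implicit in the phrase ``can find a better solution'' together with its convention that ``Better Than'' means ``at least as good as.''
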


	Then following example demonstrates that the solution from $\AM$ method can be indeed strictly better than that of the $\DC$ approach.
	
	
	\begin{example}
		\rm
		\label{am_dc_example_1}
		\begin{subequations}
			Let us revisit \Cref{am_dc_example}. Given $t=0.5$, then the lower-level problem \eqref{alsox_bilnearb} admits the following form
			\begin{align}
			\label{am_dc_example_am}
			\min_{\bm{x}\in\Re_+^2,\bm{s}\in\Re_+^3,\bm{z}}\left\{ \frac{1}{3}\sum_{i\in[3]}z_is_i\colon 
			\begin{array}{l}
			\displaystyle	2x_1+3x_2\geq 1-s_1, 2x_1+x_2\geq 1-s_2
			x_1+2x_2\geq 1-s_3, \\
			\displaystyle {\sum_{i\in[3]}z_i}\geq 2,\bm{z}\in [0,1]^3, x_1+x_2\leq 0.5 \end{array}
			\right\}.
			\end{align}
			When running $\AM$ \Cref{alg_am} with an initial solution $\bm{z}^0=[1,1,1]$,
			the stationary solution is $\bm{z}^\AM=[1,0,1]$, $ \bm{s}^{\AM}=[0,0.5,0]$, $\bm{x}^{\AM}=[0,0.5]$. 
			%
			When using $\DC$ approach with an initial solution $\bm{z}^0=[1,1,1]$, $\bm{s}^0=[1,1,1]$, the stationary solution is $\bm{z}^{\DC}=[0,1,1]$, $\bm{s}^{\DC}=[0,0.25,0.25]$, $\bm{x}^{\DC}=[0.25,0.25]$.
			More importantly, since $\sum_{i\in[3]}	\I(s^{\AM}_i=0)=2 \geq 2$ and $\sum_{i\in[3]}	\I(s^{\DC}_i=0)=1<2$, the $\AM$ method is able to find a feasible solution to the CCP for this example, while the $\DC$ approach is unable to.
		\end{subequations}
		\QEDB
	\end{example}

	\subsection{$\alsoxt +$ Algorithm}
	\label{alsox_am}
	This subsection integrates $\alsox$ \Cref{alg_alsox} with the $\AM$ \Cref{alg_am} as $\alsox+$ algorithm, to improve the performance of $\alsox$ \Cref{alg_alsox}. In $\alsox+$ algorithm, we first execute $\alsox$ \Cref{alg_alsox}, and when Step 4 of $\alsox$ \Cref{alg_alsox} encounters an infeasible solution (i.e., the optimal solution $(\bm{x}^*,{s^*(\cdot)})$ of the hinge-loss approximation \eqref{eq_also_x_2b} violates the chance constraint $\Pr\{\tilde{\bm\xi} \colon{{s}^*}(\tilde{\rxi})>0\} >\varepsilon$), then we run the $\AM$ \Cref{alg_am} with the same $t$ and see if we are able to find a feasible solution. If YES, we {further} decrease $t_U=t$; otherwise, we {increase} $t_L=t$.
	The detailed procedure for the $\alsox+$ algorithm is shown in \Cref{alg_alsox_+}.
	
	\begin{algorithm}[htbp]
		\caption{The Proposed $\alsox+$ Algorithm}
		\label{alg_alsox_+}
		\begin{algorithmic}[1]
			\State \textbf{Input:} Let $\delta_1$ denote the stopping tolerance parameter, $t_L$ and $t_U$ be the known lower and upper bounds of the optimal value of CCP \eqref{eq_ccp}, respectively 
			\While {$t_U-t_L>\delta_1$}
			\State	Let $t=(t_L+t_U)/2$ and $(\bm{x}^*,{s(\cdot)})$ be an optimal solution of the hinge-loss approximation \eqref{eq_also_x_2b}
			\State Let $t_U=t$ if $\Pr\{\tilde{\bm\xi} \colon {{s}^*}(\tilde{\rxi})>0\} \leq 1-\varepsilon$; otherwise, run the $\AM$ \Cref{alg_am}. If the solution output from the $\AM$ \Cref{alg_am} is feasible to the CCP, let $t_U=t$; otherwise, $t_L=t$
			\EndWhile
			\State \textbf{Output:} A feasible solution $\x^*$ {and its objective value} to CCP \eqref{eq_ccp}
		\end{algorithmic}
	\end{algorithm}
	
	We make the following remarks about $\alsox +$ \Cref{alg_alsox_+}.
	\begin{enumerate} [label=(\roman*)]
		\item We can use the solutions of the hinge-loss approximation \eqref{eq_also_x_2b} as warm-starts for the $\AM$ \Cref{alg_am};
		\item For the linear CCP, i.e., $g_i(\bm x,{\rxi}) ={\rxi}^\top\bm a_i(\bm x)-b_i(\bm x)$ with affine functions $\bm a_i(\bm x),b_i(\bm x)$ and set $\mathcal{X}$ is a polyhedron. Similar to $\alsox$ \Cref{alg_alsox}, we can use parametric linear programming \citep{adler1992geometric} to decrease the number of bisection needed. That is, after Step 3, we can record the lower and upper bounds of the allowable range of the value $t$. Then at Step 4,
		if the current solution is not feasible to CCP \eqref{eq_ccp}, we can update $t$ to be the upper bound of its allowable range; otherwise, we can let $t$ be equal to the lower bound of the allowable range. We then continue the binary search procedure;
		\item Inherited from $\alsox$ \Cref{alg_alsox}, $\alsox +$ \Cref{alg_alsox_+} always provides a better solution than that of the $\CVaR$ approximation given that Assumptions \ref{A_1}-\ref{A_2} hold and the tolerance $\delta_1=0$; and
		\item For the nonconvex set $\X$, \Cref{cvar_better_nonconvex}  can be further used to demonstrate that
		the 	$\CVaR$ approximation can also outperform $\alsox +$ \Cref{alg_alsox_+}.
		That is, in \Cref{cvar_better_nonconvex}, the solution output from $\alsox +$ \Cref{alg_alsox_+} is the same as that from $\alsox$, which is not optimal, while the $\CVaR$ approximation provides the optimal solution.
	\end{enumerate}

	Besides, incorporating $\AM$ \Cref{alg_am} in $\alsox +$ \Cref{alg_alsox_+} helps break the symmetry in the hinge-loss approximation \eqref{eq_also_x_2b} by assigning different weights to the violations of uncertain constraints. Specifically, in $\AM$ \Cref{alg_am}, when fixing ${z(\cdot)=z^{k}(\cdot)}$ with $\E[z^{k}(\tilde{\rxi})]=1-\varepsilon$, the problem \eqref{basic_ap_1} tends to focus on the $1-\varepsilon$ portion of uncertain constraints rather than using all of them in the hinge-loss approximation \eqref{eq_also_x_2b}. {In the following example, we show that due to the symmetry of the random parameters, $\alsox$ \eqref{eq_also_x_2} is unable to provide an optimal solution, while $\alsox +$ \Cref{alg_alsox_+} with the tolerance $\delta_1=0$ can.} 
	
	\begin{example}
		\rm
		\label{am_better_dc_example}
		Let us revisit \Cref{am_dc_example}. Suppose that $t=0.6$, then the optimal solution provided by $\alsox$ \eqref{eq_also_x_2} is $x_1^*=x_2^*=0.3$, $s_1^*=0$, $s_2^*=s_3^*=0.1$, which violates the chance constraint. Invoking the $\AM$ \Cref{alg_am} with initial $s^0_i=s_i^*$ for each $i\in [3]$, at the second iteration of the $\AM$ \Cref{alg_am}, we {find} an optimal solution $x^2_1=0.4, x^2_2=0.2$, $ s^2_1=s^2_2=0$, $s^2_3=0.2$, and $z^2_1=z^2_2=1$, $z^2_3=0$ to the lower-level problem \eqref{alsox_bilnearb} with $t=0.6$. 
		Thus, if $t=0.6$, $\alsox +$ \Cref{alg_alsox_+} {further} reduce the $t_U=t=0.6$. In fact, in this example, $\alsox +$ \Cref{alg_alsox_+} with the tolerance $\delta_1=0$ {finds} the optimal solution of the CCP.
		\QEDB
	\end{example}

	Although improving $\alsox$ \eqref{eq_also_x_2}, $\alsox +$ \Cref{alg_alsox_+} might not always be able to find an optimal solution of CCP \eqref{eq_ccp}, as illustrated in \Cref{alsox_not_exact_example} of Appendix~\ref{sec_append_example}.
	{Interested readers are referred to Appendix~\ref{Illustration_section} for an illustration of comparisons among $\alsox$, $\CVaR$ approximation, and $\alsox+$ Algorithm.}
	
	\section{Extension to Distributionally Robust Chance Constrained Programs (DRCCP) with Wasserstein Distance}
	\label{drccp}
	In practice, the distributional information of random parameters $\tilde{\bm\xi}$ might not be fully known, making it difficult to commit to a single known probability distribution $\Pr$. Under this circumstance, to hedge against distributional ambiguity, we consider the distributionally robust chance constrained programs (DRCCPs), which require the chance constraint to be satisfied for all the probability distributions from a family of distributions, termed ``ambiguity set." That is, {following many recent works in DRCCP \citep{xie2020bicriteria,xie2019distributionally,chen2018data,ji2020data,chen2019sharing},} we consider the DRCCP of the form
	\begin{equation}
	\label{eq_drccp_org}
	\min _{\bm {x}\in \mathcal{X}} \left\{\bm{c}^\top\bm{x}\colon \inf_{\Pr\in \mathcal{P}} \Pr\left\{\tilde{\bm\xi}\colon g(\bm x,\tilde{\bm\xi})\leq 0\right\} \geq1-\varepsilon \right\},
	\end{equation}
	where ambiguity set $ \mathcal{P}$ denotes a subset of probability distributions $\Pr$ defined on ${(\Omega,\mathcal{F})}$ and induced by the random parameters $\tilde{\rxi}$, and risk level $\varepsilon \in (0,1)$. {Specifically, we study the DRCCP under $\infty-$Wasserstein ambiguity set (see, e.g., \citealt{bertsimas2018data,xie2020tractable})}. The $q-$Wasserstein ambiguity set is defined as
	\begin{equation*}
	\mathcal{P}_q^W =\left\{ \Pr\colon\Pr\left\{ \tilde{\rxi}\in {\Xi}\right\}=1,W_q(\Pr,\Pr_{\tilde\rzeta})\leq \theta \right\},
	\end{equation*}
	where for any $q\in[1,\infty]$, the $q-$Wasserstein distance is defined as 
	\begin{equation*}
	W_q(\Pr_1,\Pr_2)=\inf\left\{\left[ \int_{{\Xi}\times{\Xi}}\left\|\bm{\xi}_1-\bm{\xi}_2\right\|^q\mathbb{Q}(d\bm{\xi}_1,d\bm{\xi}_2)\right ]^\frac{1}{q}\colon
	\begin{aligned}
	& \mathbb{Q} \text{ is a joint distribution of } \tilde{\rxi}_1 \text{ and } \tilde{\rxi}_2\\
	& \text{ with marginals }\Pr_1 \text{ and } \Pr_2, \text{ respectively }
	\end{aligned}
	\right\},
	\end{equation*}
	{where $\theta\geq0$ is the Wasserstein radius, and $\Pr_{\trzeta}$ denotes the reference distribution induced by random parameters $\trzeta$. For example, $\Pr_{\trzeta}$ can be an empirical distribution with $\trzeta$ being a uniform discrete random vector.}
	Note that if $q=\infty$, the $\infty-$Wasserstein  distance is reduced to
	\begin{equation*}
	W_\infty(\Pr_1,\Pr_2)=\inf\left\{ \rm{ess.sup}\left\|\bm{\xi}_1-\bm{\xi}_2\right\|\mathbb{Q}(d\bm{\xi}_1,d\bm{\xi}_2)\colon
	\begin{aligned}
	& \mathbb{Q} \text{ is a joint distribution of } \tilde{\rxi}_1 \text{ and } \tilde{\rxi}_2\\
	& \text{ with marginals }\Pr_1 \text{ and } \Pr_2, \text{ respectively }
	\end{aligned}
	\right\}.
	\end{equation*}
	%
	
	Throughout this section, we assume that
	\begin{enumerate}[label={A\arabic*}]
		\setcounter{enumi}{2}
		\item \label{A_4} The reference distribution $\Pr_{\tilde{\rzeta}}$ is sub-Gaussian, that is, $\Pr_{\tilde{\rzeta}}\{\tilde{\rzeta}: \|\tilde{\rzeta}\|\geq t\}\leq C_1\exp(-C_2t^2)$ for some positive constants $C_1,C_2$.
	\end{enumerate}
	It is worthy of noting that 
	the sub-Gaussian assumption ensures the weak compactness of $\infty-$Wasserstein ambiguity set and thus ensures the strong duality of reformulating the worst-case expectation under $\infty-$Wasserstein ambiguity set. Particularly, this paper mainly focuses on empirical or elliptical reference distributions, which clearly satisfy Assumption~\ref{A_4}.
	
	Under this setting, DRCCP \eqref{eq_drccp_org} can be written as
	\begin{equation}
	\label{eq_drccp}
	v_\infty^* :=\min _{\bm {x}\in \mathcal{X}} \left\{\bm{c}^\top\bm{x}\colon \inf_{\Pr\in \mathcal{P}_\infty^W} \Pr\left\{\tilde{\bm\xi}\colon g_i(\bm x,\tilde{\bm\xi})\leq 0, \forall i\in[I]\right\} \geq 1-\varepsilon \right\}.
	\end{equation}
	{It turns out that DRCCP \eqref{eq_drccp} admits a neat equivalent representation.}
	{\begin{restatable}{proposition}{theinfinitydrccp}\label{the_infinity_drccp} 
			Under $\infty-$Wasserstein ambiguity set, DRCCP  \eqref{eq_drccp} is equivalent to 
			\begin{align}
			v_\infty^*  =\min _{\bm {x}\in \mathcal{X}} \left\{\bm{c}^\top\bm{x}\colon
			\Pr_{\tilde\rzeta}\left\{{\tilde\rzeta}\colon \bar g_i(\bm x,\trzeta)\leq 0, \forall i \in [I] \right\}\geq 1-\varepsilon \right\},
			\label{eq_drccp_infty}
			\end{align}
			where the convex and lower semi-continuous function $\bar g_i:\Re^n\times\Xi\to\Re$ is defined as $\bar g_i(\bm x,\rzeta):=\max_{{\rxi}}\{g_i(\bm x,\rxi)\colon \|{\rxi}-\rzeta\|\leq \theta\}$  for each $i\in[I]$.
		\end{restatable}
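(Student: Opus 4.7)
The plan is to exploit the coupling characterization of the $\infty-$Wasserstein distance in order to eliminate the inner worst-case probability. Any $\Pr\in \mathcal{P}_\infty^W$ can be realized as the first marginal of a joint distribution $\mathbb{Q}$ of $(\tilde{\rxi},\tilde{\rzeta})$ whose second marginal is $\Pr_{\tilde{\rzeta}}$ and under which $\|\tilde{\rxi}-\tilde{\rzeta}\|\leq \theta$ almost surely, and conversely, every such coupling produces a feasible $\Pr$. Thus the inner infimum reduces to an infimum over couplings satisfying this pointwise norm constraint, which allows a scenario-by-scenario analysis.

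First, I would establish the lower bound $\inf_{\Pr\in\mathcal{P}_\infty^W}\Pr\{g(\bm x,\tilde{\rxi})\leq 0\}\geq \Pr_{\tilde{\rzeta}}\{\bar g(\bm x,\tilde{\rzeta})\leq 0\}$, where $\bar g(\bm x,\rzeta):=\sup_{\rxi:\,\|\rxi-\rzeta\|\leq \theta}g(\bm x,\rxi)$. For any admissible coupling, on the event $\{\bar g(\bm x,\tilde{\rzeta})\leq 0\}$ every $\rxi$ within $\theta$ of $\tilde{\rzeta}$ must satisfy $g(\bm x,\rxi)\leq 0$; since $\|\tilde{\rxi}-\tilde{\rzeta}\|\leq\theta$ almost surely, we get the almost-sure set inclusion $\{\bar g(\bm x,\tilde{\rzeta})\leq 0\}\subseteq\{g(\bm x,\tilde{\rxi})\leq 0\}$, and the bound follows by taking $\mathbb{Q}$-probabilities and noting the marginals.

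Next, I would establish the matching upper bound by constructing an explicit worst-case coupling. For each $\rzeta$ with $\bar g(\bm x,\rzeta)>0$, the definition of $\bar g$ guarantees a point $\rxi^\star(\rzeta)$ with $\|\rxi^\star(\rzeta)-\rzeta\|\leq\theta$ and $g(\bm x,\rxi^\star(\rzeta))>0$; for all other $\rzeta$, set $\rxi^\star(\rzeta)=\rzeta$. A measurable selection argument (e.g., Kuratowski--Ryll-Nardzewski, applied to the correspondence $\rzeta\mapsto\{\rxi:\|\rxi-\rzeta\|\leq\theta,\ g(\bm x,\rxi)>0\}$ after passing to a closed-valued $\epsilon$-slack if needed) produces a measurable $\rxi^\star(\cdot)$, so that the pushforward $\Pr^\star:=\Pr_{\tilde{\rzeta}}\circ(\rxi^\star)^{-1}$ lies in $\mathcal{P}_\infty^W$ via the obvious coupling, and satisfies $\Pr^\star\{g(\bm x,\tilde{\rxi})\leq 0\}= \Pr_{\tilde{\rzeta}}\{\bar g(\bm x,\tilde{\rzeta})\leq 0\}$. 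Combining the two directions,
\begin{equation*}
\inf_{\Pr\in\mathcal{P}_\infty^W}\Pr\{\tilde{\rxi}\colon g(\bm x,\tilde{\rxi})\leq 0\}=\Pr_{\tilde{\rzeta}}\{\tilde{\rzeta}\colon \bar g(\bm x,\tilde{\rzeta})\leq 0\}.
\end{equation*}

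Finally, since $g=\max_{i\in[I]}g_i$ and the supremum over $\rxi$ in the $\theta$-ball commutes with the finite maximum over $i$, we obtain $\bar g(\bm x,\rzeta)=\max_{i\in[I]}\bar g_i(\bm x,\rzeta)$. Hence $\{\bar g(\bm x,\rzeta)\leq 0\} = \{\bar g_i(\bm x,\rzeta)\leq 0,\ \forall i\in[I]\}$, which, when substituted into the outer minimization over $\bm x\in\X$, yields \eqref{eq_drccp_infty}. Convexity and lower semi-continuity of $\bar g_i(\cdot,\rzeta)$ in $\bm x$ are immediate from Assumption~\ref{A_1} since each $\bar g_i(\cdot,\rzeta)$ is the pointwise supremum of the convex, lower semi-continuous family $\{g_i(\cdot,\rxi)\}_{\|\rxi-\rzeta\|\leq\theta}$. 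The main obstacle is the measurable-selection step in the upper bound: one must ensure that the correspondence above admits a measurable graph (using Assumption~\ref{A_1} and the sub-Gaussianity in Assumption~\ref{A_4} for weak compactness of $\mathcal{P}_\infty^W$) so that an honest selector exists rather than only an $\epsilon$-approximate one; this is the only place where regularity of $g$ in $\rxi$ truly enters the proof.
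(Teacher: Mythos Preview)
Your argument is correct and takes a genuinely different route from the paper. The paper first establishes a general duality identity (its Claim~1): for any measurable $f$, $\sup_{\Pr\in\mathcal{P}_\infty^W}\E_\Pr[f(\tilde{\rxi})]=\E_{\Pr_{\tilde{\rzeta}}}[\sup_{\|\rxi-\tilde{\rzeta}\|\leq\theta}f(\rxi)]$, proved by invoking the $q$-Wasserstein strong duality of Gao--Kleywegt / Blanchet--Murthy, interchanging min and sup to get a lower bound, and then letting $q\to\infty$ using monotonicity of $W_q$ in $q$; the upper bound is obtained, as in your proof, by pushing forward along a near-maximizer selector. The proposition then follows by specializing $f$ to the indicator $\I(g(\bm x,\cdot)\leq 0)$. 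By contrast, you bypass the $q$-duality literature entirely and work directly with the coupling characterization of $W_\infty$: your lower bound is a clean set-inclusion argument under any admissible coupling, and your upper bound is the same selector construction. Your approach is more elementary and self-contained, and it isolates exactly where regularity in $\rxi$ is needed (the measurable selection), whereas the paper's proof hides this step inside the line ``define $\tilde{\rxi}\in\arg\max_{\rxi}\{f(\rxi):\|\rxi-\tilde{\rzeta}\|\leq\theta\}$.'' The paper's route, however, buys reusability: its Claim~1 is stated for arbitrary $f$ and is invoked again verbatim to reformulate the worst-case hinge-loss and worst-case $\CVaR$ objectives in the subsequent proposition, so if you intend to prove those results as well, it may be worth packaging your coupling argument at that level of generality.
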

		\proof
		See Appendix~\ref{proof_the_infinity_drccp}.
		\QEDA}
	
	{The reformulation in \Cref{the_infinity_drccp} implies that  DRCCP \eqref{eq_drccp} under $\infty-$Wasserstein ambiguity set is equivalent to a regular CCP \eqref{eq_drccp_infty}.
		In fact, we anticipate that  the worst-case $\alsox$ and the worst-case $\CVaR$ approximation under	$\infty-$Wasserstein ambiguity set are equivalent to directly applying $\alsox$ and  $\CVaR$ approximation to solve CCP \eqref{eq_drccp_infty}. This observation motivates us to show that under $\infty-$Wasserstein ambiguity set, the worst-case $\alsox$ is better than the worst-case $\CVaR$ approximation, which {is} detailed in the next subsection.}

	
	\subsection{The Worst-case $\alsoxt$ Outperforms the Worst-case $\CVaRt$ Approximation}
	{In this subsection, we introduce the notions of the worst-case $\alsox$ and the worst-case $\CVaR$ approximation, and then demonstrate that the worst-case $\alsox$ always outperforms the worst-case $\CVaR$ approximation under $\infty-$Wasserstein ambiguity set.} 
	
	{
		Similar to $\alsox$ \eqref{eq_also_x_2}, we derive the worst-case $\alsox$ counterpart under $\infty-$Wasserstein ambiguity set. 
		That is, in the worst-case $\alsox$, we first solve the worst-case hinge-loss approximation, which is to minimize the least-favorable expectation of function $\max_{i\in [I]}g_i(\bm x,\tilde{\bm\xi})_+$, i.e., the objective function of \eqref{drccp_alsox} is to minimize the worst-case objective function of the hinge-loss approximation \eqref{eq_also_x_2b},  and check if its optimal solution $\bm{x}^*$ satisfies the distributionally robust chance constraint in \eqref{eq_drccp} or not.}
	If the answer is YES, we {continue} reducing the upper bound of the objective value $t$, and otherwise, we {increase} $t$. This procedure can be formally formulated as
	\begin{subequations}
		\label{drccp_alsox_formulation}
		\begin{align}
		v_\infty^A =\min _{ {t}}\quad & t,\\
		\text{s.t.}\quad&\bm x^*\in\argmin _{\bm {x}\in \mathcal{X}}\, \sup _{\Pr\in\mathcal{P}_\infty^W} \left\{ \E_\Pr\left[\max_{i\in [I]}g_i(\bm x,\tilde{\bm\xi})_+\right]\colon \bm{c}^\top \bm{x} \leq t \right\},\label{drccp_alsox}\\
		&  \inf_{\Pr\in \mathcal{P}_\infty^W} \Pr\left\{\tilde{\bm\xi}\colon g_i(\bm x^*,\tilde{\bm\xi})\leq 0, \forall i\in[I]\right\} \geq 1-\varepsilon.\label{alsox_drccp_formualtion}
		\end{align}
	\end{subequations} 
	{For DRCCP \eqref{eq_drccp}, the worst-case $\CVaR$ approximation is defined as}
	\begin{equation}
	\label{worse_case_cvar_q}
	v_\infty^\CVaR=\min _{\bm {x}\in \mathcal{X}}\left\{\bm c^\top\bm x\colon \sup _{{\Pr}\in\mathcal{P}_\infty^W} \inf_{\beta}\left[ \beta+ \frac{1}{\varepsilon}\E_{\Pr}\left[ \max_{i\in[I]}\left(  g_i(\bm x,\tilde{\bm\xi})-\beta \right)_+ \right] \right] \leq 0 \right\}.
	\end{equation}
	{The next proposition formally derives the equivalent reformations of the worst-case $\alsox$ \eqref{drccp_alsox_formulation} and  the worst-case $\CVaR$ approximation \eqref{worse_case_cvar_q}, respectively.}
	{\begin{restatable}{proposition}{theinfinityalsox}\label{the_infinity_alsox} 
			Under $\infty-$Wasserstein ambiguity set, we have
			\begin{enumerate}[label=(\roman*)]
				\item the worst-case $\alsox$ \eqref{drccp_alsox_formulation} is equivalent to 
				\begin{align}
				v_\infty^A =\min _{ {t}}\quad & t\nonumber,\\
				\textup{s.t.}\quad&\bm x^*\in\argmin _{\bm {x}\in \mathcal{X}}\left\{ \E_\Pr\left[\max_{i\in [I]}\bar g_i(\bm x,\tilde{\bm\xi})_+\right]\colon \bm{c}^\top \bm{x} \leq t \right\}, \label{drccp_alsox_infity_eq} \\
				& 	\Pr_{\tilde\rzeta}\left\{{\tilde\rzeta}\colon \bar g_i(\bm x^*,\trzeta)\leq 0, \forall i \in [I] \right\}\geq 1-\varepsilon; \nonumber
				\end{align}
				\item the worst-case $\CVaR$ approximation \eqref{worse_case_cvar_q} is equivalent to  
				\begin{align}
				v_\infty^\CVaR=\min _{\bm {x}\in \mathcal{X}}\left\{\bm c^\top\bm x\colon \min_{\beta}\left[ \beta+\frac{1}{\varepsilon} \E_{\Pr_{\trzeta}}\left\{\max_{i\in[I]}	  \left\{\bar g_i(\bm x,\trzeta)\right\}-\beta\right\}_+\right]\leq 0 \right\}.
				\label{drccp_cvar_infity_eq} 
				\end{align}
			\end{enumerate}
		\end{restatable}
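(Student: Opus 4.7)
\proof[Proof Proposal.]
The plan is to leverage the same worst-case expectation identity that underlies \Cref{the_infinity_drccp}: under the $\infty-$Wasserstein ambiguity set with a sub-Gaussian reference distribution (Assumption~\ref{A_4}), for any Borel-measurable function $\phi:\Xi\to\Re$ bounded below, one has
\begin{equation*}
\sup_{\Pr\in\mathcal{P}_\infty^W}\E_\Pr\left[\phi(\trxi)\right]=\E_{\Pr_{\trzeta}}\left[\sup_{\rxi:\|\rxi-\trzeta\|\le\theta}\phi(\rxi)\right].
\end{equation*}
This identity is precisely the duality result used in the proof of \Cref{the_infinity_drccp}; the sub-Gaussian tail bound guarantees weak compactness of $\mathcal{P}_\infty^W$ so that the supremum is attained and strong duality holds. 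I will appeal to it directly here as the workhorse.

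For part (i), I will apply the identity above with $\phi(\bm\xi)=\max_{i\in[I]}g_i(\bm x,\bm\xi)_+$. Since $g_i(\bm x,\cdot)$ is Borel-measurable for each $i$, so is $\phi$. Then the pointwise inner supremum satisfies
\begin{equation*}
\sup_{\rxi:\|\rxi-\trzeta\|\le\theta}\max_{i\in[I]}g_i(\bm x,\rxi)_+=\max_{i\in[I]}\sup_{\rxi:\|\rxi-\trzeta\|\le\theta}g_i(\bm x,\rxi)_+=\max_{i\in[I]}\bar g_i(\bm x,\trzeta)_+,
\end{equation*}
where the first equality interchanges two suprema and the second uses monotonicity of $(\cdot)_+$ together with the definition $\bar g_i(\bm x,\rzeta)=\sup_{\|\rxi-\rzeta\|\le\theta}g_i(\bm x,\rxi)$. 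Substituting this into the inner objective of \eqref{drccp_alsox} yields the hinge-loss formulation in \eqref{drccp_alsox_infity_eq}. The feasibility-check constraint \eqref{alsox_drccp_formualtion} has already been rewritten in \Cref{the_infinity_drccp}, so the upper-level problem of the worst-case $\alsox$ reduces to that of $\alsox$ applied to CCP \eqref{eq_drccp_infty}, which completes part (i).

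For part (ii), the technical obstacle is the sup-inf interchange $\sup_{\Pr\in\mathcal{P}_\infty^W}\inf_\beta[\,\cdot\,]=\inf_\beta\sup_{\Pr\in\mathcal{P}_\infty^W}[\,\cdot\,]$. The inner objective $\beta+\frac{1}{\varepsilon}\E_\Pr[\max_{i\in[I]}(g_i(\bm x,\trxi)-\beta)_+]$ is linear (hence concave and upper semi-continuous) in $\Pr$ under the weak topology, and convex in $\beta$; moreover the effective domain of $\beta$ can be restricted to a bounded interval (using $\beta\in[-M,0]$ for some problem-dependent $M$, since values outside this range are clearly suboptimal) and $\mathcal{P}_\infty^W$ is weakly compact by Assumption~\ref{A_4}. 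A standard Sion-type minimax theorem therefore applies and justifies the interchange. After exchanging, I apply the $\infty-$Wasserstein duality identity to the function $\phi(\bm\xi)=\max_{i\in[I]}(g_i(\bm x,\bm\xi)-\beta)_+$ and use the same two push-in steps (max-of-max commutation and monotonicity of $(\cdot)_+$) to obtain
\begin{equation*}
\sup_{\Pr\in\mathcal{P}_\infty^W}\E_\Pr\left[\max_{i\in[I]}(g_i(\bm x,\trxi)-\beta)_+\right]=\E_{\Pr_{\trzeta}}\left[\bigl(\max_{i\in[I]}\bar g_i(\bm x,\trzeta)-\beta\bigr)_+\right].
\end{equation*}
Plugging this back, replacing the outer $\sup$ over $\Pr$ by $\inf$ over $\beta$, and noting the infimum is attained (so ``$\inf$'' becomes ``$\min$''), gives \eqref{drccp_cvar_infity_eq}, establishing part (ii). The main obstacle is verifying the regularity hypotheses for minimax exchange and the duality identity; both are guaranteed by Assumption~\ref{A_4} together with the convexity and lower semi-continuity of each $g_i(\bm x,\cdot)$ inherited from Assumption~\ref{A_1}.
\QEDA
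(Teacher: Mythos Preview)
Your proposal is correct and follows essentially the same route as the paper: both invoke the $\infty$-Wasserstein duality identity (the paper's Claim~\ref{claim1}) to replace the worst-case expectation by $\E_{\Pr_{\trzeta}}[\sup_{\|\rxi-\trzeta\|\le\theta}\phi(\rxi)]$, then commute the two maxima and push $(\cdot)_+$ through to arrive at $\bar g_i$; for part (ii) both justify the $\sup$–$\inf$ exchange via weak compactness of $\mathcal{P}_\infty^W$ (you cite a Sion-type theorem, the paper cites \citealt{terkelsen1972some}). One small slip: in your final sentence you appeal to ``convexity and lower semi-continuity of each $g_i(\bm x,\cdot)$ inherited from Assumption~\ref{A_1},'' but Assumption~\ref{A_1} asserts these properties in $\bm x$, not in $\bm\xi$; fortunately only measurability in $\bm\xi$ is needed for Claim~\ref{claim1}, so the argument still goes through.
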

		\proof
		See Appendix~\ref{proof_the_infinity_alsox}.
		\QEDA}
	
	
	{We remark that both the worst-case $\alsox$ \eqref{drccp_alsox_infity_eq} and the worst-case $\CVaR$ approximation \eqref{worse_case_cvar_q} under $\infty-$Wasserstein ambiguity set can be interpreted as applying $\alsox$ and $\CVaR$ approximation of the regular CCP \eqref{eq_drccp_infty}, respectively. 
		Therefore, the results in previous sections hold for DRCCP \eqref{eq_drccp_infty} including that we can simply apply $\alsox+$ to CCP \eqref{eq_drccp_infty}. More importantly,  following the spirit of \Cref{alsox_is_better}, we can conclude that the worst-case $\alsox$ is better than the worst-case $\CVaR$ approximation under $\infty-$Wasserstein ambiguity set. }

	\begin{restatable}{theorem}{theinfinitytheorem}\label{the_infinity} 
		For DRCCP with $\infty-$Wasserstein ambiguity set, the worst-case $\alsox$ outperforms the worst-case $\CVaR$ approximation.
	\end{restatable}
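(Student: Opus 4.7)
The plan is to reduce the statement to the already-established comparison between $\alsox$ and $\CVaR$ approximation for standard CCPs, namely \Cref{the1}. The key enabler is \Cref{the_infinity_alsox}, which rewrites both the worst-case $\alsox$ \eqref{drccp_alsox_formulation} and the worst-case $\CVaR$ approximation \eqref{worse_case_cvar_q} as the regular $\alsox$ \eqref{eq_also_x_2} and regular $\CVaR$ approximation \eqref{ccp_cvar} applied to the surrogate CCP \eqref{eq_drccp_infty}. The surrogate uses the robustified constraint functions $\bar g_i(\bm x,\rzeta):=\max_{\rxi}\{g_i(\bm x,\rxi)\colon \|\rxi-\rzeta\|\leq \theta\}$ in place of $g_i$, while keeping the decision set $\X$, cost vector $\bm c$, and risk level $\varepsilon$ unchanged.

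First, I would verify that the surrogate CCP \eqref{eq_drccp_infty} itself satisfies Assumptions \ref{A_1} and \ref{A_2}, so that \Cref{the1} is indeed applicable. Convexity of $\bar g_i$ in $\bm x$ follows because $\bar g_i$ is a supremum of a family of convex functions $\{g_i(\cdot,\rxi):\|\rxi-\rzeta\|\le \theta\}$, and lower semi-continuity in $\bm x$ is already asserted in \Cref{the_infinity_drccp}; hence Assumption \ref{A_1} carries over with $I$ unchanged. Assumption \ref{A_2} is immediate because the decision set $\X$ and the cone $\C$ are untouched by the reduction, and Assumption \ref{A_3} (nonemptiness of the feasible region and $\bm c\in\mathrm{int}(\C^*)\cup\{\bm 0\}$) is inherited from the original DRCCP via \Cref{the_infinity_drccp}.

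Second, with the assumptions in hand, I would invoke \Cref{the1} on the surrogate CCP: the optimal value $v^A_{\mathrm{surr}}$ of $\alsox$ applied to \eqref{eq_drccp_infty} satisfies $v^A_{\mathrm{surr}}\leq v^{\CVaR}_{\mathrm{surr}}$, where $v^{\CVaR}_{\mathrm{surr}}$ is the value of $\CVaR$ approximation applied to the same problem. By \Cref{the_infinity_alsox}(i), $v^A_{\mathrm{surr}}=v^A_\infty$, and by \Cref{the_infinity_alsox}(ii), $v^{\CVaR}_{\mathrm{surr}}=v^{\CVaR}_\infty$, from which $v^A_\infty\leq v^{\CVaR}_\infty$ follows.

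I expect the main obstacle to be a careful bookkeeping argument that the two equivalences in \Cref{the_infinity_alsox} truly align with the bilevel/hinge-loss structures used in the proof of \Cref{the1}, especially that the lower-level hinge-loss program in \eqref{drccp_alsox_infity_eq} coincides with \eqref{eq_also_x_2b} for the surrogate $\bar g_i$ after projecting out the slack variable $s(\cdot)$, and similarly that \eqref{drccp_cvar_infity_eq} matches \eqref{eq_cvar_eq}. Once these identifications are made explicit, the comparison is purely a corollary of \Cref{the1}; no new nontrivial analytic step is required. For completeness, I would add a short remark that the argument does not need $\bar g_i$ to be finite-valued everywhere (it is proper thanks to the compactness of $\{\rxi:\|\rxi-\rzeta\|\le\theta\}$), which is the only regularity point where $\infty$-Wasserstein specifically enters beyond what \Cref{the_infinity_drccp} already supplies.
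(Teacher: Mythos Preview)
Your proposal is correct and follows essentially the same approach as the paper: invoke \Cref{the_infinity_alsox} to identify the worst-case $\alsox$ and worst-case $\CVaR$ with the ordinary $\alsox$ and $\CVaR$ approximations of the surrogate CCP \eqref{eq_drccp_infty}, then apply \Cref{the1}. The paper's proof is actually terser than yours---it does not spell out the verification of Assumptions~\ref{A_1}--\ref{A_2} for the robustified functions $\bar g_i$---so your added bookkeeping is a welcome elaboration rather than a deviation.
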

	\proof
	See Appendix~\ref{proof_the_infinity}.
	\QEDA
	
	{We also remark that under some additional assumptions of the functions $\{g_i(\cdot,\cdot)\}$, 
		their robust counterparts $\{\bar g_i(\cdot,\cdot)\}$ admit simple representations. Interested readers are referred to the work \citep{ben2009robust} for many different choices of  functions $\{g_i(\cdot,\cdot)\}$. Below, we list two classes of functions:
		\begin{enumerate}[label=(\roman*)]
			\item 
			When the functions are bi-affine, i.e., $g_i(\bm x,{\rxi}) ={\rxi}^\top\bm a_i(\bm x)-b_i(\bm x)$ with affine functions $\bm a_i(\bm x),b_i(\bm x)$ for each $i\in [I]$, we have
			\begin{align*}
			\bar g_i(\bm x,\rzeta)  = \theta\left\|\bm{a}_i^\top(\bm{x})\right\|_*+\tilde\rzeta ^\top\bm{a}_i(\bm{x})-b_i(\bm{x}),\forall i\in [I].
			\end{align*}
			Note that the bi-affinity assumption of $\{g_i(\cdot,\cdot) \}_{i\in [I]}$ has been commonly used in many DRCCP literature (see, e.g., \citealt{hanasusanto2015distributionally,hanasusanto2017ambiguous,xie2018deterministic,xie2019distributionally}).
			\item When the norm is $L_\infty$ (i.e., $\|\cdot\|=\|\cdot\|_\infty$) and the function $g_i(\bm x,{\rxi}) $ is monotone non-decreasing in $\rxi$ for any $\bm{x}\in \X$ and $i\in [I]$, we have 
			\begin{align*}
			\bar g_i(\bm x,\rzeta)  = g_i(\bm x,\rzeta+\theta\bm e),\forall i\in [I].
			\end{align*}
			This monotonicity structure has been studied in  the recent works \citep{zhang2021robust,xie2020tractable,chen2020regret}.
		\end{enumerate}
	}
	
	
	\subsection{Exactness of the Worst-case $\alsoxt$}
	Similar to \Cref{exact_el}, we are able to identify two sufficient conditions under which the worst-case $\alsox$ \eqref{drccp_alsox_formulation} can provide an optimal solution to DRCCP \eqref{eq_drccp}. Particularly, we consider the single DRCCP and elliptical reference distribution with the following condition.
	\begin{restatable}{proposition}{exacteldrccp}\label{exact_el_drccp} 
		{Suppose that the reference distribution $\Pr_{\tilde\rzeta}$ is elliptical $\Pr_{\mathrm{E}}(\bm{\mu},\bm{\mathrm{{\Sigma}}},\hat g)$, and the norm defining the Wasserstein distance is the Mahalanobis norm associated with the positive definite matrix $\bm{\mathrm{{\Sigma}}}$, i.e., $\left\|\bm{y}\right\|=\sqrt{\bm{y}^\top\bm{\mathrm{{\Sigma}}}^{-1}\bm{y}}$, for some $\bm y\in \Re^n$. If $I=1$ and the random function $g_1(\bm x,{\rxi}) ={\rxi}^\top\bm a_1(\bm x)-b_1(\bm x)$, then} the
		worst-case $\alsox$ \eqref{drccp_alsox_formulation} provides an optimal solution to DRCCP \eqref{eq_drccp} under $\infty-$Wasserstein ambiguity set if (i) $\mathcal{X}\subseteq\{\bm x:\sqrt{\bm{a}_1(\bm{x})^\top\bm{\mathrm{{\Sigma}}}\bm{a}_1(\bm{x})}=C\} $, where $C$ is a positive constant; or (ii) $\mathcal{X}\subseteq\{\bm x:b_1(\bm x)-\bm\mu^\top \bm{a}_1(\bm {x})=C\} $, where $C$ is an arbitrary constant.
	\end{restatable}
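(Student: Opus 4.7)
The plan is to reduce \Cref{exact_el_drccp} to \Cref{exact_el} via the equivalences in \Cref{the_infinity_drccp} and \Cref{the_infinity_alsox}. Using the Mahalanobis norm $\|\bm{y}\| = \sqrt{\bm{y}^\top \bm{\mathrm{\Sigma}}^{-1} \bm{y}}$ and its dual $\|\bm{v}\|_* = \sqrt{\bm{v}^\top \bm{\mathrm{\Sigma}}\,\bm{v}}$, a direct support-function computation on the bi-affine $g_1$ yields
\[
\bar{g}_1(\bm{x}, \rzeta) = \rzeta^\top \bm{a}_1(\bm{x}) + \theta\sqrt{\bm{a}_1(\bm{x})^\top \bm{\mathrm{\Sigma}}\,\bm{a}_1(\bm{x})} - b_1(\bm{x}),
\]
which remains linear in $\rzeta$. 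Hence the reformulated CCP in \Cref{the_infinity_drccp} is a single linear CCP under the elliptical reference distribution $\Pr_{\mathrm{E}}(\bm{\mu}, \bm{\mathrm{\Sigma}}, \hat{g})$, with effective intercept $\hat{b}_1(\bm{x}) := b_1(\bm{x}) - \theta\sqrt{\bm{a}_1(\bm{x})^\top \bm{\mathrm{\Sigma}}\,\bm{a}_1(\bm{x})}$, and its worst-case $\alsox$ coincides with the $\alsox$ formulation of \Cref{hinge_loss_truncated_el} (whose derivation only relies on $\tilde{\rzeta}^\top \bm{a}_1(\bm{x}) - \hat{b}_1(\bm{x})$ being univariate elliptical, not on affinity of $\hat{b}_1$ in $\bm{x}$).

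For Condition~(i), $\sqrt{\bm{a}_1(\bm{x})^\top \bm{\mathrm{\Sigma}}\,\bm{a}_1(\bm{x})} \equiv C$ on $\mathcal{X}$ makes the extra term $\theta\sqrt{\bm{a}_1^\top \bm{\mathrm{\Sigma}}\,\bm{a}_1}$ merely an additive constant absorbed into $\hat{b}_1$, and the reformulated CCP still satisfies the first condition of \Cref{exact_el}; invoking that result yields exactness. For Condition~(ii), $b_1(\bm{x}) - \bm{\mu}^\top \bm{a}_1(\bm{x}) \equiv C$, so the effective normalized margin $\alpha := (\hat{b}_1(\bm{x}) - \bm{\mu}^\top \bm{a}_1(\bm{x}))/\sqrt{\bm{a}_1^\top \bm{\mathrm{\Sigma}}\,\bm{a}_1}$ satisfies $\sqrt{\bm{a}_1^\top \bm{\mathrm{\Sigma}}\,\bm{a}_1} = C/(\alpha + \theta)$, and the $\alsox$ inner objective in \eqref{eq_alsox_elb} collapses to $\tfrac{C}{\alpha+\theta}\,h(\alpha)$ with $h(\alpha) := \bar{G}(\alpha^2/2) - \alpha + \alpha\mathrm{\Phi}(\alpha)$.

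A direct computation gives $h'(\alpha) = \mathrm{\Phi}(\alpha) - 1 \le 0$, whence
\[
\frac{d}{d\alpha}\!\left[\frac{C\,h(\alpha)}{\alpha+\theta}\right] \;=\; \frac{C}{(\alpha+\theta)^2}\,\bigl[\theta(\mathrm{\Phi}(\alpha) - 1) - \bar{G}(\alpha^2/2)\bigr] \;\le\; 0
\]
for $C > 0$ and $\theta \ge 0$, since $\mathrm{\Phi}(\alpha) \le 1$ and $\bar{G} \ge 0$; for finite $\alpha$ the inequality is strict. Thus the $\alsox$ inner objective is strictly monotone non-increasing in $\alpha$, so any inner optimum attains the largest feasible $\alpha$ subject to $\bm{c}^\top \bm{x} \le t$. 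At $t = v_\infty^*$, any DRCCP-optimal $\bm{x}^\dagger$ is inner feasible and satisfies $\alpha(\bm{x}^\dagger) \ge \mathrm{\Phi}^{-1}(1-\varepsilon)$, so the inner-optimal $\bm{x}^*$ satisfies $\alpha(\bm{x}^*) \ge \alpha(\bm{x}^\dagger) \ge \mathrm{\Phi}^{-1}(1-\varepsilon)$ and is DRCCP-feasible; the upper-level bisection then identifies $t^{*} = v_\infty^*$, giving $v_\infty^A = v_\infty^*$.

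The main obstacle is Condition~(ii): the Wasserstein perturbation destroys the affinity of the effective intercept $\hat{b}_1$, so the second condition of \Cref{exact_el} cannot be invoked verbatim, and the monotonicity of the modified ratio $h(\alpha)/(\alpha+\theta)$ must be established directly, crucially leveraging $\theta \ge 0$ together with $\bar{G} \ge 0$. Degenerate sub-cases ($C = 0$ forces $\bm{a}_1(\bm{x}) \equiv \bm{0}$ and trivializes the problem; $C < 0$ renders the reformulated CCP infeasible whenever $\mathrm{\Phi}^{-1}(1-\varepsilon) + \theta > 0$) can be dispatched separately.
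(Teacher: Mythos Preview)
Your proposal is correct and follows essentially the same route as the paper: both reduce the worst-case $\alsox$ to a hinge-loss minimization of the form $\sigma\,f(\cdot)$ via \Cref{the_infinity_drccp}, \Cref{the_infinity_alsox}, and the elliptical computation underlying \Cref{hinge_loss_truncated_el}, and then establish feasibility of the inner optimum by a monotonicity argument. The differences are cosmetic. For case~(i) you invoke \Cref{exact_el} directly (since $\hat b_1(\bm x)=b_1(\bm x)-\theta C$ remains affine and the first condition is preserved verbatim), whereas the paper re-derives the monotonicity of $Cf(\alpha-\theta)$ from scratch; your reduction is slightly cleaner. For case~(ii) you parametrize by the effective margin $\alpha$ and show $\tfrac{C}{\alpha+\theta}h(\alpha)$ is decreasing in $\alpha$, while the paper parametrizes by $\sigma$ and shows $\sigma f(C/\sigma-\theta)$ is increasing in $\sigma$; since $\sigma=C/(\alpha+\theta)$ these are the same computation up to a change of variable, and indeed your derivative numerator $\theta(\Phi(\alpha)-1)-\bar G(\alpha^2/2)$ is exactly the negative of the paper's $\bar G(\cdot)+\theta(1-\Phi(\cdot))$. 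Your explicit flag that $\hat b_1$ loses affinity under~(ii), so \Cref{exact_el} cannot be quoted verbatim there, is accurate and matches why the paper (and you) must redo the derivative calculation rather than cite the earlier result.
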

	\proof
	See Appendix~\ref{proof_exact_el_drccp}.
	\QEDA

	\section{Numerical Illustrations}
	\label{numerical_study}
	In this section, we conduct numerical studies to demonstrate the efficacy of the proposed methods. We evaluate the differences among $\CVaR$ approximation, $\alsox$, and $\alsox +$. 
	To evaluate the effectiveness of the proposed algorithms, we use ``\textrm{Improvement}'' to denote the percentage of differences between the value of a proposed algorithm and $\CVaR$ approximation, i.e.,
	\begin{align*}
	\textrm{Improvement} (\%) = \frac{ \CVaR \textrm{ approximation value}-\textrm{value of a proposed algorithm} }{|\CVaR \textrm{ approximation value}|}\times 100.
	\end{align*}
	
	All the instances in this section are executed in Python 3.6 with calls to solver Gurobi (version 8.1.1 with default settings) on a personal PC with 1.6 GHz Intel Core i5 processor and 8G of memory. We set the time limit of each instance to be 3600s. Codes of the numerical experiments are available at \url{https://github.com/jnan97/ALSO-X}.
	
	Now, we compare the performances of $\CVaR$ approximation, $\alsox$, and $\alsox +$ of solving the regular CCP with linear and nonlinear uncertain constraints. Particularly, we consider the number of data points $N=400,600,1000$, the risk level $\varepsilon=0.05, 0.1$, and the dimension of decision variables $n=20,40,100$. For each parametric setting, we generate $5$ random instances and report their average performance. 
	
	\noindent\textbf{Testing a Linear CCP. }
	Let us first consider the following linear CCP
	\begin{align*}
	v^*=\min _{\bm{x}} &\left\{ \bm{c}^\top \bm{x}\colon \bm{x}\in [0,1]^n, \frac{1}{N}\sum_{i\in[N]} \one \left[ \sum_{j\in[n]} \xi^i_j x_j\leq 100 \right] \geq 1-\varepsilon\right\}.
	\end{align*}
	Above, we generate the samples $\{\rxi^i\}_{i\in [N]}$ by assuming that the random parameters $\tilde{\rxi}$ are discrete and i.i.d. uniformly distributed between $1$ and $50$. We set $\delta_1=10^{-2}$ and $\delta_2=10^{-2}$ in $\alsox$ \Cref{alg_alsox} and $\alsox +$ \Cref{alg_alsox_+}. For each random instance, we generate the cost vector $\bm{c}$ as a random integer one with each entry uniformly distributed between $-10$ and $-1$. Since we have proven in \Cref{the1} that $\alsox$  {delivers} a better solution than that of the $\CVaR$ approximation, in $\alsox$ \Cref{alg_alsox} and $\alsox +$ \Cref{alg_alsox_+}, we use the optimal value from $\CVaR$ approximation as an initial upper bound $t_U$, and the quantile bound from \cite{ahmed2017nonanticipative,song2014chance} as an initial lower bound $t_L$. Besides, at each bisection iteration, we also use the upper bound of allowable increase or lower bound of allowable decrease of the current value $t$ to update its value for the next iteration. The numerical results for this linear CCP are displayed in \Cref{tab_linear_single}. We see that {although the computation time of $\alsox$ is longer than that of $\CVaR$ approximation,} $\alsox$ \Cref{alg_alsox} can be solved within seconds and enhance the solution of $\CVaR$ approximation by around 4-8\% improvement. The performance of $\alsox +$ \Cref{alg_alsox_+} is even more striking, which can improve the solution-quality of $\CVaR$ approximation by around 5-10\%. This demonstrates the correctness and effectiveness of our proposed algorithms. 
	
	

	\begin{table}[htbp]
		\centering
		\caption{Numerical Results for the Linear CCP}
		\renewcommand{\arraystretch}{1} 
		\label{tab_linear_single}
		\tiny
		\begin{center}
			\begin{tabular}{ c c  r  r r r r| r  r r r r }
				\hline
				\multicolumn{1}{c}{ \multirow{3}*{$N$} }&
				\multicolumn{1}{c}{ \multirow{3}*{$n$} }&
				\multicolumn{5}{c|}{$\varepsilon=0.05$} & \multicolumn{5}{c}{$\varepsilon=0.10$} \\
				\cline{3-12}
				& &	\multicolumn{1}{c}{$\CVaR$} &
				\multicolumn{2}{c}{$\alsox$} &
				\multicolumn{2}{c|}{$\alsox +$}& 	
				\multicolumn{1}{c}{$\CVaR$} &
				\multicolumn{2}{c}{$\alsox$} &
				\multicolumn{2}{c}{$\alsox +$}\\
				\cline{3-12}
				& &Time (s)&Time (s)& \makecell{Improve-\\ment (\%)} &Time (s)& \makecell{Improve-\\ment (\%)}&Time (s)&Time (s)& \makecell{Improve-\\ment (\%)} &Time (s)& \makecell{Improve-\\ment (\%)} \\
				\hline
				\multirow{3}{*}{400} 
				&20 & 0.04& 0.52&7.00 & 3.21&8.43& 0.04& 0.44&8.27 & 2.97&9.04\\
				\cline{2-12}
				&40 & 0.08& 0.87&4.66 & 6.94&6.27& 0.04& 0.71&6.85& 7.09&7.76\\
				\cline{2-12}
				&100 &0.19& 1.87&4.02& 22.02&5.38& 0.12 & 2.03 &4.95& 22.91&6.05 \\
				\cline{1-12}
				\multirow{3}{*}{600} &20 & 0.07&0.62 & 6.22 & 5.27&7.29& 0.04& 0.73&7.76& 4.04&8.39 \\
				\cline{2-12}
				&40 & 0.10& 1.29&5.36& 11.43&6.28 & 0.04& 1.09&6.56& 10.72&7.46\\
				\cline{2-12}
				&100 & 0.23& 3.17&3.20& 35.34&4.53& 0.13& 3.31&4.75& 36.84&5.63\\
				\cline{1-12}
				\multirow{3}{*}{1000} 	&20& 0.12& 1.07&5.86& 9.24&7.10 & 0.06&1.04&7.45& 7.75&8.33\\
				\cline{2-12}
				&40 & 0.21& 2.03 &4.65& 21.45&5.53& 0.16& 2.25 &5.84& 21.98&6.36\\
				\cline{2-12}
				&100 & 0.29& 4.95& 4.04&60.34&4.96& 0.34& 5.30&5.36& 60.58&6.62 \\
				\hline 
			\end{tabular}
		\end{center}
	\end{table}

	\noindent\textbf{Testing a Nonlinear CCP. }
	Following \cite{xie2018quantile, hong2011sequential, sun2014asymptotic}, let us consider the following chance constrained quadratic optimization problem as
	\begin{align*}
	v^*=\min _{\bm{x}} &\left\{ \bm{c}^\top \bm{x}\colon \bm{x}\in [0,1]^n, \frac{1}{N}\sum_{i\in[N]} \one \left[ \sum_{j\in[n]} \xi^i_j x_j^2\leq 100 \right] \geq 1-\varepsilon\right\}.
	\end{align*}
	Above, we generate the samples $\{\rxi^i\}_{i\in [N]}$ by assuming that the random parameters $\tilde{\rxi}$ are discrete and i.i.d. uniformly distributed between $1$ and $99$. We set $\delta_1=10^{-2}$ and $\delta_2=10^{-2}$ in $\alsox$ \Cref{alg_alsox} and $\alsox +$ \Cref{alg_alsox_+}. For each random instance, we generate its cost vector $\bm{c}$ as a random integer one with each entry uniformly distributed between $-10$ and $-1$. 
	For this nonlinear CCP, we run SD \Cref{alg_SD} to solve the hinge-loss approximation in \Cref{alg_alsox} as well as to solve the problem \eqref{basic_ap_1}, while we use Gurobi to directly solve the $\CVaR$ approximation. Note that we set the maximum number of iterations to be $50$. The $\CVaR$ approximation is time-consuming, in $\alsox$ \Cref{alg_alsox} and $\alsox +$ \Cref{alg_alsox_+}, we use the greedy method to find a feasible solution as an initial upper bound $t_U$, and the quantile bound from \cite{ahmed2017nonanticipative} as an initial lower bound $t_L$. 
	The numerical results are shown in \Cref{tab_nonlinear_single}. Notably, for this nonlinear CCP, we see that both $\alsox$ \Cref{alg_alsox} and $\alsox +$ \Cref{alg_alsox_+} provide better solutions than the $\CVaR$ approximation, and they are faster than the $\CVaR$ approximation, especially when the dimension of decision variables increases. {This might be because the off-the-shelf solvers often struggle in solving the large-scale second-order conic programs and the first-order method, on the contrary, is more effective given that the projection is relatively easy.}
	
	To demonstrate the effectiveness of our proposed method, we numerically compare the proposed $\alsox+$ \Cref{alg_alsox_+}  with the exact Big-M method. Interested readers are referred to Appendix~\ref{numerical_big_m} for the detailed numerical results, where Big-M method is often unable to find a better solution than $\alsox+$ especially when the dimension increases and $\alsox+$ can consistently find near-optimal solutions or even optimal solutions.

	\begin{table}[htbp]
		\centering
		\caption{Numerical Results for the Nonlinear CCP}
		\setlength{\tabcolsep}{2pt} 
		\renewcommand{\arraystretch}{1} 
		\label{tab_nonlinear_single}
		\tiny
		\begin{center}
			\begin{tabular}{ c c  r  r r r r| r  r r r r }
				\hline
				\multicolumn{1}{c}{ \multirow{3}*{$N$} }&
				\multicolumn{1}{c}{ \multirow{3}*{$n$} }&
				\multicolumn{5}{c|}{$\varepsilon=0.05$} & \multicolumn{5}{c}{$\varepsilon=0.10$} \\
				\cline{3-12}
				& &	\multicolumn{1}{c}{$\CVaR$} &
				\multicolumn{2}{c}{$\alsox$} &
				\multicolumn{2}{c|}{$\alsox +$}& 	
				\multicolumn{1}{c}{$\CVaR$} &
				\multicolumn{2}{c}{$\alsox$} &
				\multicolumn{2}{c}{$\alsox +$}\\
				\cline{3-12}
				& &Time (s)&Time (s)& \makecell{Improve-\\ment (\%)} &Time (s)& \makecell{Improve-\\ment (\%)}&Time (s)&Time (s)& \makecell{Improve-\\ment (\%)} &Time (s)& \makecell{Improve-\\ment (\%)} \\
				\hline
				\multirow{3}{*}{400} 
				&20 & 3.40& 2.40&2.23 &16.74 &2.60 & 1.64& 2.43&3.02 & 13.29&3.41\\
				\cline{2-12}
				&40 & 6.90& 2.61&1.99 & 16.84 &2.81	& 4.58& 3.02&2.27 & 16.53&2.72 \\
				\cline{2-12}
				&100 &43.14& 4.47&1.42&31.68 &1.93 & 55.32& 3.22&1.55& 25.25&1.90 \\
				\cline{1-12}
				\multirow{3}{*}{600} &20 & 4.22& 4.74&2.39& 17.61&2.84& 2.99& 3.55&2.89& 22.08&3.07 \\
				\cline{2-12}
				&40 & 11.57& 4.93&1.84&18.39 &2.03 & 8.94& 5.24&2.30& 22.88&2.62\\
				\cline{2-12}
				&100 & 70.85& 5.10&1.18& 28.21 &1.71 &68.44& 5.92&1.50& 31.60&1.78 \\
				\cline{1-12}
				\multirow{3}{*}{1000} 	&20& 7.81& 5.24&2.45& 20.24&2.61& 7.49& 7.75&2.84& 34.48&2.96\\
				\cline{2-12}
				&40 & 30.72& 6.94 &2.05& 21.74&2.21&21.84& 20.07 &2.24& 47.55&2.36\\
				\cline{2-12}
				&100 & 170.28& 6.89&1.30&33.50&1.54 & 130.08& 51.41&1.59& 63.30&1.73 
				\\
				\hline 
			\end{tabular}
		\end{center}
	\end{table}

	\noindent\textbf{Covering CCPs: Comparisons Between Relax-and-Scale Algorithm in \cite{xie2020bicriteria} and the Proposed Algorithms. }
	Although we have proven in \Cref{thm_ccp} that for the finite-support covering CCP, the proposed $\alsox$ \Cref{alg_alsox} has the same worst-case approximation ratio as the relax-and-scale algorithm (see, e.g., algorithm 2 in \citealt{ahmed2018relaxations} or algorithm 1 in \citealt{xie2020bicriteria}).
	In this subsection, we  numerically {compare} these two algorithms as well as the $\CVaR$ approximation and $\alsox +$ \Cref{alg_alsox_+}.
	
	We consider the following covering CCP as
	\begin{align*}
	v^*=\min _{\bm{x}} &\left\{ \bm{c}^\top \bm{x}\colon \bm{x}\in [0,1]^n, \frac{1}{N}\sum_{i\in[N]} \one \left[ \sum_{j\in[n]} \xi^i_j x_j\geq 40 \right] \geq 1-\varepsilon\right\}.
	\end{align*}
	Above, we generate the samples $\{\rxi^i\}_{i\in [N]}$ by assuming that the random parameters $\tilde{\rxi}$ are discrete and i.i.d. uniformly distributed between $1$ and $50$. We set $\delta_1=10^{-2}$ and $\delta_2=10^{-2}$ in the $\alsox$ \Cref{alg_alsox} and $\alsox +$ \Cref{alg_alsox_+}. For each random instance, we generate the cost vector $\bm{c}$ as a random integer one with each entry uniformly distributed between $1$ and $10$. In the $\alsox$ \Cref{alg_alsox} and $\alsox +$ \Cref{alg_alsox_+}, the continuous relaxation bound of covering CCP \eqref{covering_relax} is set as the initial lower bound $t_L$, and the approximation bound $(\floor{N\varepsilon}+1)t_L $ is set as the initial upper bound $t_U$. At each bisection iteration, we also incorporate the upper bound of allowable increase or lower bound of allowable decrease of the current value of $t$ to update its value in the next iteration. The numerical results are displayed in \Cref{tab_covering_comparison}. We see that the proposed $\alsox$ \Cref{alg_alsox} and $\alsox +$ \Cref{alg_alsox_+} are better than Relax-and-Scale algorithm in \citealt{ahmed2018relaxations} in terms of solution quality, while all the three algorithms dominate the results from the $\CVaR$ approximation.

	\begin{table}[htbp]
		\centering
		\caption{Numerical Result for Covering CCP}
		\setlength{\tabcolsep}{1pt} 
		\renewcommand{\arraystretch}{1} 
		\label{tab_covering_comparison}
		\tiny
		\begin{center}
			\begin{tabular}{ c c  r  r r r r r r| r  r r r r r r }
				\hline
				\multicolumn{1}{c}{ \multirow{3}*{$N$} }&
				\multicolumn{1}{c}{ \multirow{3}*{$n$} }& 
				\multicolumn{7}{c|}{$\varepsilon=0.05$} & \multicolumn{7}{c}{$\varepsilon=0.10$} 	\\
				\cline{3-16}
				& & 	\multicolumn{1}{c}{$\CVaR$} &
				\multicolumn{2}{r}{ \makecell{$\textrm{Relax-and-Scale}$\\ Algorithm} } &
				\multicolumn{2}{c}{$\alsox$} &
				\multicolumn{2}{c|}{$\alsox +$}&
				\multicolumn{1}{c}{$\CVaR$} &
				\multicolumn{2}{r}{ \makecell{$\textrm{Relax-and-Scale}$\\ Algorithm} } &
				\multicolumn{2}{c}{$\alsox$} &
				\multicolumn{2}{c}{$\alsox +$} \\
				\cline{3-16}
				\multicolumn{1}{r}{} & &Time (s)&Time (s)& \makecell{Improve-\\ment (\%)} &Time (s)& \makecell{Improve-\\ment (\%)} &Time (s)& \makecell{Improve-\\ment (\%)} &Time (s)&Time (s)& \makecell{Improve-\\ment (\%)} &Time (s)& \makecell{Improve-\\ment (\%)} &Time (s)& \makecell{Improve-\\ment (\%)}\\
				\hline
				\multirow{3}{*}{400} 
				&20 & 0.02 & 1.17 & 14.57 & 0.43 & 16.45 & 2.11 & 18.78 & 0.01 & 0.63 & 16.45 & 0.42 & 18.57 & 2.21 & 19.85 \\
				\cline{2-16}
				&40 & 0.02 &1.50 & 15.65 & 1.01 & 17.54 & 7.00 & 20.36 & 0.02 & 1.06 & 15.01 & 1.07 & 15.29 & 6.73 & 16.68\\
				\cline{2-16}
				&100 &0.06 & 2.79 & 7.33 & 2.44 & 10.48 & 18.53 & 12.10&0.05 & 2.33 & 9.86 & 2.01 & 11.44 & 15.59 & 13.05 \\
				\cline{1-16}
				\multirow{3}{*}{600} &20 &0.03 & 1.13 & 17.17 & 0.61 & 18.89& 4.48 & 20.22& 0.02 & 1.23 & 18.31 & 0.60 & 19.11 & 4.42 & 19.57\\
				\cline{2-16}
				&40 & 0.04 & 1.55 & 13.59 & 1.10 & 13.30& 7.58 & 15.50 & 0.03 & 1.58 & 15.58 & 1.11 & 16.16 & 7.41 & 17.82\\
				\cline{2-16}
				&100 & 0.09 & 3.26 & 7.83 & 2.88 & 10.80 & 19.22 & 11.55& 0.07 & 3.00 & 9.20 & 2.43 & 10.77 & 20.88 & 11.69 \\
				\cline{1-16}
				\multirow{3}{*}{1000} &20& 0.06 & 2.75 & 14.52 & 0.95 & 16.28 & 10.92 & 18.16 & 0.05 & 2.49 & 20.08 & 0.99 & 19.42 & 7.51 & 19.94\\
				\cline{2-16}
				&40 & 0.12 & 4.01 & 10.40 & 2.28 & 13.47 & 17.65 & 14.53 & 0.09 & 3.99 & 16.75 & 2.15 & 15.82 & 18.98 & 16.68\\
				\cline{2-16}
				&100 &0.26 & 6.42 & 9.65 & 5.76 & 10.12& 44.97 & 11.27&0.18 & 6.02 & 10.60 & 4.90 & 11.05 & 39.58 & 11.86\\
				\hline
			\end{tabular}
		\end{center}
	\end{table}

	\section{Conclusion} \label{sec_conclusion}
	In this paper, we studied and generalized the $\alsox$ algorithm for solving chance constrained programs (CCP). We showed that when uncertain constraints are convex, the $\alsox$ always outperforms $\CVaR$ approximation, the well-known best convex approximation in literature. We also showed several sufficient conditions under which $\alsox$ can return an optimal solution to CCP. We also provided an equivalent bilinear programming formulation of CCP, which allows us to enhance the $\alsox$ with a convergent alternating minimization scheme ($\alsox +$). We extended $\alsox$ to solve the distributionally robust chance constrained programs (DRCCPs) under {$\infty-$}Wasserstein ambiguity set. Our numerical study showed the effectiveness of the proposed algorithms.

{	\section*{Acknowledgment}
This research has been supported by the National Science Foundation grant 2046426. Valuable comments from the associate editor and two anonymous reviewers are gratefully acknowledged.}
	
	\bibliography{alsox}

	\newpage
	\titleformat{\section}{\large\bfseries}{\appendixname~\thesection .}{0.5em}{}
	\begin{appendices}
		
		\section{Proofs}\label{proofs}
		
		\vspace{10pt}
		
		\noindent\textbf{\large Proofs in Section~\ref{alsox_property}}
		
		\vspace{5pt}
		
		\subsection{Proof of \Cref{also_+_formulation_1}}\label{proof_also_+_formulation_1}\alsoxformulation*
		
	\begin{subequations}
		\begin{proof}
		We {prove} Formulation \eqref{eq_bilinear} and CCP \eqref{eq_bilinear_prop} are equivalent. Let $v_1, v_2$ be the optimal values of Formulation \eqref{eq_bilinear} and CCP \eqref{eq_bilinear_prop}, respectively. Then it remains to show that $v_1\leq v_2$ and $v_2\leq v_1$.
		\begin{enumerate}[]
			\item ($v_1\leq v_2$) Let $( \bm x^*, {z^*(\cdot)})$ be an optimal solution to CCP \eqref{eq_bilinear_prop}. Define $s^*(\rxi):=\max \{ g(\bm x^*,\rxi), 0\}$. {According to the properties of the measurable functions (see, e.g., section 3.1 in \citealt{royden1988real}), $s^*(\cdot)$ is measurable.} 
			{As in Formulation \eqref{eq_bilinear_prop}, the constraint $z^*(\trxi)\leq \I( g(\bm x^*,\trxi)\leq 0)$ holds a.s., we have 
				\begin{align*}
				0\leq 	\E[z^*(\trxi)s^*(\trxi)]  \leq 	\E\left[  \I( g(\bm x^*,\trxi)\leq 0) s^*(\trxi) \right],
				\end{align*} 
where the first inequality is due to the nonnegativity of $z^*(\rxi)s^*(\rxi)$ and the second one is because of monotonicity and nonnegativity of $s^*(\cdot)$.
				Since $s^*(\rxi):=\max \{ g(\bm x^*,\rxi), 0\}$, for any positive $t>0$,
we have
				\begin{align*}
					\Pr\left\{\trxi: \I( g(\bm x^*,\trxi)\leq 0)s^*(\trxi) \geq t  \right\}
=	\Pr\left\{\trxi:  s^*(\trxi) \geq t,  g(\bm x^*,\trxi)\leq 0\right\}=0,
				\end{align*}
which implies
				\begin{align*}
					\E\left[  \I( g(\bm x^*,\trxi)\leq 0) s^*(\trxi) \right]\leq 0.
				\end{align*}
		Thus, we must have  
		$\E[{z}^*(\trxi){s}^*(\trxi)]=0$.
			}
					Therefore, $(\bm x^*,{z^*(\cdot),s^*(\cdot)})$ is feasible to Formulation \eqref{eq_bilinear}, and hence $v_1\leq v_2$.
			\item ($v_2\leq v_1$) 	Let $(\bm x^*,{z^*(\cdot),s^*(\cdot)})$ be an optimal solution to Formulation \eqref{eq_bilinear} and suppose ${\hat{z}^*(\cdot)=\I\{ z^*(\cdot)>0\}}$.  {By the properties of the measurable functions (see, e.g., section 3.1 in \citealt{royden1988real}), the functional variable $\hat{z}^*(\cdot)$ is measurable.} 
			{	Thus, $\Pr\{\tilde{\bm\xi} \colon\hat z^*(\trxi)\in \{0,1\}\}=1$ and $\Pr\{\tilde{\bm\xi} \colon\hat z^*(\trxi)\geq z^*(\trxi)\}=1$. Together with the fact that $ \E[z^*(\trxi)]\geq 1-\varepsilon$, we have $ \E[\hat{z}^*(\trxi)]\geq 1-\varepsilon$ because of monotonicity}.  
			{According to constraints $ \E[z^*(\trxi)s^*(\trxi)]=0,{z}^*(\trxi)\in [0,1], s^*(\trxi)\geq 0$ almost surely, we have 
				\begin{align*}
					\Pr\left\{\tilde{\bm\xi} \colon z^*(\trxi)s^*(\trxi)=0, 0 \leq z^*(\trxi)\leq 1, s^*(\trxi)\geq 0\right\}=1.
				\end{align*}
			By the law of total probability (see, e.g., appendix A of \citealt{tijms2003first}), the above identity is equivalent to
				\begin{align}
					\Pr\left\{\tilde{\bm\xi} \colon 0 \leq z^*(\trxi)\leq 1, s^*(\trxi)=0\right\}+\Pr\left\{\tilde{\bm\xi} \colon  z^*(\trxi)=0, s^*(\trxi)> 0\right\}=1.\label{eq_two_terms}
				\end{align}
				Next, we bound two terms on the left-hand side separately. Since $g(\bm x^*, \trxi)\leq s^*(\trxi)$ holds almost surely, the conditional probability $\Pr\{\tilde{\bm\xi} \colon \I\{z^*(\trxi)> 0\} \leq \I \{ g(\bm x^*, \trxi)\leq 0\}\mid 0 \leq z^*(\trxi)\leq 1, s^*(\trxi)=0\}=1$. Hence, the first term on the left-hand side in \eqref{eq_two_terms} is equivalent to
				\begin{align*}
					& 			\Pr\left\{ \trxi \colon 0 \leq z^*(\trxi)\leq 1, s^*(\trxi)=0\right\}\\
					= &
					\Pr\left\{ \trxi \colon \I\left\{z^*(\trxi)> 0\right\} \leq \I \left\{ g(\bm x^*, \trxi)\leq 0 \right\}, 0 \leq z^*(\trxi)\leq 1, s^*(\trxi)=0\right\}.
				\end{align*}
	Since $\hat z^*(\trxi)=\I(z^*(\trxi)>0)$ and $0 \leq z^*(\trxi)\leq 1$ hold almost surely, we have
		\begin{align}
				\Pr\left\{ \tilde{\bm\xi} \colon 0 \leq z^*(\trxi)\leq 1, s^*(\trxi)=0\right\}  =\Pr\left\{ \trxi \colon \hat z^*(\trxi)\leq \I \left\{ g(\bm x^*, \trxi)\leq 0 \right\}, s^*(\trxi)=0\right\}.\label{eq_two_terms_left}
		\end{align}
		Similarly, the second term on the left-hand side in \eqref{eq_two_terms} can be written as
				\begin{align}
					\Pr\left\{ \trxi \colon z^*(\trxi)=0, s^*(\trxi)> 0\right\} = \Pr\left\{  \trxi \colon \hat z^*(\trxi) \leq \I \left\{ g(\bm x^*, \trxi)\leq 0\right\}, s^*(\trxi)> 0\right\}.\label{eq_two_terms_right}
				\end{align}
				Combining equalities \eqref{eq_two_terms}, \eqref{eq_two_terms_left} and \eqref{eq_two_terms_right} together, we have
				\begin{align*}
			\Pr\left\{ \trxi \colon \hat z^*(\trxi)\leq \I \left\{ g(\bm x^*, \trxi)\leq 0 \right\}, s^*(\trxi)=0\right\} + \Pr\left\{  \trxi \colon \hat z^*(\trxi) \leq \I \left\{ g(\bm x^*, \trxi)\leq 0\right\}, s^*(\trxi)> 0\right\}=1.
				\end{align*}
					By the law of total probability (see, e.g., appendix A of \citealt{tijms2003first}), the above equality can be simplified as
				\begin{align*}
					\Pr\left\{ \trxi \colon \hat z^*(\trxi) \leq \I \left\{ g(\bm x^*, \trxi)\leq 0\right\}\right\} = 1.
				\end{align*}
			}
			Thus, ($\bm x^*, {\hat{z}^*(\cdot)}$) satisfies the constraints in CCP \eqref{eq_bilinear_prop}. Therefore, $(\bm x^*,{\hat{z}^*(\cdot)})$ is feasible to CCP \eqref{eq_bilinear_prop}, and thus $v_2\leq v_1$. 	
		\end{enumerate}
		Therefore, Formulation \eqref{eq_bilinear} and CCP \eqref{eq_bilinear_prop} are equivalent. This concludes the proof. \QEDA
	\end{proof}
		\end{subequations} 	

		\subsection{Proof of \Cref{also_+_formulation_2}}\label{proof_also_+_formulation_2}\alsoxformulationbi*
		
			\begin{proof}
			Let $v_1, v_2$ be the optimal values of CCP \eqref{eq_bilinear} and Formulation \eqref{alsox_bilnear}, respectively. Then it remains to show that $v_1\leq v_2$ and $v_2\leq v_1$.
			\par
			\noindent ($v_1\leq v_2$) Let $(\bm x^*,{{z}^*(\cdot),{s}^*(\cdot)},t^*)$ be an optimal solution of Formulation \eqref{alsox_bilnear}. According to \eqref{alsox_bilnearc}, we have {$\Pr\{\trxi:g(\bm x^*,\tilde{\bm\xi})\leq 0\} \geq 1-\varepsilon$}, which implies $ \E_\Pr[\I(g(\bm x^*,\tilde{\bm\xi})\leq 0)] \geq 1-\varepsilon$. We can construct an optimal solution of Formulation \eqref{alsox_bilnear} as follows. { Let $\hat{s}({\rxi}) = \max\{ g(\bm x^*,{\rxi}),0\}$ and $\hat{z}({\rxi})=\I\{g(\bm x^*,{\rxi})\leq 0\}$. According to the properties of the measurable functions (see, e.g., section 3.1 in \citealt{royden1988real}), $\hat{s}({\trxi})$ and  $\hat{z}({\trxi})$ are measurable.}
			In this way, $(\bm x^*,\hat{z}({\rxi}),\hat{s}({\rxi}))$ satisfies the constraints in \eqref{alsox_bilnearb}. {Since $\hat{s}({\trxi})\geq 0$, $\hat{z}({\trxi})\geq 0$ hold almost surely,  $\E[\hat{z}({\trxi}) \hat{s}({\trxi})]$ is well defined. }
			 {From the proof in \Cref{also_+_formulation_1}, we have $\E[\hat{z}(\tilde{\rxi})\hat{s}(\tilde{\rxi})]=0$, indicating $(\bm x^*,\hat{z}(\cdot),\hat{s}(\cdot))$ solves the lower-level problem \eqref{alsox_bilnearb}}. Thus, $(\bm x^*,{\hat{z}(\cdot),\hat{s}(\cdot)},t^*)$ is another optimal solution of Formulation \eqref{alsox_bilnear}. 
			The fact that $(\bm x^*,{\hat{z}(\cdot),\hat{s}(\cdot)})$ is feasible to CCP \eqref{eq_bilinear} implies $v_1\leq v_2$.		
			\par
			\noindent ($v_2\leq v_1$) Let $(\bm x^*,{{z}^*(\cdot),{s}^*(\cdot)})$ be an optimal solution to CCP \eqref{eq_bilinear} and $t^*=\bm{c}^\top \bm{x}^*$. We have
			$\E[{z}^*(\tilde{\rxi}){s}^*(\tilde{\rxi})]=0$, which solves the lower-level problem \eqref{alsox_bilnearb}.
			Therefore, $(\bm x^*,{{z}^*(\cdot),{s}^*(\cdot)}, t^*)$ satisfies the constraints in Formulation \eqref{alsox_bilnear}. Thus, $v_2\leq v_1$.  \QEDA
		\end{proof}

		\subsection{Proof of \Cref{hinge_loss_truncated_el}}\label{proof_hinge_loss_truncated_el}\hingelosstruncatedel*
		
		\begin{proof}
			It is sufficient to prove that the hinge-loss approximation \eqref{eq_project_out} is equivalent to \eqref{eq_alsox_elb}. In fact, the objective function in the hinge-loss approximation \eqref{eq_project_out} can be calculated based on the definition of conditional expectation
			\begin{equation*}
			\E\left[[\tilde{\bm {\xi}}^\top\bm{a}_1(\bm{x})- b_1(\bm x)]_+\right]= \E\left[\tilde{\bm {\xi}}^\top \bm{a}_1(\bm{x}) - b_1(\bm x)\mid\tilde{\bm {\xi}}^\top \bm{a}_1(\bm{x})- b_1(\bm x)\geq 0\right]\Pr\left\{ \tilde{\bm {\xi}}^\top \bm{a}_1(\bm{x}) - b_1(\bm x)\geq 0\right\}.
			\end{equation*}
			Using the cumulative distribution function formula of an elliptical distribution, the objective function can be further simplified as
			\begin{equation*}
			\left(1-\mathrm{\Phi}(\frac{b_1(\bm x)-\bm\mu^\top \bm{a}_1(\bm{x})}{\sqrt{\bm{a}_1(\bm{x})^\top\bm{\mathrm{{\Sigma}}}\bm{a}_1(\bm{x})}})\right) \E\left[\tilde{\bm {\xi}}^\top \bm{a}_1(\bm{x}) - b_1(\bm x)\mid\tilde{\bm {\xi}}^\top \bm{a}_1(\bm{x})- b_1(\bm x)\geq 0\right].
			\end{equation*}
			According to the closed-form expression of the expectation of the truncated elliptical distribution (see, e.g., theorem 1 in \citealt{landsman2003tail}), the objective function is equivalent to
			\begin{equation*}
			\left(1-\mathrm{\Phi}(\frac{b_1(\bm x)-\bm\mu^\top \bm {x}}{\sqrt{\bm{a}_1(\bm{x})^\top\bm{\mathrm{{\Sigma}}}\bm{a}_1(\bm{x})}})\right)\left( \bm\mu^\top \bm{a}_1(\bm{x})-b_1(\bm x)+\sqrt{\bm{a}_1(\bm{x})^\top\bm{\mathrm{{\Sigma}}}\bm{a}_1(\bm{x})}\frac{\overline{G}(\frac{1}{2}(\frac{b_1(\bm x)-\bm\mu^\top \bm{a}_1(\bm{x}) }{\sqrt{\bm{a}_1(\bm{x})^\top\bm{\mathrm{{\Sigma}}}\bm{a}_1(\bm{x})}})^2)}{\left(1-\mathrm{\Phi}(\frac{b_1(\bm x)-\bm\mu^\top \bm{a}_1(\bm{x}) }{\sqrt{\bm{a}_1(\bm{x})^\top\bm{\mathrm{{\Sigma}}}\bm{a}_1(\bm{x})}})\right)} \right).
			\end{equation*}
			Let $\alpha = (b_1(\bm x)-\bm\mu^\top \bm{a}_1(\bm{x}))/\sqrt{\bm{a}_1(\bm{x})^\top\bm{\mathrm{{\Sigma}}}\bm{a}_1(\bm{x})} $. Then the objective can be simplified as
			\begin{equation*}
			\left(1-\mathrm{\Phi}(\alpha)\right)(\bm\mu^\top \bm{a}_1(\bm{x})-b_1(\bm x))+\sqrt{\bm{a}_1(\bm{x})^\top\bm{\mathrm{{\Sigma}}}\bm{a}_1(\bm{x})} \overline{G}(\alpha^2/2) =\sqrt{\bm{a}_1(\bm{x})^\top\bm{\mathrm{{\Sigma}}}\bm{a}_1(\bm{x})} \left(\overline{G}(\alpha^2/2)-\alpha+\alpha\mathrm{\Phi}(\alpha) \right).
			\end{equation*}
			%
			This concludes the proof.
			\QEDA
		\end{proof}

		\vspace{15pt}
		
		\noindent\textbf{\large Proofs in Section~\ref{alsox_strengths}}
		
		\vspace{5pt}

		\subsection{Proof of \Cref{the1}}\label{proof_the1}\alsoxbettercvarccp*
		\begin{proof}
			It is sufficient to show that for any given $t$, if an optimal solution of the lower-level problem \eqref{eq_also_x_2b} violates the chance constraint \eqref{eq_also_x_2c}, so does $\CVaR$ approximation. Next, we split the proof into two steps.\\
			\noindent\textbf{Step 1.} Recall that for any given $t$, the hinge-loss approximation \eqref{eq_also_x_2b} is
			\begin{subequations}
				\label{eq_also_x_2'}
				\begin{align}
				v^A(t)=\min_{\bm{x}\in \mathcal{X},{s(\cdot)}}\quad& \E[{s}(\tilde{\rxi})],\label{eq_also_x_2_a}\\
				\textup{s.t.}\quad & g(\bm x,\tilde{\bm {\xi}})\leq s(\tilde{\rxi}), \label{eq_also_x_2_b}\\
				& \bm{c}^\top \bm{x} \leq t,\label{eq_also_x_2_c}\\
				&s(\tilde{\rxi})\geq 0.\label{eq_also_x_2_d}
				\end{align}
			\end{subequations}
			And the lower-level problem \eqref{eq_cvar_t} is
			\begin{subequations}
				\label{eq_cvar_t'}
				\begin{align}
				v^{\CVaR}(t)=\min_{{\bm x\in \mathcal{X}},{s(\cdot)},\beta\leq 0}\ & \E[{s}(\tilde{\rxi})] - (1-\varepsilon)\beta,
				\label{eq_cvar_t_a}\\
				\textup{s.t.}\quad & g(\bm x,\tilde{\bm {\xi}})\leq s(\tilde{\rxi}), \label{eq_cvar_t_b}\\
				& \bm{c}^\top \bm{x} \leq t,\label{eq_cvar_t_c}\\
				& s(\tilde{\rxi})\geq \beta. \label{eq_cvar_t_d}
				\end{align}
			\end{subequations}
			Suppose that $(\bm x^*,{s^*(\cdot)})$ is an optimal solution to the hinge-loss approximation problem \eqref{eq_also_x_2'}. We would like to prove that if $\bm x^*$ is infeasible to CCP \eqref{eq_ccp}, i.e., $\Pr\{\tilde{\bm\xi}\colon s^*(\tilde{\bm\xi})>0\}>\varepsilon$, then we must have 
			$v^A(t)=v^{\CVaR}(t)>0$. Note that we already have $v^A(t)>0$ due the presumption that $\bm x^*$ is infeasible to CCP \eqref{eq_ccp}.
			\begin{subequations}
				To prove $v^A(t)=v^{\CVaR}(t)$, we let ${\alpha(\cdot),\pi, \mu(\cdot)}$ be the dual variables of constraints \eqref{eq_cvar_t_b}, \eqref{eq_cvar_t_c}, and \eqref{eq_cvar_t_d}, respectively. 
				The Lagrangian of the lower-level problem \eqref{eq_cvar_t'} is
				\begin{align*}
				\mathcal{L}\left(\bm{x},\beta,{s(\cdot),\mu(\cdot),\alpha(\cdot)},\pi\right):=&\E[{s}(\tilde{\rxi})]-(1-\varepsilon)\beta+\E\left[{\mu}(\tilde{\rxi})^\top[\beta-{s}(\tilde{\rxi})]\right]\\
				&+\pi(\bm{c}^\top \bm{x}-t)+\E\left[{\alpha}(\tilde{\rxi})^\top[g(\bm x,\tilde{\bm {\xi}})-{s}(\tilde{\rxi})]\right],
				\end{align*} 
				and its dual problem can be written as
				\begin{equation}
				v^{\CVaR}_D(t)=\max_{{\mu({\cdot}),\alpha({\cdot})},\pi}\, \min_{\bm{x},\beta,{s({\cdot})}} \mathcal{L}\left(\bm{x},\beta,{s(\cdot),\mu(\cdot),\alpha(\cdot)},\pi\right). \label{eq_cvar_t_dual}
				\end{equation}
				According to Assumptions \ref{A_1}-\ref{A_2}, the relaxed Slater condition holds, and thus theorem 1 in \cite{slater2014lagrange} implies that there is no duality gap between the lower-level problem \eqref{eq_cvar_t'} and its dual. Let $(\hat{\bm{x}},{\hat{s}(\cdot)},\hat{\beta})$ be an optimal solution of \eqref{eq_cvar_t'} and $({\hat{\alpha}(\cdot),\hat{\mu}(\cdot)},\hat{\pi})$ be an optimal solution of \eqref{eq_cvar_t_dual}. Then we have the following necessary and sufficient KKT conditions:
				\begin{align}
				&-\hat{\pi}\bm{c}^\top\in \partial_x\E\left[\hat{{\alpha}}(\tilde{\rxi})^\top[g(\hat{\bm {x}},\tilde{\bm {\xi}})]\right]+\N_{X}(\hat{\x}), \E[\hat{{\mu}}(\tilde{\rxi})]\leq 1-\varepsilon, \hat{{\mu}}(\tilde{\rxi})+\hat{{\alpha}}(\tilde{\rxi})=1, \nonumber\\
				& {0}\leq \hat{{\alpha}}(\tilde{\rxi}) \perp \left(\hat{{s}}(\tilde{\rxi})- g(\hat{\bm x},\tilde{\bm {\xi}})\right) \geq {0}, 0\leq\hat{\pi} \perp (t-\bm{c}^\top \hat{\bm{x}})\geq 0, {0}\leq \hat{{\mu}}(\tilde{\rxi}) \perp {\hat{{s}}(\tilde{\rxi}) -\hat\beta} \geq{0},\nonumber\\
				&\hat{\beta}\leq 0,\hat{\bm{x}}\in \mathcal{X}.\label{KKT1}\tag{KKT1}
				\end{align}
				Similarly, let ${\alpha(\cdot),\pi, \mu(\cdot)}$ be the dual variables of constraints \eqref{eq_also_x_2_b}, \eqref{eq_also_x_2_c}, and \eqref{eq_also_x_2_d}, respectively. The Lagrangian of the hinge-loss approximation \eqref{eq_also_x_2'} is 
				\begin{align*}
				\mathcal{L}\left(\bm{x},{s(\cdot),\mu(\cdot),\alpha(\cdot)},\pi\right):=\E[{s}(\trxi)]+\E\left[{\mu}(\trxi)^\top[-{s}(\trxi)]\right]
				+\pi(\bm{c}^\top \bm{x}-t)+\E\left[{\alpha}(\trxi)^\top[g(\bm x,\trxi)-{s}(\trxi)]\right],
				\end{align*} 
				and its dual program is
				\begin{equation} v^A_D(t)=\max_{{\mu({\cdot}),\alpha({\cdot})},\pi}\, \min_{\bm{x},{s({\cdot})}} \mathcal{L}\left(\bm{x},s({\rxi}),\mu({\rxi}),\alpha({\rxi}),\pi\right). \label{eq_alsox_dual}
				\end{equation}
				From the similar argument, the strong duality also holds, i.e., we must have $v^A_D(t)=v^A(t)$. 	Let $(\Bar{\bm{x}},{\Bar{s}(\cdot)})$ be an optimal solution of \eqref{eq_also_x_2'}, and $({\Bar{\alpha}({\cdot}),\Bar{\mu}({\cdot})},\Bar{\pi})$ be an optimal dual solution of \eqref{eq_alsox_dual}. Then we have the following necessary and sufficient KKT conditions:
				\begin{align}
				&-\Bar{\pi}\bm{c}^\top\in \partial_x\E\left[\Bar{{\alpha}}(\tilde{\rxi})^\top[g(\bm \Bar{x},\tilde{\bm {\xi}})]\right]+\N_{X}(\Bar{\x}), \Bar{{\mu}}(\tilde{\rxi})+\Bar{{\alpha}}(\tilde{\rxi})=1,\Bar{\bm{x}}\in \mathcal{X},\nonumber\\
				& 0\leq \Bar{{\alpha}}(\tilde{\rxi}) \perp \left(\Bar{{s}}(\tilde{\rxi})- g(\Bar{\bm x},\tilde{\bm {\xi}})\right) \geq 0, 0\leq\Bar{\pi} \perp (t-\bm{c}^\top \Bar{\bm{x}})\geq 0, 0\leq \Bar{{\mu}}(\tilde{\rxi}) \perp \Bar{{s}}(\tilde{\rxi}) \geq{0}.\label{KKT2}\tag{KKT2}
				\end{align}
				\noindent\textbf{Step 2.} To prove $v^{\CVaR}(t)= v^A(t)$ is equivalent to show that $v^{\CVaR}_D(t)=v^A_D(t)$. According to our presumption that $(\Bar{\bm x},{\Bar{s}({\cdot})})$ violates the chance constraint \eqref{eq_also_x_2c}, i.e., 
				$\Pr\{ \tilde{\rxi}\colon\Bar{{s}}(\tilde{\rxi})>0\} >\varepsilon$, which implies that $\Pr\{ \tilde{\rxi}:\Bar{{\mu}}(\tilde{\rxi})=0\} >\varepsilon$ from conditions \eqref{KKT2}.
				We also have
				\begin{equation*}
				\E\left[\Bar{\mu}(\tilde{\rxi})+\Bar{\alpha}(\tilde{\rxi})\right]=1,
				\end{equation*}
				and
				\begin{equation*}
				\E\left[\Bar{{\alpha}}(\tilde{\rxi})\right]\geq \E\left[\Bar{{\alpha}}(\tilde{\rxi})\I(\Bar{{\mu}}(\tilde{\rxi})={0})\right]=\Pr\left\{ \tilde{\rxi}\colon\Bar{{\mu}}(\tilde{\rxi})=0\right\} >\varepsilon.
				\end{equation*} 
				Since ${\Bar{{\alpha}}({\cdot})} \geq 0$, thus
				$\E[\Bar{{\mu}}(\tilde{\rxi})]< 1-\varepsilon$ must hold. This implies that the primal and dual pair, $(\Bar{\bm x},{\Bar{s}(\cdot)},\Bar{\beta}=0)$ and $({\Bar {\alpha}(\cdot)},\Bar \pi,{\Bar \mu(\cdot)})$, satisfies conditions \eqref{KKT1}. That is, $(\Bar{\bm x},{\Bar{s}(\cdot)},\Bar{\beta}=0)$ is optimal to the lower-level problem \eqref{eq_cvar_t_dual}. Hence, we have $v^A(t)= v^{\CVaR}(t)>0$.
				\QEDA
			\end{subequations}
		\end{proof}
		
		\subsection{Proof of \Cref{the4}}\label{proof_the4}\exactnessec*
		\begin{proof} 
			\begin{enumerate}[label=(\roman*)]
				\item We {prove} the ``only if" direction by contradiction.
				Suppose that the generalized nullspace property does not hold. Then there exists a solution $(\hat{\bm x},{\hat{s}(\cdot)})$ with ${{\trxi}}^\top(\bm A\hat{\bm x})-\bm B\hat{\bm x}-{\hat{s}}({{\trxi}})=0$  and $\hat{s}({{\trxi}})\not=0$ {a.s.} and a $\Pr-$measurable set $\mathcal{S} \subseteq\Xi$ such that $\Pr\{\tilde{\rxi}\colon \tilde{\rxi} \in \mathcal{S}\} \leq \varepsilon$ and $\E[|\hat{s}(\tilde{\rxi})|\I(\tilde{\rxi}\in \mathcal{S})]\geq \E[|\hat{s}(\tilde{\rxi})|\I(\tilde{\rxi}\notin \mathcal{S})]$. 
				Since
			$	{{\trxi}}^\top(\bm A\hat{\bm x})-\bm B\hat{\bm x}-	{\hat{s}}({{\trxi}})=0$ {holds a.s.}, we have $ 	{{\trxi}}^\top(\bm A\hat{\bm x})\I(	{{\trxi}}\in \mathcal{S})-\bm B\hat{\bm x}-{\hat{s}}(	{{\trxi})}\I({\trxi}\in \mathcal{S})=0$ and $	{{\trxi}}^\top (\bm A\hat{\bm x})\I(	{{\trxi}}\notin \mathcal{S})-\bm B\hat{\bm x}-	{\hat{s}}({{\trxi})}\I(	{{\trxi}}\notin \mathcal{S})=0$ 	{a.s.}.
				Then, $(\bm{\hat{x}}, \hat{ s}({\rxi})\I({\rxi}\in \mathcal{S}) )$ is not the unique optimal solution to the problem 
				\begin{align*}
				\min_{\bm{x},{s(\cdot)}}\left\{\E\left[|{s}(\tilde{\rxi})|\right]\colon
				\begin{array}{l}
				\displaystyle \tilde{\bm {\xi}}^\top(\bm A\bm x)-\bm b^\top\bm x-{s}(\tilde{\rxi})
				=\tilde{\bm {\xi}}^\top (\bm A\hat{\bm x})-\bm B\hat{\bm x}-{\hat{s}}(\tilde{\rxi})\I(\tilde{\rxi}\in \mathcal{S}),\\
				\displaystyle	 \bm{c}^\top \bm{x} = \bm{c}^\top \hat{\bm{x}}, {\bm U^\top \bm x=\bm U^\top \hat{\bm{x}}}
				\end{array}
				\right\}.
				\end{align*}
				Since $(\bm0,-\hat{s}({\rxi})\I({\rxi}\notin \mathcal{S})) $ is a different feasible solution to the problem, whose objective value is at least as good as $(\bm{\hat{x}}, \hat{s}({\rxi})\I({\rxi}\in \mathcal{S}) )$. This violates the presumption that all the optimal solution $(\bm{x}^*,{s^*(\cdot)})$ to the hinge-loss approximation \eqref{single_ccp_equality_1} has the same ${s^*(\cdot)}$ and satisfies $\Pr\{\tilde{\bm\xi} : s^*(\tilde{\rxi})=0\} \geq 1-\varepsilon$.
				
				To prove the ``if" direction: we let $(\bm{x},{s(\cdot)})$ be a feasible solution of the hinge-loss approximation \eqref{single_ccp_equality_1} that satisfies $\Pr\{\tilde{\rxi}: s(\tilde{\rxi})\neq 0\} \leq \varepsilon$ and $(\bm{\hat{x}},{\hat{s}(\cdot)})$ be an optimal solution of \eqref{single_ccp_equality_1} such that ${s(\cdot)\neq\hat{s}(\cdot)}$. Let us denote $\mathcal{S}=\{{\rxi}: s({\rxi})\neq 0\}$, $Z(\tilde{\rxi})=s({\rxi})-\hat{s}({\rxi})\neq 0$, and $ \bar{\bm{x}} = \bm{x}-\bm{\hat{x}} $. Then we have ${\trxi}^\top(\bm A\bar{\bm{x}} )-\bm B\bar{\bm{x}}-Z({\rxi})=0$ {a.s.}, $\bm{c}^\top\bar{\bm{x}}=\bm{c}^\top\bm{x}-\bm{c}^\top\hat{\bm{x}}=0$, and {$\bm{U}^\top\bar{\bm{x}}=\bm{U}^\top\bm{x}-\bm{U}^\top\hat{\bm{x}}=\bm 0$.}
				Thus,
				\begin{align*}
				&\E\left[|{s}(\tilde{\rxi})|\I(\tilde{\rxi}\in \mathcal{S})\right] - \E\left[|\hat{{s}}(\tilde{\rxi})|\I(\tilde{\rxi}\in \mathcal{S})\right]\leq \E\left[|{s}(\tilde{\rxi})-\hat{{s}}(\tilde{\rxi})|\I(\tilde{\rxi}\in \mathcal{S})\right] \\ =& \E\left[|Z(\tilde{\rxi})|\I(\tilde{\rxi}\in \mathcal{S})\right]< \E\left[|Z(\tilde{\rxi})|\I(\tilde{\rxi}\notin \mathcal{S})\right] = \E\left[|\hat{{s}}(\tilde{\rxi})|\I(\tilde{\rxi}\notin \mathcal{S})\right],
				\end{align*}
				the first inequality is due to the triangular inequality and the second inequality is based on the generalized nullspace property.
				
				Therefore, we get 
				\begin{equation*}
				\E\left[|{s}(\tilde{\rxi})|\right] = \E\left[|{s}(\tilde{\rxi})|\I(\tilde{\rxi}\in \mathcal{S})\right] < \E\left[|\hat{{s}}(\tilde{\rxi})|\right],
				\end{equation*}
				which is a contradiction to the optimality of ${\hat{s}(\cdot)}$.
				\item This follows directly from Part (i) by letting $t=v^*$. 	\QEDA
			\end{enumerate}
			
		\end{proof}
		
		\subsection{Proof of \Cref{exactness_set_covering}}\label{proof_exactness_set_covering}\exactnesssetcovering*
		
		\begin{proof}
			Since $v^A\geq v^*$, thus it suffices to show that $v^A\leq v^*$.
			In fact, we claim that for any $t\geq v^*$, there exists an optimal solution of the hinge-loss approximation \eqref{eq_also_x_2b} which satisfies the chance constraint \eqref{eq_also_x_2c}. We {prove} it by contradiction. Suppose the statement is not true. That is, there exists a $t\geq v^*$ such that an optimal solution $(\Bar{\bm x}, {\Bar{s}(\cdot)})$ of the hinge-loss approximation \eqref{eq_also_x_2b} violates the chance constraint (i.e., 
			$\Pr\{\tilde{\rxi}\colon \Bar{s}(\trxi)>0\}> \varepsilon$). According to the definition of random function $g(\cdot,\cdot)$, we know that {$\Pr\{\trxi:{\Bar{s}(\trxi)}\in \{0,M\}\}=1$}.
			{Let $\Bar{s}({\rxi})=M\I\{g(\Bar{\bm x},{\rxi})> 0\}$. According to the properties of the measurable functions (see, e.g., section 3.1 in \citealt{royden1988real}), $\Bar{s}({\trxi})$ is measurable. }
			 {Thus, we have}
			  $\E[\Bar{s}(\tilde{\rxi})]\geq M\Pr\{\Bar{s}(\tilde{\rxi})>0\}>M\varepsilon$. 
			
			On the other hand, let $(\bm x^*,{{s}^*(\cdot),{z}^*(\cdot)})$ be an optimal solution of CCP	\eqref{eq_bilinear} with optimal value $v^*$. Define $\hat{s}({\rxi})=M-M{z}^*({\rxi})$, and we have $$\E[\hat{s}(\tilde{\rxi})]=M-M\E[{z}^*(\tilde{\rxi})]\leq M-M(1-\varepsilon).$$ 
			Clearly, $(\bm x^*,{\hat{z}(\cdot)})$ is feasible to the hinge-loss approximation \eqref{eq_also_x_2b} with an objective value $\E[\hat{s}(\tilde{\rxi})]\leq M\varepsilon$, which contradicts the optimality of $(\Bar{\bm x}, {\Bar{s}(\cdot)})$.
			\QEDA
		\end{proof}

		\subsection{Proof of \Cref{exact_el}}\label{proof_exact_el}\exactel*
		
		\begin{proof}
			We split the proof into two parts by checking two conditions separately.
			\begin{enumerate} [label=(\roman*)]
				\item Suppose that $\mathcal{X}\subseteq\{\bm x:\sqrt{\bm{a}_1(\bm{x})^\top\bm{\mathrm{{\Sigma}}}\bm{a}_1(\bm{x})}=C\} $, where $C$ is a positive constant. Then for a given $t$, the hinge-loss approximation \eqref{eq_alsox_elb} becomes
				\begin{equation}
				\begin{aligned}
				(\bm{x}^*,\alpha^*)\in \argmin_{\bm{x}\in \mathcal{X},\alpha } \biggl\{ & C \left(\overline{G}(\alpha^2/2)-\alpha+\alpha\mathrm{\Phi}(\alpha) \right)\colon \bm{c}^\top \bm{x} \leq t,\\
				&(b_1(\bm x)-\bm\mu^\top \bm{a}_1(\bm {x}))/C =\alpha \biggr\}.\label{eq_alsox_elb1}
				\end{aligned}
				\end{equation}

				Let $f(\alpha)=\overline{G}(\alpha^2/2)-\alpha+\alpha\mathrm{\Phi}(\alpha)$. 
				Since its derivative is
				$$\partial f(\alpha)/\partial \alpha=-k\alpha \hat g(\alpha^2/2)-1+\mathrm{\Phi}(\alpha)+k\alpha \hat g(\alpha^2/2)=\mathrm{\Phi}(\alpha)-1<0,$$
				function $f(\alpha)$ is monotone decreasing over $\alpha \in \Re$. Thus, for any $t\geq v^*$, i.e., there exists an $\hat{\bm{x}}$ which is feasible to CCP \eqref{ccp_el} such that
				$$(b_1(\hat{\bm{x}})-\bm\mu^\top \bm{a}_1(\hat{\bm{x}}))/C\geq \mathrm{\Phi}^{-1}(1-\varepsilon), \bm{c}^\top \hat{\bm{x}}\leq t.$$
				
				Clearly, $(\hat{\bm{x}},\hat{\alpha}=\mathrm{\Phi}^{-1}(1-\varepsilon))$ is feasible to the hinge-loss approximation \eqref{eq_alsox_elb1}. Due to the monotonicity of the objective function, we must have $\alpha^*\geq \hat{\alpha}=\mathrm{\Phi}^{-1}(1-\varepsilon)$, i.e.,
				\[b_1(\bm {x}^*)-\bm{\mu}^\top\bm{a}_1(\bm {x}^*)\geq \mathrm{\Phi}^{-1}(1-\varepsilon) C.\]
				Hence, we must have $\bm {x}^*$ is also feasible to CCP \eqref{ccp_el}, i.e., $v^A\leq v^*$. On the other hand, we always have $v^A\geq v^*$. Thus, $v^A=v^*$.
				
				\item Suppose that $\mathcal{X}\subseteq\{\bm x:b_1(\bm x)-\bm\mu^\top \bm{a}_1(\bm{x})=C\} $, where $C$ is an arbitrary constant. Let us denote $\sigma =\sqrt{\bm{a}_1^\top(\bm {x})\bm{\mathrm{{\Sigma}}}\bm{a}_1(\bm {x})} $, and we have $\alpha=C/\sigma$. Then for a given $t$, the hinge-loss approximation \eqref{eq_alsox_elb} becomes
				\begin{equation}
				(\bm{x}^*,\sigma^*)\in \argmin_{\bm{x}\in \mathcal{X},\sigma} \left\{ \sigma f(C/\sigma)\colon \sqrt{\bm{a}_1(\bm{x})^\top\bm{\mathrm{{\Sigma}}}\bm{a}_1(\bm{x})}=\sigma, \bm{c}^\top \bm{x} \leq t\right\},\label{eq_alsox_elb2}
				\end{equation}
				where function $f(\cdot)$ is defined in the proof of Part (i). 
				Taking the derivative of the objective function with respect to $\sigma$, we have 
				\begin{equation*}
				\frac{\partial (\sigma f(C/\sigma))}{\partial \sigma} = f(C/\sigma)-C/\sigma \frac{\partial f(\alpha)}{\partial\alpha} \big|_{\alpha=C/\sigma}= \overline{G}(C^2/(2\sigma^2))> 0.
				\end{equation*}
				Thus, the objective function in the hinge-loss approximation is always monotone increasing with respect to $\sigma$. Thus, for any $t\geq v^*$, i.e., there exists a $\hat{\bm{x}}$ which is feasible to CCP \eqref{ccp_el} such that
				$$C\geq \mathrm{\Phi}^{-1}(1-\varepsilon) \sqrt{\bm{a}_1(\hat{\bm{x}})^\top\bm{\mathrm{{\Sigma}}}\bm{a}_1(\hat{\bm{x}})}, \bm{c}^\top \hat{\bm{x}}\leq t.$$
				
				Clearly, $(\hat{\bm{x}},\hat{\sigma}=\sqrt{\bm{a}_1(\hat{\bm{x}})^\top\bm{\mathrm{{\Sigma}}}\bm{a}_1(\hat{\bm{x}})})$ is feasible to the hinge-loss approximation \eqref{eq_alsox_elb1}. Due to the monotonicity of the objective function, we must have $\sigma^*\leq \hat{\sigma}\leq C/\mathrm{\Phi}^{-1}(1-\varepsilon)$, i.e.,
				\[b_1(\bm {x}^*)-\bm{\mu}^\top\bm{a}_1(\bm {x}^*):=C\geq \mathrm{\Phi}^{-1}(1-\varepsilon) \sqrt{\bm{a}_1(\bm{x}^*)^\top\bm{\mathrm{{\Sigma}}}\bm{a}_1(\bm{x}^*)}.\]
				Hence, we must have $\bm {x}^*$ is also feasible to CCP \eqref{ccp_el}, i.e., $v^A\leq v^*$. On the other hand, we always have $v^A\geq v^*$. Thus, $v^A=v^*$. 	\QEDA
				
			\end{enumerate}
			
		\end{proof}

%
		
		\subsection{Proof of \Cref{thm_ccp}}\label{proof_thm_ccp}\thmccp*
		
		\begin{proof}
			First, without loss of generality, we assume $v^{rel}>0$; otherwise, we must have $v^{rel}=v^{A}=v^*=0$ due to the covering structure. Next, we split the proof into three steps.
			
			\noindent{\textbf{Step 1.}} Let $\alpha,\{ \bm\beta_i\}_{i\in [N]}$ be the dual variables of the constraints of the continuous relaxation \eqref{covering_relax}, respectively. Then the dual of the continuous relaxation \eqref{covering_relax} is 
			\begin{subequations}
				\label{covering_relax_1_D}
				\begin{align}
				v^{rel}=\max_{\alpha\geq 0, \bm \beta_i\geq \bm 0,\forall i\in [N]}\quad& -\floor{N\varepsilon}\alpha +\sum_{i\in[N]}\bm\beta_i^\top \bm e,\label{covering_relax_1_Da}\\
				\text{s.t.}\ & \sum_{i\in[N]} {\bm{A}^i}^\top\bm\beta_i\leq \bm c, \label{covering_relax_1_Db}\\
				& \bm\beta_i^\top\bm e\leq\alpha,\label{covering_relax_1_Dc}
				\end{align}
			\end{subequations}
			where the strong duality holds since the continuous relaxation \eqref{covering_relax} is always feasible.
			
			For any $t\geq 0$, let $\gamma, \{\bm\omega_i\}_{i\in[N]}$ be the dual variables of the constraints of the hinge-loss approximation \eqref{covering_hinge}, respectively. The dual of the hinge-loss approximation \eqref{covering_hinge} is
			\begin{subequations}
				\label{covering_alsox_1_D}
				\begin{align}
				v^{A}(t)=\max_{\gamma\geq 0, \bm \omega_i\geq \bm 0,\forall i\in [N]}\quad& -t\gamma +\sum_{i\in[N]}\bm\omega_i^\top \bm e,\label{covering_alsox_1_Da}\\
				\text{s.t.}\ & \sum_{i\in[N]} {\bm{A}^i}^\top\bm\omega_i\leq \gamma \bm c, \label{covering_alsox_1_Db}\\
				& \bm\omega_i^\top\bm e\leq \frac{1}{N}, \label{covering_alsox_1_Dc}
				\end{align}
			\end{subequations}
			where the strong duality also holds since the hinge-loss approximation \eqref{covering_hinge} is always feasible.
			
			\noindent{\textbf{Step 2.}} Next, we prove the result by contradiction. Suppose that in the hinge-loss approximation \eqref{covering_hinge}, for a given $t\geq	v^{rel}(\floor{N\varepsilon}+1)>0$, there exists an optimal solution $(\bm x^*,\bm s^*)$, which is infeasible to the upper-level problem of $\alsox$ \eqref{covering}, i.e., $|\mathrm{supp}(\bm s^*)|\geq \floor{N\varepsilon}+1$. 
			Let $(\gamma^*,\{\bm\omega_i^*\}_{i\in[N]})$ denote an optimal dual solution of \eqref{covering_alsox_1_D}. Due to the complementary slackness, we must have
			\begin{equation}
			{\bm\omega_i^*}^\top\bm e=\frac{1}{N},\forall i\in [N]: s_i^*>0.\label{eq_omega_i_supp}
			\end{equation} 
			
			Let $(\hat{\bm x},\hat{\bm s})$ be an optimal solution to the continuous relaxation \eqref{covering_relax}. According to \cite{ahmed2018relaxations}, the scaled solution $((\floor{N\varepsilon}+1)\hat{\bm x},\min\{\ceil{(\floor{N\varepsilon}+1)\hat{\bm s}},\e\})$ is feasible to the covering CCP \eqref{covering_ccp} with objective value at most $(\floor{N\varepsilon}+1)v^{rel}$. Thus, the scaled solution $((\floor{N\varepsilon}+1)\hat{\bm x},\min\{\ceil{(\floor{N\varepsilon}+1)\hat{\bm s}},\e\})$ is also feasible to the hinge-loss approximation \eqref{covering_hinge} since $t\geq	v^{rel}(\floor{N\varepsilon}+1) >0$. Hence, we must have 
			\begin{equation}
			0<v^{A}(t)=\frac{1}{N}\sum_{i\in[N]}s^*_i=-t\gamma^* +\sum_{i\in[N]}{\bm\omega_i^*}^\top \bm e\leq \frac{\floor{N\varepsilon}}{N}.\label{eq_omega_i_opt}
			\end{equation}
			According to \eqref{eq_omega_i_supp}, we have 
			$$\sum_{i\in[N]}{\bm\omega_i^*}^\top \bm e\geq \frac{1}{N}|\mathrm{supp}(\bm s^*)|\geq \frac{\floor{N\varepsilon}+1}{N}.$$
			Together with the second inequality in \eqref{eq_omega_i_opt}, we must have $\gamma^*>0$.
			
			Also, the first inequality in \eqref{eq_omega_i_opt} implies that
			\[\gamma^*< \frac{\sum_{i\in[N]}{\bm\omega_i^*}^\top \bm e}{t}.\]
			
			\noindent{\textbf{Step 3.}} 
			Now, let us define $ \hat{\bm \beta}_i=\bm\omega_i^*/\gamma^*$ and $\hat{\alpha}=1/(N\gamma^*)$. Clearly, $(\hat{\alpha},\{\hat{\bm \beta}_i\}_{i\in [N]})$ is feasible to the dual \eqref{covering_relax_1_D} of the continuous relaxation, whose objective is equal to
			\begin{align*}
			-\floor{N\varepsilon}\hat{\alpha} +\sum_{i\in[N]}\hat{\bm \beta}_i^\top \bm e& = \frac{\sum_{i\in[N]}{\bm \omega_i^*}^\top \bm e-\floor{N\varepsilon}/N}{\gamma^*}> \frac{t(\sum_{i\in[N]}{\bm\omega_i^*}^\top \bm e-\floor{N\varepsilon}/N)}{\sum_{i\in[N]}{\bm\omega_i^*}^\top \bm e}\geq \frac{t}{\floor{N\varepsilon}+1}\geq v^{rel},
			\end{align*}
			where the first inequality is due to the fact that $\gamma^*< \sum_{i\in[N]}{\bm\omega_i^*}^\top \bm e/t$, the second inequality is because function $f(x)=(x-\floor{N\varepsilon}/N)/x$ is monotone increasing with respect to $x$ if $x\geq \floor{N\varepsilon}/N$, and the third one is due to the assumption that $t\geq v^{rel}(\floor{N\varepsilon}+1)>0$. This contradicts the weak duality that $ -\floor{N\varepsilon}\hat{\alpha} +\sum_{i\in[N]}\hat{\bm \beta}_i^\top \bm e\leq v^{rel}$.
			\QEDA
		\end{proof}

		\subsection{Proof of \Cref{prop_coveirng_ccp} }\label{proof_prop_coveirng_ccp}
		\propcoveirngccp*
		\begin{proof}
			Let us consider the following example.
			\begin{quote}
				\noindent\textbf{Example} Consider a CCP with $N$ equiprobable scenarios (i.e., $\Pr\{\tilde{\rxi}=\bm\xi^i\}=1/N$), risk level $\varepsilon>1/N$, set $\mathcal{X}=\Re_+^{\floor{N\varepsilon}+1}$, function $g(\bm{x},{\rxi})=1-{\rxi}^\top \bm{x}$, and 	$\bm\xi^i =\bm{e}_i$ for $ i\in[\floor{N\varepsilon}+1]$, $\bm\xi^i =\bm{e}$ for $ i\in[N]\setminus[\floor{N\varepsilon}+1]$. \QEDB
			\end{quote}
			In this example, the covering CCP \eqref{covering_ccp} reduces to
			\begin{equation*}
			v^*=	\min_{\bm{x}\in\Re^{\floor{N\varepsilon}+1}_+,\bm z \in\{0,1\}^N}\left\{ \sum_{i\in[\floor{N\varepsilon}+1]}x_i\colon
			\begin{aligned}
			& x_i\geq z_i, \forall i\in[\floor{N\varepsilon}+1],\\
			& \sum_{j\in[\floor{N\varepsilon}+1]}x_j\geq z_i,\forall i\in[N]\setminus[\floor{N\varepsilon}+1],\\
			& \sum_{i\in[N]}z_i \geq N-\floor{N\varepsilon}
			\end{aligned}
			\right\}
			\end{equation*}
			with the optimal value $v^*=1$.

			The corresponding $\alsox$ \eqref{covering} counterpart is
			\begin{align*}
			v^A= \min_{t} &\quad t,\\
			\text{s.t.}\quad&(\bm x^*, \bm s^*)\in	\argmin_{\bm{x}\in\Re^{\floor{N\varepsilon}+1}_+,\bm s\in\Re_+^{N}}\left\{ \frac{1}{N}\sum_{i\in[N]}s_i\colon
			\begin{aligned}
			& x_i\geq 1-s_i, \forall i\in[\floor{N\varepsilon}+1],\\
			& \sum_{j\in[\floor{N\varepsilon}+1]}x_j\geq 1-s_i,\forall i\in[N]\setminus[\floor{N\varepsilon}+1],\\
			& \sum_{i\in[\floor{N\varepsilon}+1]}x_i\leq t
			\end{aligned}
			\right\},\\
			& \sum_{i\in[N]}	\I({s^*_i}=0) \geq N-\floor{N\varepsilon}.
			\end{align*}
			For any $1\leq t<\floor{N\varepsilon}+1$, an optimal solution $(\bm x^*, \bm s^*)$ of the hinge-loss approximation is 
			$$x_j^*=\frac{t}{\floor{N\varepsilon}+1},\forall j\in [\floor{N\varepsilon}+1],\quad s_i^* =\begin{cases}
			\frac{\floor{N\varepsilon}+1-t}{\floor{N\varepsilon}+1}>0, &\text{ if }i\in [\floor{N\varepsilon}+1]\\
			0,&\text{otherwise}
			\end{cases}, \forall i \in [N],
			$$
			which violates the chance constraint since $ \sum_{i\in[N]}	\I({s^*_i}=0) =N-\floor{N\varepsilon}-1<N-\floor{N\varepsilon}$.
			On the other hand, if $t\geq \floor{N\varepsilon}+1$, the optimal solution $(\bm x^*, \bm s^*)$ of the hinge-loss approximation is $\bm x^*=\e,\bm{s}^*=\bm{0}$, which satisfies the chance constraint. Thus, in this example, we have $v^A=\floor{N\varepsilon}+1=(\floor{N\varepsilon}+1)v^*$. This completes the proof.
			\QEDA
			
		\end{proof}
		
		\vspace{15pt}
		
		\noindent\textbf{\large Proofs in Section~\ref{ap}}
		
		\vspace{5pt}

		\subsection{Proof of \Cref{alsox_convergence}}\label{proof_alsox_convergence}\alsoxconvergence*
		
		\begin{proof}
			At iteration $k+1$ of $\AM$ \Cref{alg_am}, the optimality condition of problem \eqref{basic_ap_z} implies that
			\begin{align*}
			\E\left[z^{k+1}(\tilde{\rxi})s^{k}(\tilde{\rxi})\right] \leq \E\left[z^k(\tilde{\rxi})s^{k}(\tilde{\rxi})\right].
			\end{align*}	
			Similarly,	the optimality condition of problem \eqref{basic_ap_1} implies that 
			\begin{align*}
			\E\left[z^{k+1}(\tilde{\rxi})s^{k+1}(\tilde{\rxi})\right] \leq \E\left[z^{k+1}(\tilde{\rxi})s^k(\tilde{\rxi})\right].
			\end{align*}
			Therefore, we have 
			\begin{align*}
			\E\left[z^{k+1}(\tilde{\rxi})s^{k+1}(\tilde{\rxi})\right] \leq \E\left[z^{k+1}(\tilde{\rxi})s^k(\tilde{\rxi})\right]\leq \E\left[z^k(\tilde{\rxi})s^{k}(\tilde{\rxi})\right].
			\end{align*}
			Thus, the sequence of $\E[z^k(\tilde{\rxi})s^k(\tilde{\rxi})]$ is monotonically nonincreasing. The fact that both $s^k({\rxi})$ and $z^k({\rxi})$ are nonnegative implies that $\E[z^k(\tilde{\rxi})s^k(\tilde{\rxi})]\geq 0$ at any iteration $k$. Hence, Monotone Convergence Theorem (see, e.g., thereon 7.16 in \citealt{rudin1964principles}) implies that the sequence of objective values $\{\E[z^{k}(\tilde{\rxi})s^{k}(\tilde{\rxi})]\}_{k\in \Ze_+}$ is convergent. \QEDA
		\end{proof}
		
		\vspace{15pt}
		
		\noindent\textbf{\large Proofs in Section~\ref{drccp}}
		
		\vspace{5pt}

		{\subsection{Proof of \Cref{the_infinity_drccp}}\label{proof_the_infinity_drccp}\theinfinitydrccp*}
		\begin{proof}
			We first prove the following claim.
			\begin{claim}\label{claim1}
				{For any $\Pr-$measurable function $f(\bm{\xi}):\Xi\rightarrow \Re$ with $\Pr\in \P_{\infty}^W$,} 
				we must have
				\[\sup _{\Pr\in\mathcal{P}^W_{\infty}} \E_\Pr[f(\tilde{\rxi})]=\E_{\Pr_{\tilde{\rzeta}}}\left[\sup_{\rxi}\left\{f(\rxi): \|\rxi-\tilde{\rzeta}\|\leq \theta\right\}\right].\]
			\end{claim}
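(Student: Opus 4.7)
My plan is to establish the identity by proving the two inequalities separately, using the coupling representation of the $\infty$-Wasserstein distance. Let $h(\rzeta):=\sup_{\rxi}\{f(\rxi): \|\rxi-\rzeta\|\leq\theta\}$; measurability/integrability of $h$ under $\Pr_{\trzeta}$ follows from Assumption~\ref{A_4} together with standard measurable projection / normal-integrand arguments applied to the closed-valued multifunction $\rzeta\mapsto \{\rxi:\|\rxi-\rzeta\|\leq\theta\}$.

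For the upper bound ($\leq$), I would take any $\Pr\in\P_\infty^W$ and use that $W_\infty(\Pr,\Pr_{\trzeta})\leq\theta$ is equivalent to the existence of a coupling $\Q$ of $(\trxi,\trzeta)$ with marginals $\Pr$ and $\Pr_{\trzeta}$ satisfying $\|\trxi-\trzeta\|\leq\theta$ $\Q$-almost surely (the infimum in the $\infty$-Wasserstein definition is attained under Assumption~\ref{A_4} by weak compactness; see, e.g., the coupling/Strassen-type results cited in \citet{bertsimas2018data}). Then
\begin{align*}
\E_\Pr[f(\trxi)] \;=\; \E_\Q[f(\trxi)] \;\leq\; \E_\Q\!\left[\sup_{\rxi:\|\rxi-\trzeta\|\leq\theta} f(\rxi)\right] \;=\; \E_{\Pr_{\trzeta}}[h(\trzeta)],
\end{align*}
and taking the supremum over $\Pr\in\P_\infty^W$ gives the $(\leq)$ direction.

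For the lower bound ($\geq$), my plan is to construct a feasible $\Pr\in\P_\infty^W$ whose expectation approaches the right-hand side. Fix $\epsilon>0$; by a measurable selection theorem (e.g., the Kuratowski--Ryll-Nardzewski theorem applied to the closed-valued map $\rzeta\mapsto\{\rxi:\|\rxi-\rzeta\|\leq\theta,\, f(\rxi)\geq h(\rzeta)-\epsilon\}$), there exists a measurable map $T_\epsilon:\Xi\to\Xi$ with $\|T_\epsilon(\rzeta)-\rzeta\|\leq\theta$ and $f(T_\epsilon(\rzeta))\geq h(\rzeta)-\epsilon$ for $\Pr_{\trzeta}$-a.e.\ $\rzeta$. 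Let $\Pr_\epsilon$ be the pushforward of $\Pr_{\trzeta}$ under $T_\epsilon$, and define the coupling $\Q_\epsilon$ as the joint law of $(T_\epsilon(\trzeta),\trzeta)$. Then $\Q_\epsilon$ has marginals $\Pr_\epsilon$ and $\Pr_{\trzeta}$, and $\|\trxi-\trzeta\|\leq\theta$ $\Q_\epsilon$-a.s., which certifies $W_\infty(\Pr_\epsilon,\Pr_{\trzeta})\leq\theta$, i.e.\ $\Pr_\epsilon\in\P_\infty^W$. Consequently,
\begin{align*}
\sup_{\Pr\in\P_\infty^W}\E_\Pr[f(\trxi)]\;\geq\;\E_{\Pr_\epsilon}[f(\trxi)]\;=\;\E_{\Pr_{\trzeta}}[f(T_\epsilon(\trzeta))]\;\geq\;\E_{\Pr_{\trzeta}}[h(\trzeta)]-\epsilon,
\end{align*}
and letting $\epsilon\downarrow 0$ completes the proof.

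The main obstacle I anticipate is the measurability and integrability of the selection: one needs (i) that $h$ is $\Pr_{\trzeta}$-measurable (handled by a projection theorem because $\{(\rzeta,\rxi):\|\rxi-\rzeta\|\leq\theta\}$ is closed and $f$ is measurable), (ii) that a measurable $\epsilon$-optimal selector $T_\epsilon$ exists (handled by Kuratowski--Ryll-Nardzewski), and (iii) that the expectations are finite, which is where Assumption~\ref{A_4} is invoked (sub-Gaussianity of $\Pr_{\trzeta}$ together with the bounded displacement $\|T_\epsilon(\trzeta)-\trzeta\|\leq\theta$ controls moments). Aside from this measure-theoretic bookkeeping, the two-sided argument via couplings is straightforward.
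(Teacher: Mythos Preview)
Your proof is correct and takes a more direct route than the paper. The paper does not argue via the coupling characterization of $W_\infty$; instead, it invokes the strong duality results of \cite{gao2016distributionally} and \cite{blanchet2019quantifying} for finite-$q$ Wasserstein balls, writes $\sup_{\Pr\in\mathcal{P}_q^W}\E_\Pr[f(\trxi)]=\min_{\lambda\geq 0}\bigl\{\lambda\theta^q+\E_{\Pr_{\trzeta}}[\sup_{\rxi}\{f(\rxi)-\lambda\|\rxi-\trzeta\|^q\}]\bigr\}$, interchanges the outer $\min_\lambda$ with the expectation and inner $\sup_{\rxi}$ to obtain the bound $\sup_{\Pr\in\mathcal{P}_q^W}\E_\Pr[f(\trxi)]\geq \E_{\Pr_{\trzeta}}[h(\trzeta)]$, and then passes to the limit $q\to\infty$ using the monotonicity of $W_q$ in $q$. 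For the other direction, the paper---like you---constructs a transported distribution by selecting $\trxi\in\arg\max_{\rxi}\{f(\rxi):\|\rxi-\trzeta\|\leq\theta\}$. Your argument is more elementary and self-contained: by working directly with the coupling definition of $W_\infty$, you avoid the detour through finite-$q$ duality and the limiting step, and your explicit invocation of Kuratowski--Ryll-Nardzewski for an $\epsilon$-optimal selector is more careful than the paper's one-line $\arg\max$ construction (which silently assumes the maximum is attained and measurably selectable). The paper's route, on the other hand, makes the link to the broader finite-$q$ DRO duality literature explicit, which can be convenient when one wants uniform statements across all $q$.
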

			\proof
			It is sufficient to prove that 
			$$\lim_{q\rightarrow \infty}\sup _{\Pr\in\mathcal{P}^W_{q}} \E_\Pr[f(\tilde{\rxi})]=\E_{\Pr_{\tilde{\rzeta}}}\left[\sup_{\rxi}\left\{f(\rxi): \|\rxi-\tilde{\rzeta}\|\leq \theta\right\}\right].$$
			For any $q\geq 1$, according to theorem 1 in \cite{gao2016distributionally} or theorem 1 in \cite{blanchet2019quantifying}, $\sup _{\Pr\in\mathcal{P}^W_{q}} \E_\Pr[f(\tilde{\rxi})]$ can be reformulated as
			\begin{equation*}
			\sup _{\Pr\in\mathcal{P}^W_{q}} \E_\Pr[f(\tilde{\rxi})]= \min_{\lambda\geq 0} \lambda\theta^q+\E_{\Pr_{\tilde\rzeta}}\left[\sup_{{\bm{\xi}}}\left\{f(\rxi)-\lambda\left\|{\bm{\xi}}-\tilde{\rzeta}\right\|^q \right\} \right].
			\end{equation*}
			First of all, interchanging the expectation and inner supremum operator with the outer minimum operator, we arrive at the lower bound as
			\begin{equation*}
			\sup _{\Pr\in\mathcal{P}^W_{q}} \E_\Pr[f(\tilde{\rxi})]\geq \E_{\Pr_{\tilde\rzeta}}\left[\sup_{{\bm{\xi}}}\left\{f(\rxi)+\min_{\lambda\geq 0}\left\{\lambda\theta^q-\lambda\left\|{\bm{\xi}}-\tilde{\rzeta}\right\|^q \right\}\right\} \right]:=\E_{\Pr_{\tilde{\rzeta}}}\left[\sup_{\rxi}\left\{f(\rxi): \|\rxi-\tilde{\rzeta}\|\leq \theta\right\}\right].
			\end{equation*}
			Since the $q-$Wasserstein distance is monotone nondecreasing as $q$ increases (according to Jensen's inequality), thus $\{\sup _{\Pr\in\mathcal{P}^W_{q}} \E_\Pr[f(\tilde{\rxi})]\}_{q\geq 1}$ is a monotone nonincreasing sequence and is bounded from below. Thus, its limit exists and thus, we have
			\begin{equation*}
			\sup _{\Pr\in\mathcal{P}^W_{\infty}} \E_\Pr[f(\tilde{\rxi})]:=\lim_{q\rightarrow \infty}\sup _{\Pr\in\mathcal{P}^W_{q}} \E_\Pr[f(\tilde{\rxi})]\geq \E_{\Pr_{\tilde{\rzeta}}}\left[\max_{\rxi}\left\{f(\rxi): \|\rxi-\tilde{\rzeta}\|\leq \theta\right\}\right].
			\end{equation*}
			On the other hand, let {us} define a random vector $\tilde{\rxi}$ as $\tilde{\rxi}\in\arg\max_{\rxi}\{f(\rxi): \|\rxi-\tilde{\rzeta}\|\leq \theta\}$. According to the definition, we have the $\infty-$Wasserstein distance $W_{\infty}(\Pr_{\tilde{\rxi}}, \Pr_{\tilde{\rzeta}})$ no larger than $\theta$. That is, $\Pr_{\tilde{\rxi}}\in \mathcal{P}^W_{\infty}$ and 
			\[ \E_{\Pr_{\tilde{\rxi}}}[f(\tilde{\rxi})]=\E_{\Pr_{\tilde{\rzeta}}}\left[\max_{\rxi}\left\{f(\rxi): \|\rxi-\tilde{\rzeta}\|\leq \theta\right\}\right].\]
			Thus, the lower bound $\E_{\Pr_{\tilde{\rzeta}}}[\max_{\rxi}\{f(\rxi): \|\rxi-\tilde{\rzeta}\|\leq \theta\}]$ of $\sup _{\Pr\in\mathcal{P}^W_{\infty}} \E_\Pr[f(\tilde{\rxi})]$ is attainable. This concludes the proof.
			\QEDB
			
			\noindent For the distributionally robust chance constraint in \eqref{eq_drccp}, according to \Cref{claim1}, we have
			\begin{equation*}
			\inf_{\Pr\in \mathcal{P}_\infty^W} \Pr\left\{\tilde{\bm\xi}\colon g_i(\bm x,\tilde{\bm\xi})\leq 0, \forall i\in[I]\right\}= \Pr_{\tilde\rzeta}\left\{\tilde\rzeta\colon g_i(\bm x,\trxi)\leq 0,\forall \|\tilde{\bm\xi}-\tilde{\rzeta}\|\leq \theta, \forall i\in[I] \right\}.
			\end{equation*}
			{Recall the definition of $\bar g_i(\bm x,\rzeta)$, we have the following equivalent representation of \eqref{eq_drccp}
				\begin{equation*}
				\Pr_{\tilde\rzeta}\left\{{\tilde\rzeta}\colon \bar g_i(\bm x,\trzeta)\leq 0, \forall i \in [I]\right\} \geq 1-\varepsilon.
				\end{equation*}}
			This completes the proof.
			\QEDA
		\end{proof}

		{\subsection{Proof of \Cref{the_infinity_alsox}}\label{proof_the_infinity_alsox}\theinfinityalsox*}
		\begin{proof}
			{	We split the proof into two parts by proving the reformulations of the worst-case $\alsox$ and the worst-case $\CVaR$ approximation, separately.}
			\begin{enumerate} [label=(\roman*)]
				\item 	According to \Cref{claim1} in Appendix~\ref{proof_the_infinity_drccp}, we can rewrite the objective function of the worst-case hinge-loss approximation \eqref{drccp_alsox} under $\infty-$Wasserstein ambiguity set as
				\begin{align*}
				\sup _{\Pr\in\mathcal{P}^W_{\infty}} \E_\Pr\left[\max_{i\in [I]}g_i(\bm x,\rxi) _+\right]=	\E_{\Pr_{\tilde{\rzeta}}}\left\{ \max_{{\rxi}}\max_{i\in[I]}\left\{g_i(\bm x,\rxi)\right\}_+\colon {\left\|{\rxi}-\tilde\rzeta\right\|\leq \theta}\right\} .
				\end{align*}
				Interchanging the maximum operators, we have 
				\begin{align*}
				\sup _{\Pr\in\mathcal{P}^W_{\infty}} \E_\Pr\left[\max_{i\in [I]}g_i(\bm x,\rxi) _+\right]= 	\E_{\Pr_{\tilde{\rzeta}}}\left\{\max_{i\in[I]} \max_{{\rxi}}\left\{g_i(\bm x,\rxi)\right\}_+\colon {\left\|{\rxi}-\tilde\rzeta\right\|\leq \theta}\right\}.	
				\end{align*}
				{According to the definition of functions $\{\bar g_i(\cdot,\cdot)\}_{i\in [I]}$, we can further rewrite the objective function as
					\begin{align*}
					\sup _{\Pr\in\mathcal{P}^W_{\infty}} \E_\Pr\left[\max_{i\in [I]}g_i(\bm x,\rxi) _+\right]= 	\E_{\Pr_{\tilde{\rzeta}}}\left[\max_{i\in[I]}\bar g_i(\bm x,\rzeta)_+  \right].	
					\end{align*}
					Thus, the worst-case hinge-loss approximation \eqref{drccp_alsox} is equivalent to
					\begin{align*}
					\bm x^*\in \argmin _{\bm {x}\in \mathcal{X}}\left\{ \E_\Pr\left[\max_{i\in [I]}\bar g_i(\bm x,\tilde{\bm\xi})_+\right]\colon \bm{c}^\top \bm{x} \leq t \right\}.
					\end{align*}
					According to \Cref{the_infinity_drccp}, the worst-case chance constraint is equivalent to the regular chance constraint \eqref{eq_drccp_infty}. Therefore, we arrive at the reformulation \eqref{drccp_alsox_infity_eq}.}
				
				\item In the worst-case $\CVaR$ approximation  \eqref{worse_case_cvar_q}, we can interchange the supremum operator with the minimum one, since $\Pr_{\tilde\rzeta}$ is sub-Gaussian, the definition of $\infty-$Wasserstein ambiguity set shows that for any $t\geq 0$ and $\Pr\in\P_{\infty}^W$, we have
				$$\Pr\{\tilde{\bm\xi}: \|\tilde{\bm\xi}\|\geq t+\theta\} =\Pr_{\tilde{\rzeta}}\{\tilde{\rzeta}: \|\tilde{\bm\xi}\|\geq t+\theta,\|\tilde{\bm\xi}-\tilde{\rzeta}\|\leq \theta\}\leq \Pr_{\tilde{\rzeta}}\{\tilde{\rzeta}: \|\tilde{\rzeta}\|\geq t\}\leq C_1e^{-C_2t^2},$$
				for some positive constants $C_1,C_2>0$, and thus $\P_{\infty}^W$ is weakly compact. Thus, {according to corollary \cite{terkelsen1972some}, the worst-case $\CVaR$ approximation is equivalent to}
				\begin{equation*}
				v_\infty^\CVaR=\min _{\bm {x}\in \mathcal{X}}\left\{\bm c^\top\bm x\colon \min_{\beta}\left[ \beta+\frac{1}{\varepsilon}\sup_{\Pr\in \mathcal{P}^W_{\infty}}\E_{\Pr}\left\{\max_{i\in[I]}\left\{g_i(\bm x,\rxi)\right\} -\beta\right\}_+\right]\leq 0 \right\}.
				\end{equation*}
				According to \Cref{claim1} in Appendix~\ref{proof_the_infinity_drccp}, the worst-case $\CVaR$ approximation becomes
				\begin{align*}
				v_\infty^\CVaR=\min _{\bm {x}\in \mathcal{X}}\left\{\bm c^\top\bm x\colon\min_{\beta}\left[ \beta+\frac{1}{\varepsilon} \E_{\Pr_{\tilde\rzeta}}\left[\max_{\rxi}	\left\{ \max_{i\in[I]}\left\{g_i(\bm x,\rxi)\right\} -\beta\right\}_+\colon {\left\|{\rxi}-\tilde{\rzeta}\right\|\leq \theta}\right]\right]\leq 0 \right\}.
				\end{align*}
				Interchanging the maximum operators and taking the optimization over $\rxi$, we have 
				{\begin{align*}
					v_\infty^\CVaR=\min _{\bm {x}\in \mathcal{X}}\left\{\bm c^\top\bm x\colon \min_{\beta}\left[ \beta+\frac{1}{\varepsilon} \E_{\Pr_{\tilde\rzeta}}\left\{\max_{i\in[I]} \left\{\bar g_i(\bm x,\rzeta)\right\}-\beta\right\}_+\right]\leq 0 \right\}.
					\end{align*}}
				This completes the proof.
				\QEDA
			\end{enumerate}
		\end{proof}

		\subsection{Proof of \Cref{the_infinity}}\label{proof_the_infinity}\theinfinitytheorem*
		\begin{proof}
			According to {\Cref{the_infinity_alsox}}, the worst-case $\alsox$ and the worst-case $\CVaR$ approximation correspond to the same regular chance constrained program. Based on \Cref{the1}, we know that for a regular CCP, $\alsox$ yields a better 
			solution than that of $\CVaR$ approximation. Thus, the worst-case $\alsox$ can return a better solution than that of the worst-case $\CVaR$ approximation for DRCCP under $\infty-$Wasserstein ambiguity set.
			\QEDA
		\end{proof}

		\subsection{Proof of \Cref{exact_el_drccp}}\label{proof_exact_el_drccp}\exacteldrccp*
		
		\begin{proof}	\begin{subequations}
				{	According to \Cref{hinge_loss_truncated_el}  and \Cref{the_infinity_alsox},}	for a given $t$,  the worst-case hinge-loss approximation under $\infty-$Wasserstein ambiguity set is equivalent to
				{	\begin{equation}\label{eq_hinge_ellitptical2}
					(\bm{x}^*,\alpha^*) \in	\argmin_{\bm{x}\in \mathcal{X},\alpha }\biggl\{\sqrt{\bm{a}_1(\bm{x})^\top\bm{\mathrm{{\Sigma}}}\bm{a}_1(\bm{x})} \left[ f({\alpha}-\theta)\right]\colon \bm{c}^\top \bm{x} \leq t ,\frac{b_1(\bm x)-\bm\mu^\top \bm{a}_1(\bm {x})}{\sqrt{\bm{a}_1(\bm{x})^\top\bm{\mathrm{{\Sigma}}}\bm{a}_1(\bm {x})}} =\alpha \biggr\},
					\end{equation}}
				where 
				$f(\alpha)=\overline{G}(\alpha^2/2)-\alpha+\alpha\mathrm{\Phi}(\alpha)$.  
				
				Next, we split the proof into two parts by checking two sufficient conditions separately.
				\begin{enumerate} [label=(\roman*)]
					\item Suppose that $\mathcal{X}\subseteq\{\bm x:\sqrt{\bm{a}_1(\bm{x})^\top\bm{\mathrm{{\Sigma}}}\bm{a}_1(\bm{x})}=C\} $, where $C$ is a positive constant. Then, the worst-case hinge-loss approximation can be simplified as
					{	\begin{equation}\label{eq_hinge_ellitptical3}
						(\bm{x}^*,\alpha^*) \in	\argmin_{\bm{x}\in \mathcal{X},\alpha }\biggl\{F_1({\alpha}):=C\left[ f(\alpha-\theta)\right]\colon \bm{c}^\top \bm{x} \leq t,
						\frac{b_1(\bm x)-\bm\mu^\top \bm{a}_1(\bm {x})}{C} =\alpha\biggr\}.
						\end{equation}}
					Since the first-order derivative of $F_1(\cdot)$ is 
					\begin{equation*}
					\frac{\partial F_1({\alpha}) }{\partial {\alpha}} = {C} (\mathrm{\Phi}(\alpha-\theta)-1) <0,
					\end{equation*}
					{	function $	F_1(\alpha)$ is monotone decreasing over ${\alpha}\in\Re$. }
					
					{According to problem \eqref{ccp_el} and \Cref{the_infinity_drccp}}, we can rewrite 
					DRCCP  \eqref{eq_drccp} as
					{	\begin{align}
						v_\infty^*  =\min _{\bm {x}\in \mathcal{X}} \left\{\bm{c}^\top\bm{x}\colon b_1(\bm x)-\bm{\mu}^\top\bm{a}_1(\bm{x})\geq (\Phi^{-1}(1-\varepsilon)+\theta) \sqrt{\bm{a}_1(\bm{x})^\top\bm{\mathrm{{\Sigma}}}\bm{a}_1(\bm{x})}\right\}.
						\label{eq_drccp_infty1}
						\end{align}}
					Thus, for any $t\geq v_\infty^*$, there exists a feasible solution $\bar{\bm{x}}$ to DRCCP \eqref{eq_drccp_infty1}, such that
					{	\begin{align*}
						\bm{\mu}^\top\bm{a}_1(\bar{\bm{x}})-b_1(\bar{\bm{x}})+(\Phi^{-1}(1-\varepsilon)+\theta) {C}\leq 0 , \bm{c}^\top \bar{\bm{x}}\leq t.
						\end{align*}
						Let $\alpha'=1/{C}(b_1(\bar{\bm{x}})-	\bm{\mu}^\top\bm{a}_1(\bar{\bm{x}})) \geq \Phi^{-1}(1-\varepsilon)+\theta$.} Then, $(\bar{\bm{x}},\alpha')$ is feasible to the worst-case hinge-loss approximation \eqref{eq_hinge_ellitptical3}.
					
					Due to the monotonicity of the objective function $F_1(\cdot)$, we must have {$\alpha^*\geq \alpha'\geq \Phi^{-1}(1-\varepsilon)+\theta$}, i.e., 
					\begin{align*}
					\bm{\mu}^\top\bm{a}_1({\bm{x}}^*)-b_1({\bm{x}^*})+{C}{\alpha}^*\leq 0.
					\end{align*}
					Hence, we must have $\bm{x}^*$ is also feasible to DRCCP
					\eqref{eq_drccp_infty1}. This implies that the optimal value of the worst-case $\alsox$ \eqref{drccp_alsox_formulation} must be $v_\infty^A\leq v_\infty^*$. On the hand, we always have $v_\infty^A\geq v^*$. Thus, $v_\infty^A=v_\infty^*$.
					
					\item Suppose that $\mathcal{X}\subseteq\{\bm x:b_1(\bm x)-\bm\mu^\top \bm{a}_1(\bm {x})=C\} $, where $C$ is an arbitrary constant. Let us denote $\sigma = \sqrt{\bm{a}_1(\bm{x})^\top\bm{\mathrm{{\Sigma}}}\bm{a}_1(\bm{x})} $.
					Then the worst-case hinge-loss approximation can be simplified as 
					{	\begin{equation}\label{eq_hinge_ellitptical4}
						(\bm{x}^*,\hat{\alpha}^*,\sigma^*) \in	\argmin_{\bm{x}\in \mathcal{X},\hat{\alpha},\sigma }\biggl\{F_2(\sigma,\hat{\alpha}):=\sigma f(\hat{\alpha})\colon \bm{c}^\top \bm{x} \leq t ,
						\hat\alpha=\frac{C}{\sigma},
						\sigma = \sqrt{\bm{a}_1(\bm{x})^\top\bm{\mathrm{{\Sigma}}}\bm{a}_1(\bm{x})}\biggr\}.
						\end{equation}}
					The first-order derivative of $F_2(\cdot,\cdot)$ with respect to $\sigma$ is 
					{\begin{equation*}
						\frac{\partial F_2(\sigma,\hat{\alpha}) }{\partial \sigma} =  f(\hat{\alpha})+ \sigma \frac{ f(\hat{\alpha})}{\partial \hat{\alpha}} \frac{\partial \hat{\alpha} }{\partial \sigma} = f(\hat{\alpha})+ (1-\mathrm{\Phi}(\hat{\alpha}))\frac{C}{\sigma}>0.
						\end{equation*}}
					Thus, function $F_2(\sigma,\hat{\alpha})$ is monotone increasing over $\sigma\in\Re_+$. Thus, for any $t\geq v_\infty^*$, i.e., there exists a feasible solution $\hat{\bm{x}}$ to DRCCP \eqref{eq_drccp_infty} such that
					{	\begin{align*}
						C\geq (\Phi^{-1}(1-\varepsilon)+\theta)\sqrt{\bm{a}_1(\hat{\bm{x}})^\top\bm{\mathrm{{\Sigma}}}\bm{a}_1(\hat{\bm{x}})} , \bm{c}^\top \hat{\bm{x}}\leq t.
						\end{align*}}
					Let $\hat\sigma=\sqrt{\bm{a}_1(\hat{\bm{x}})^\top\bm{\mathrm{{\Sigma}}}\bm{a}_1(\hat{\bm{x}})}$ and $\hat{\alpha}'=(C/\hat\sigma)$. Then, $(\hat{\bm{x}},\hat{\alpha}',\hat\sigma)$ is feasible to the worst-case hinge-loss approximation \eqref{eq_hinge_ellitptical4}. Due to the monotonicity of the objective function $F_2(\cdot,\cdot)$ with respect to $\sigma$, we must have {${\sigma}^*\leq \hat{\sigma}\leq C/(\Phi^{-1}(1-\varepsilon)+\theta)$}, i.e., 
					{	\begin{align*}
						b_1({\bm{x}}^*)-\bm{\mu}^\top\bm{a}_1({\bm{x}}^*):=C\geq 
						(\Phi^{-1}(1-\varepsilon)+\theta){\sigma}^*=(\Phi^{-1}(1-\varepsilon)+\theta)
						\sqrt{\bm{a}_1(\bm{x}^*)^\top\bm{\mathrm{{\Sigma}}}\bm{a}_1(\bm{x}^*)}.
						\end{align*}}
					Hence, we must have ${\bm{x}}^*$ is also feasible to DRCCP
					\eqref{eq_drccp_infty}, i.e., $v_\infty^A\leq v_\infty^*$. On the other hand, we always have $v_\infty^A\geq v_\infty^*$. Thus, $v_\infty^A=v_\infty^*$.
					\QEDA
				\end{enumerate}
			\end{subequations}
		\end{proof}

		\newpage
		\section{Examples}\label{sec_append_example}
		\begin{example}
			\rm
			\label{example_alsox_cvar}
			Consider a CCP with 3 equiprobable scenarios (i.e., $N=3$ and $\Pr\{\tilde{{\xi}}=\xi^i\}=1/N$), risk level $\varepsilon=1/2$, set $\mathcal{X}=\Re_+$, function $g(\bm{x},{\xi})=-x+{\xi}$,
			and $\xi^1=3$, $\xi^2=2$, $\xi^3=1$. Then the optimal solution of this CCP \eqref{eq_ccp} can be obtained by solving the following mixed-integer linear program
			\begin{equation*}
			v^*=\min_{x\geq 0,\bm z}\left\{x\colon x\geq 3z_1, x\geq 2z_2, x\geq z_3, \sum_{i\in[3]}z_i\geq 2, \bm z\in\{0,1\}^3 \right\}.
			\end{equation*}
			Its $\alsox$ counterpart admits the following form
			{
				\begin{align*}
				v^A =\min _{ {t}}\, &\biggl\{ t\colon  \sum_{i\in[3]}	\I({s^*_i}=0) \geq 2,\\
				\quad \quad&( x^*,\bm s^*)\in\argmin _{x\geq 0,\bm s\geq \bm{0} }\biggl\{\frac{1}{3} \sum_{i\in[3]}s_i\colon x\geq 3-s_1, x\geq 2-s_2, x\geq 1-s_3,x\leq t\biggr\}\biggr\}.
				\end{align*} }
			%
			The $\CVaR$ approximation is
			\begin{equation*}
			v^\CVaR=\min_{x\geq 0,\beta\leq 0,\bm s}\left\{x\colon 3-x\leq s_1,2-x\leq s_2, 1-x\leq s_3, (s_1+s_2+s_3)/3-\beta/2\leq 0, s_i\geq \beta,\forall i\in[3] \right\}.
			\end{equation*}
			By the straightforward calculation, we obtain $v^*=2$, $v^A=2$, and $v^{\CVaR}=8/3$. \Cref{alsox_cvar_example_figure} illustrates their relationships, where the dotted line segment on the $x$-axis represents the feasible region.
			When $t=8/3$, the optimal solution from the hinge-loss approximation \eqref{eq_also_x_2b} is $s_1^*=1/3, s_2^*=s_3^*=0$, which means the second constraint and the third constraint are satisfied, while the first constraint is violated. The support size of $\bm{s}^*$ is 1, i.e., $|\mathrm{supp}(\bm{s}^*)|=1$, so the current solution is feasible to the upper-level problem in $\alsox$ \eqref{eq_also_x_2} and we {decrease} $t$. Finally, we can show that $\alsox$ has an optimal $t^*=2$. 
			\begin{figure}[h]\centering
				\begin{tikzpicture}
				\begin{axis}[
				domain=-1:4,
				xmin=0.5, xmax=3.55,
				samples=200,
				axis x line=middle,
				axis y line=none,
				xlabel={$x$},
				]
				\addplot[densely dotted,line width=2pt, domain = 2:3.5,-]
				{0};
				
				\filldraw[red] (1,0) circle (2pt);
				\filldraw[red] (2,0) circle (2pt);
				\filldraw[red] (3,0) circle (2pt);
				
				\draw[dashed,-] (8/3,0.3)--(8/3,-0.2) node[below] {$v^{\CVaR}$};
				\node [draw=none,-] at (2.8,0.4) {$t=\frac{8}{3}$} ;
				\draw[dashed,-] (2,0.3)--(2,-0.2) node[below] {$v^*,v^A$};
				\node [draw=none] at (2.1,0.4) {$t=2$} ;
				\draw [decorate,decoration={brace,amplitude=4pt},-]
				(8/3+0.01,0) -- (3-0.01,0) node [black,midway,above] 
				{\footnotesize $s_1$};
				\end{axis}
				\end{tikzpicture} 
				\caption{Illustration of \Cref{example_alsox_cvar}}. 
				\label{alsox_cvar_example_figure}
			\end{figure}
			\QEDB	
		\end{example}

		\begin{example}
			\rm
			\label{exactness_set_covering_example}
			Consider a CCP with 3 equiprobable scenarios (i.e., $N=3$, $\Pr\{\tilde{\rxi}=\bm\xi^i\}=1/N$), risk level $\varepsilon=1/3$, set $\mathcal{X}=\{0,1\}^2$, function $g(\bm{x},{\rxi})=1-{\rxi}^\top \bm{x}, $ and 
			$\bm\xi^1 =(1,0)^\top$, $\bm\xi^2 =(0,1)^\top$, $\bm\xi^3 =(1,1)^\top$.
			The optimal solution of this CCP can be obtained by solving the following mixed-integer linear program
			\begin{equation*}
			v^*=\min_{\bm{x}\in \{0,1\}^2,\bm{z}\in \{0,1\}^3}\left\{ x_1+2x_2\colon x_1\geq z_1,x_2\geq z_2,x_1+x_2\geq z_3,\sum_{i\in[3]}z_i\geq 2 \right\}
			\end{equation*}	
			with the optimal value $v^*=1$. \\
			The corresponding $\alsox$ \eqref{eq_also_x_2} is
			{
				\begin{align*}
				v^A =&\min _{t}\,\biggl\{ t\colon  \sum_{i\in[3]}	\I({s^*_i}=0) \geq 2, \\
				&( \bm x^*,\bm s^*)\in \argmin_{\bm{x}\in \{0,1\}^2,\bm{s}\geq \bm 0}\biggl\{\frac{1}{3} \sum_{i\in[3]}s_i\colon x_1\geq 1-s_1, x_2\geq 1-s_2, x_1+x_2\geq 1-s_3, x_1+2x_2\leq t\biggr\}\biggr\}.
				\end{align*}}
			For any $1<t<3$, the optimal solution is $x_1^*=1$, $ x_2^*=0$, $s_1^*=s_3^*=0$, $s_2^*=1>0$, the support size of $\bm{s}^*$ is $1$. Thus, this solution is feasible to CCP and we can decrease $t$ until $t=1$. Thus, the optimal value of $\alsox$ is also $v^A=1=v^*$. 
			\QEDB
		\end{example}
		
				\begin{example}
			\rm
			\label{exactness_el_example}
			Consider a single linear CCP with a Gaussian distribution (i.e., $\tilde{\rxi}\thicksim\mathcal{N}(\bm{\Bar{\mu}},\bm{\Bar{\mathrm{\Sigma}}})$) with $n=2,\bm{\Bar{\mu}}= [2,1]^\top$, $ \bm{\Bar{\mathrm{\Sigma}}}=\left[ \begin{smallmatrix} 1 & 0\\0 & 1 \end{smallmatrix} \right]$, 
			risk level $\varepsilon=0.05$, set $\mathcal{X}=\Re^2$ and function $g(\bm{x},{\rxi})=-1+{\rxi}^\top \bm{x}$. This example violates both conditions in \Cref{exact_el}. We {show} that $v^A>v^*$.
			
			First, in this example, CCP \eqref{ccp_el} becomes 
			\begin{equation*}
				v^* = \min_{\bm x\in\Re^2} \left\{-x_1-3x_2:1-2x_1-x_2\geq\mathrm{\Phi}^{-1}(1-\varepsilon)\sqrt{x_1^2+x_2^2} \right\},
			\end{equation*}
			and its approximate optimal value is $v^* =-1.55432$ with error bound $[-10^{-7}, 10^{-7}]$.
			
			The corresponding $\alsox$ \eqref{eq_alsox_el} is
			\begin{align*}
				v^A=\min_{t}\, & \biggl\{t\colon 1-2x_1^*-x_2^*\geq\mathrm{\Phi}^{-1}(1-\varepsilon)\sqrt{(x^*_1)^2+(x^*_2)^2}\\
				&\quad (x_1^*,x_2^*)\in\argmin_{\bm x\in\Re^2} \biggl\{ \sqrt{x_1^2+x_2^2} \bigg[ \varphi(\frac{1-2x_1-x_2}{\sqrt{x_1^2+x_2^2}}) -\frac{1-2x_1-x_2}{\sqrt{x_1^2+x_2^2}}\\ \quad\quad\quad\quad\quad\quad\quad&\quad+\frac{1-2x_1-x_2}{\sqrt{x_1^2+x_2^2}}\mathrm{\Phi}(\frac{1-2x_1-x_2}{\sqrt{x_1^2+x_2^2}}) \bigg] \colon -x_1-3x_2\leq t\biggr\}
				\biggr\}.
			\end{align*}
			
			Suppose that $t = -1.42$, an approximate optimal solution of the hinge-loss approximation \eqref{eq_alsox_elb} is $x_1^* \approx -0.375511$ with error bound $[-10^{-6},10^{-7}]$ and
			$	x_2^*\approx 0.598504 $ with error bound $[-3\times 10^{-7},10^{-7}]$. We see that any possible solution within the error box, i.e.,
			\begin{align*}
				&\max_{x_1,x_2}\left\{1-2x_1-x_2:x_1-x_1^*\in [-10^{-6},10^{-7}],x_2-x_2^*\in [-3 \times 10^{-7},10^{-7}] \right\}< 1.1526\\ <1.1620<&\max_{x_1,x_2}\left\{\mathrm{\Phi}^{-1}(1-\varepsilon)\sqrt{(x_1)^2+(x_2)^2}:x_1-x_1^*\in [-10^{-6},10^{-7}],x_2-x_2^*\in [-3\times10^{-7},10^{-7}] \right\}.
			\end{align*} 
			Therefore, we must have $v^A\geq -1.42>-1.55\geq v^*$, i.e., the solution from $\alsox$ \eqref{eq_alsox_el} is not exactly optimal to the CCP.
			\QEDB
		\end{example}
		
		\begin{example}
			\rm
			\label{alsox_not_exact_example}
			Consider a CCP with $3$ equiprobable scenarios (i.e., $\Pr\{\tilde{\rxi}=\bm\xi^i\}=1/3$ for each $i\in [3]$), risk level $\varepsilon=1/3$, set $\mathcal{X}=\Re_+^2$, function $g(\bm{x},{\rxi})=1-{\rxi}^\top \bm{x}$, and 	$\bm\xi^1 =(1,0)^\top$, and $\bm\xi^2=\bm\xi^3 =(1,1)^\top$. In this case, 
			the CCP is equivalent to the following mixed-integer linear program
			\begin{equation*}
			v^*=	\min_{\bm{x}\in\Re^2_+,\bm z \in\{0,1\}^3}\left\{ 3x_1+2x_2\colon x_1\geq z_1, x_1+x_2\geq z_2,x_1+x_2\geq z_3, z_1+z_2+z_3\geq 2	\right\}
			\end{equation*}
			with optimal value $v^*=2$.
			
			The corresponding $\alsox$ \eqref{eq_also_x_2} is
			{
				\begin{align*}
				&v^A= \min_{t} \, \biggl\{ t\colon \sum_{i\in[3]}	\I({s^*_i}=0) \geq 2,\\
				&(\bm x^*, \bm s^*)\in	\argmin_{\bm{x}\in\Re^2_+,\bm s \in\Re^3_+}\biggl\{ \frac{1}{3}\sum_{i\in[3]}s_i\colon 
				\begin{array}{l}
				\displaystyle  x_1\geq 1-s_1,x_1+x_2\geq 1-s_2,\\
				\displaystyle x_1+x_2\geq 1-s_3,3x_1+2x_2\leq t \end{array}\biggr\}\biggr\},
				\end{align*}}
			with the optimal $v^A=3$.
			
			For any $t\in [2,3)$, an optimal solution to the hinge-loss approximation is $x_1^*=t/3,x_2^*=0,s_1^*=s_2^*=s_3^*=1-t/3$. Invoking the $\AM$ \Cref{alg_am} with initial $s^0_i=s_i^*$ for each $i\in [3]$, we see that $(\bm x^*,\bm s^*,\bm z^*)$ with $z^*_1=1,z^*_2=1,z^*_3=0$ {is} a stationary point of the $\AM$ \Cref{alg_am}. Therefore, in this example, $\alsox +$ \Cref{alg_alsox_+} with the tolerance $\delta_1=0$ provides the same solution as $\alsox$ \eqref{eq_also_x_2}, and both fail to find an optimal solution of the CCP.
			\QEDB
		\end{example}

		\newpage
		\section{An Illustration of $\alsoxt$, $\CVaRt$ Approximation, and $\alsoxt+$ Algorithm}\label{Illustration_section}
		We use \Cref{am_dc_example} to provide a simple illustration of $\alsox$, $\CVaR$ approximation, and $\alsox+$ algorithm. The results are shown in \Cref{am_dc_example_figure}, where the non-convex shaded region denotes the feasible region of the corresponding CCP studied in \Cref{am_dc_example}, and points D and E are its optimal solutions with the optimal value $v^* = 0.5$. Point F is the best solution from $\CVaR$ approximation, which is quite far away from the true optimal solution. 
		
		Given $t=0.6$, the hinge-loss approximation \eqref{eq_also_x_2b} is to minimize the average of violations for all constraints. Due to the symmetry of the random parameters, we see that the interval between point A and point C is the set of its optimal solutions. If one were unlucky and picked any solution inside the interval (e.g., point B) rather than the boundary points, such a choice would end up with an infeasible solution to the CCP. 
		On the contrary, $\alsox +$ \Cref{alg_alsox_+} with the tolerance $\delta_1=0$ {breaks} the symmetry by circumventing the infeasible solutions like point B, and provide a better solution. For example, when $t= 0.6$ and $\alsox +$ \Cref{alg_alsox_+} starts at point B, it {selects} the two smallest constraint violations and move the solution to either point A or point C, which is feasible to the CCP. 
		
		Hence, in \Cref{am_dc_example}, we show that $\alsox+$ algorithm can find an optimal solution, while both the $\CVaR$ approximation and $\alsox$ \eqref{eq_also_x_2} may not.
		\begin{figure}[h]\centering
			\begin{tikzpicture}[scale=1]
			\begin{axis}[scale=1,
			unit vector ratio*=1 1 1,
			domain=-0.19:1.10,
			xmin=-0.19, xmax=1.10,
			ymin=-0.19, ymax=1.10,
			samples=200,
			axis y line=center,
			axis x line=middle,
			xtick={0,0.2,0.4,0.6,0.8,1.0},
			ytick={0,0.2,0.4,0.6,0.8,1.0},
			xlabel={$x_1$},
			ylabel={$x_2$}
			]
			\filldraw [line width=0pt,fill=gray!20](0,1.08) -- (0,0.5) -- (1/3,1/3)-- (1/2,0)-- (1.08,0)-- (5,5);
			\addplot+[mark=none,blue,-][domain=0:0.5] {1-2*x} node[pos=0.1](blue1){};
			\node [right,color=blue,-] at (blue1) {}; 
			\draw [blue, -> ] (0.1,0.8) -- (0.2,0.8) node [right] {\footnotesize\footnotesize\footnotesize{$2x_1 + x_2 \geq 1$}};
			\addplot+[mark=none,blue,-][domain=0:1] {0.5-1/2*x} node[pos=0.7](blue2){};
			\node [right,color=blue] at (blue2) {};
			\draw [blue, -> ] (0.8,0.1) -- (0.8,0.2) node [above] {\footnotesize\footnotesize{$x_1 + 2x_2 \geq 1$}};
			
			\addplot+[mark=none,blue,-][domain=0:0.5] {1/3-2/3*x} node[pos=0.7](blue3){};
			\node [left,color=blue] at (blue3) {};
			\addplot+[mark=none,black,style=dashed,-][domain=0:0.2] {0.6-x} node[pos=0](black1){};
			\addplot+[mark=none,black,style=dashed,-][domain=0.4:0.6] {0.6-x} node[pos=0](black1){};
			\draw [blue, -> ] (0.2,0.2) -- (0.18,0.1) node [below] {\footnotesize{$2x_1 + 3x_2 \geq 1$}};
			

			
			
			\addplot+[mark=none,black,style=dashed,-] [domain=0:2/3]{2/3-x} node[pos=0](black1){};
			
			\addplot+[mark=none,black,style=dashed,-][domain=0:0.5] {1/2-x} node[pos=0.15](black1){};
			
			\draw [red, -> ] (0.35,0.15) -- (0.3,-0.1) node [below] {\footnotesize$t=0.5$};
			
			\draw [red, -> ] (0.55,0.05) -- (0.52,-0.1) node [below] {\footnotesize$t=0.6$};
			
			\draw [red, -> ] (2/3,0) -- (0.75,-0.1) node [below] {\footnotesize$t=2/3$};
			
			
			
			\draw [very thick,dash dot,-] (0.2,0.4) -- (0.4,0.2);
			
			\filldraw[red] (0.2,0.4) circle (2pt) node[anchor=east] {A}; 
			\filldraw[red] (0.3,0.3) circle (2pt) node[anchor=east] {B}; 
			\filldraw[red] (0.4,0.2) circle (2pt) node[anchor=east] {C}; 
			\filldraw[red] (0.5,0) circle (0pt) node[anchor=south] {D}; 
			\filldraw[red] (1/3,1/3) circle (2pt) node[anchor=south west] {F};
			\filldraw[red] (0,0.5) circle (0pt) node[anchor=west] {E}; 
			\node[mark size=2pt,color=olive] at (0.5,0) {\pgfuseplotmark{square*}};
			\node[mark size=2pt,color=olive] at (0,0.5) {\pgfuseplotmark{square*}};
			\end{axis}	
			\end{tikzpicture}
			\caption{Illustration of $\alsox$, $\CVaR$ Approximation, and $\alsox +$ Algorithm using \Cref{am_dc_example}. Point D and point E, marked by solid square, denote the optimal solutions of the CCP, one of which {is} found by the $\alsox +$ algorithm. Point F shows the solution found by $\alsox$ and $\CVaR$. Three dashed lines denote objective function lines of the CCP with values equal to $t=2/3,0.6,0.5 $ (from top to bottom), respectively. In $\alsox$, when $t=0.6$, point A, point B, and point C are three distinct optimal solutions.}
			\label{am_dc_example_figure}
		\end{figure}
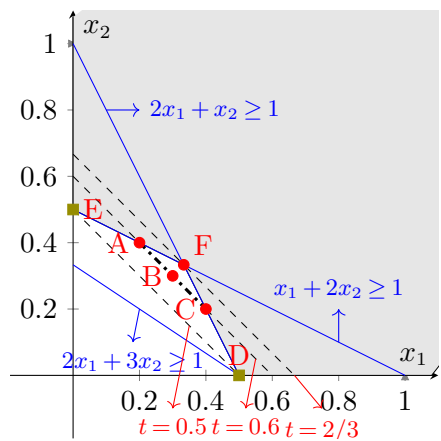

		\newpage
{	\section{The Closedness of the Feasible Region of Chance Constraint in CCP \eqref{eq_ccp}}\label{sec_append_lower_continuous_closed}
}
	{	\begin{proposition}
			\label{prop_lower_continuous_closed}
		Suppose set $\X\subseteq \Re^n$ is closed and function $g(\bm x,\trxi)$ is lower semi-continuous with respect to $\bm x$ with probability 1, then the feasible region of CCP \eqref{eq_ccp}
			\begin{align*}
				\X_1 = \left\{ \bm x\in \X\colon \Pr\left\{ g(\bm x,\trxi)\leq 0 \right\}\geq 1-\varepsilon\right\}
			\end{align*}
			is closed. 
		\end{proposition}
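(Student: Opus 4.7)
The plan is to establish closedness of $\X_1$ via the sequential characterization: take an arbitrary sequence $\{\bm x_k\} \subseteq \X_1$ with $\bm x_k \to \bm x^*$ and show $\bm x^* \in \X_1$. Since $\X$ is closed, membership $\bm x^* \in \X$ is immediate. The remaining work is to verify the probabilistic inequality $\Pr\{g(\bm x^*, \trxi)\leq 0\}\geq 1-\varepsilon$.

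The key idea is to exploit lower semi-continuity in conjunction with the reverse Fatou lemma for probability measures. First, I would define the measurable events $A_k := \{\trxi\in\Xi : g(\bm x_k,\trxi)\leq 0\}$, which satisfy $\Pr(A_k)\geq 1-\varepsilon$ for all $k$ by the feasibility of $\bm x_k$. Let $\Omega_0\subseteq \Xi$ denote the event of probability one on which $g(\cdot,\trxi)$ is lower semi-continuous. For any $\trxi \in (\limsup_k A_k)\cap \Omega_0$, there is a subsequence $\{k_j\}$ with $g(\bm x_{k_j},\trxi)\leq 0$, so lower semi-continuity yields
\begin{equation*}
g(\bm x^*,\trxi)\;\leq\;\liminf_{k\to\infty}g(\bm x_k,\trxi)\;\leq\;\liminf_{j\to\infty}g(\bm x_{k_j},\trxi)\;\leq\;0.
\end{equation*}
Consequently $(\limsup_k A_k)\cap \Omega_0 \subseteq \{\trxi:g(\bm x^*,\trxi)\leq 0\}$.

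Next, I would invoke the reverse Fatou lemma: since $\Pr$ is a finite measure,
\begin{equation*}
\Pr\left(\limsup_{k\to\infty}A_k\right)\;\geq\;\limsup_{k\to\infty}\Pr(A_k)\;\geq\;1-\varepsilon.
\end{equation*}
Because $\Pr(\Omega_0)=1$, we have $\Pr((\limsup_k A_k)\cap \Omega_0)=\Pr(\limsup_k A_k)\geq 1-\varepsilon$, and therefore $\Pr\{\trxi:g(\bm x^*,\trxi)\leq 0\}\geq 1-\varepsilon$, establishing $\bm x^*\in\X_1$.

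The main obstacle is a subtle measurability/limit issue rather than a deep estimate: one has to verify that the almost-sure lower semi-continuity assumption is strong enough to let the inclusion $(\limsup_k A_k)\cap \Omega_0 \subseteq \{g(\bm x^*,\trxi)\leq 0\}$ be used inside a probability computation, and to justify the reverse Fatou inequality on the event space $(\Omega,\F,\Pr)$. Both points are standard once the measurability of $g(\bm x_k,\cdot)$ and of $\Omega_0$ is noted (the former follows from Assumption~\ref{A_1}, the latter from the a.s. lower semi-continuity assumption), but phrasing the argument carefully so that the null-set $\Xi\setminus\Omega_0$ can be discarded without affecting the probability bound is the one place where the proof requires care.
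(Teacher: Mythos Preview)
Your proof is correct and follows essentially the same strategy as the paper: sequential characterization of closedness combined with the reverse Fatou inequality and pointwise lower semi-continuity of $g(\cdot,\trxi)$. The only cosmetic difference is that you phrase the argument in terms of the set-theoretic $\limsup_k A_k$ and probabilities, whereas the paper writes the same chain using expectations of indicator functions; your formulation is arguably a bit cleaner, as the inclusion $(\limsup_k A_k)\cap\Omega_0\subseteq\{g(\bm x^*,\trxi)\le 0\}$ makes the role of the subsequence explicit and avoids having to track how $\limsup$ interacts with the indicator $\I(\cdot\le 0)$.
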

		\begin{proof}
			For any sequence $\{\bm x_i\}\in\X_1$ converging to $\bm x_0$, we want to prove that $\bm x_0\in\X_1$.
			Since $\Pr\{ g(\bm x,\trxi)\leq 0 \} = \E[\I(g(\bm x,\trxi)\leq 0)]$,  then we can write set  $	\X_1$ as
			\begin{align*}
				\X_1 =  \left\{ \bm x\in \X\colon \E[\I(g(\bm x,\trxi)\leq 0)] \geq 1-\varepsilon\right\}.
			\end{align*} 
			Since the sequence $\{\bm x_i\}\subseteq\X_1$, we have 
			\begin{align*}
				\limsup_{i\to\infty}  \E[\I(g(\bm x_i,\trxi)\leq 0)]   \geq 1-\varepsilon.
			\end{align*}
			According to Fatou's lemma (see, e.g., section 4 in \citealt{royden1988real}), we have 
			\begin{align*}
				\E\left[	\limsup_{i\to\infty} \I(g(\bm x_i,\trxi)\leq 0)\right] \geq 	\limsup_{i\to\infty}  \E[\I(g(\bm x_i,\trxi)\leq 0)].
			\end{align*}
			Since the indicator function is upper semi-continuous, we have 
			\begin{align*}
				\E\left[	\I( \limsup_{i\to\infty}  g(\bm x_i,\trxi)\leq 0)\right] \geq  \E[	\limsup_{i\to\infty} \I(g(\bm x_i,\trxi)\leq 0)].
			\end{align*}
			Since the indicator function is nonincreasing and the fact that $\limsup_{i\to\infty} g(\cdot,\cdot)\geq \liminf_{i\to\infty} g(\cdot,\cdot)$, we have
			\begin{align*}
				\E[	\I( \liminf_{i\to\infty}  g(\bm x_i,\trxi)\leq 0)] \geq   \E[	\I( \limsup_{i\to\infty}  g(\bm x_i,\trxi)\leq 0)].
			\end{align*}
			According to the assumption that function $g(\bm x,\rxi)$ is lower semi-continuous  and the fact that the indicator function is nonincreasing, we have
			\begin{align*}
				\E[	\I(   g(\bm x_0,\trxi)\leq 0)] \geq  \E[	\I( \liminf_{i\to\infty}  g(\bm x_i,\trxi)\leq 0)],
			\end{align*}
			which implies that  $ \E[	\I(   g(\bm x_0,\trxi)\leq 0)]\geq 		 	\limsup_{i\to\infty}  \E[\I(g(\bm x_i,\trxi)\leq 0)]  \geq 1-\varepsilon$. Thus,  $\bm x_0\in\X_1$, which completes the proof.
			\QEDA 
		\end{proof} 
			We remark that \Cref{prop_lower_continuous_closed}  generalizes proposition 1.7. of \citealt{kall1994stochastic}, where the authors showed that when function $g(\bm x,\rxi)$ is continuous,  the feasible region of CCP \eqref{eq_ccp}  is closed. 
	}
		
		\newpage
		{\section{Tractability of $\alsoxt$ Under Discrete Support or Elliptical Distributions}
			\label{alsox_tractable}}
		{\subsection{Tractability of $\alsoxt$ Under Discrete Support}
			\label{alsox_discrete_tractable}
			If the underlying probability distribution is finite-support with $N$ scenarios, i.e., the random vector $\trxi$ has a finite support $\Xi=\{\rxi^1,\cdots,\rxi^N\}$ with $\Pr\{\trxi=\rxi^i\}=p_i$ for all $i\in[N]$, then CCP \eqref{eq_ccp} reduces to
			\begin{align}
			v^* = \min_{\bm {x}\in \mathcal{X}}\left\{\bm{c}^\top \bm{x} \colon \sum_{i\in [N]}p_i\I(g(\bm x,\bm\xi^i)\leq 0) \geq 1-\varepsilon\right\},\label{eq_ccp_general}
			\end{align}
			and by projecting out functional variable $s(\cdot)$, $\alsox$ \eqref{eq_also_x_2} admits the following form
			\begin{align}
			v^A =\min _{ {t}}\quad & t,\nonumber\\
			\text{s.t.}\quad& \bm x^*\in\argmin_{\bm {x}\in \mathcal{X}}\left\{ \sum_{i\in[N]}p_i[g(\bm x,\bm\xi^i)]_+\colon \bm{c}^\top \bm{x} \leq t \right\},\label{eq_also_x_discrete_general}\\
			& \sum_{i\in[N]}p_i	\I(g(\bm x^*,\bm\xi^i)\leq 0) \geq 1-\varepsilon.\nonumber
			\end{align}
			As a direct application of theorem A.3.3. in \citealt{ben2009robust}, the following corollary shows that under mild conditions, the hinge-loss approximation \eqref{eq_also_x_discrete_general} can be tractable.
			\begin{corollary}
				\label{polynomial_support_tract}
				(theorem A.3.3. in \citealt{ben2009robust}) Suppose that (i) the encoding length of $t$ is polynomial in that of CCP \eqref{eq_ccp_general}; and (ii) the feasible region of the hinge-loss approximation is contained in a Euclidean ball with radius $R$ and is containing a Euclidean ball with radius $r$. Then there exists an efficient algorithm to solve the hinge-loss approximation \eqref{eq_also_x_discrete_general} to $\hat\varepsilon>0$ accuracy, whose running time is polynomial in 
				$n,m,I,N,\ln(R/r),\ln(1/\hat\varepsilon)$, and the encoding length of CCP \eqref{eq_ccp_general}.
			\end{corollary}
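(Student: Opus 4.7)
The plan is to reduce the hinge-loss approximation \eqref{eq_also_x_discrete_general} to the standard setting of theorem A.3.3 in \citealt{ben2009robust}, which provides an ellipsoid-method-based polynomial-time algorithm for convex minimization over a convex feasible region, given that (a) the feasible region is equipped with a polynomial-time separation oracle, (b) the objective admits a polynomial-time first-order (or subgradient) oracle, and (c) the feasible region is sandwiched between two Euclidean balls whose radii $R$ and $r$ have polynomially bounded $\ln(R/r)$. Conditions (b) and (c) are handled by assumption (ii) of the corollary together with the convexity built into Assumption~\ref{A_1}; the substantive task is to argue (a).

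First, I would verify convexity: by Assumption~\ref{A_1}, each $g_i(\bm x,\bm\xi^i)$ is convex in $\bm x$, hence $g(\bm x,\bm\xi^i)=\max_{i\in[I]}g_i(\bm x,\bm\xi^i)$ is convex in $\bm x$, and $[g(\bm x,\bm\xi^i)]_+$ is the composition of a nondecreasing convex function with a convex function, therefore convex. Summing these $N$ convex terms weighted by nonnegative probabilities $p_i$ yields a convex objective. Together with the convex set $\mathcal{X}\subseteq\mathcal{C}$ from Assumption~\ref{A_2} and the linear cut $\bm{c}^\top\bm x\leq t$, the hinge-loss approximation is a convex minimization problem.

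Next, I would construct the required oracles. For the first-order oracle, for each $i\in[N]$ one can obtain a subgradient of $[g(\bm x,\bm\xi^i)]_+$ at $\bm x$ by (i) testing whether $g(\bm x,\bm\xi^i)>0$, in which case we return a subgradient of $g_i(\bm x,\bm\xi^i)$ for an active index $i\in\arg\max_{j\in[I]}g_j(\bm x,\bm\xi^j)$, and otherwise return $\bm 0$; summing these subgradients with weights $p_i$ yields a subgradient of the full objective, computable in time polynomial in $n,m,I,N$ and the encoding length of the inputs. For the separation oracle on the feasible region, a violated $\bm x$ either fails to lie in $\mathcal{X}$ (for which convexity together with the standard assumption that $\mathcal{X}$ is described by convex constraints with their own polynomial separation oracles suffices), or violates $\bm{c}^\top\bm x\leq t$ (trivially separated by the hyperplane $\bm c$). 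Assumption~(i) on the polynomial encoding length of $t$ ensures that this cut is representable with polynomially bounded bit-size.

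Finally, combining the convexity of the objective, the polynomial-time first-order and separation oracles, and the ball-sandwich condition from assumption~(ii), theorem A.3.3 in \citealt{ben2009robust} directly yields an algorithm producing an $\hat\varepsilon$-accurate solution in time polynomial in $n,m,I,N,\ln(R/r),\ln(1/\hat\varepsilon)$, and the encoding length of CCP \eqref{eq_ccp_general}. The main obstacle is really a bookkeeping one, namely exhibiting the separation oracle for $\mathcal{X}$ without additional assumptions; I would handle this by invoking the standard observation that for the convex sets typically encountered in the CCP literature (polyhedra, second-order cones, etc., all contained in the pointed cone $\mathcal{C}$ of Assumption~\ref{A_2}), such oracles are available in polynomial time.
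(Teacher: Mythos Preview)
Your proposal is correct and matches the paper's approach: the paper states this corollary as a direct application of theorem~A.3.3 in \citealt{ben2009robust} with no proof beyond the citation, and your sketch simply spells out the verification that the hypotheses of that theorem (convex objective, polynomial-time first-order and separation oracles, ball-sandwich condition) are satisfied by the hinge-loss approximation under Assumptions~\ref{A_1}--\ref{A_2}. One small notational slip: in your first-order oracle paragraph you overload $i$ for both the scenario index in $[N]$ and the active constraint index in $[I]$; use a different letter for one of them.
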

		}
		
		{\subsection{Tractability of $\alsoxt$ Under Elliptical Distributions}
			\label{alsox_el_tractable}
			For the single linear CCP (1), i.e., $I = 1$ and $g(\bm x,{{\rxi}}) =  \rxi^\top \bm a_1(\bm x)-b_1(\bm x)$ with affine functions $\bm a_1(\bm x)$, $ b_1(\bm x)$, if the random parameters $\tilde{\bm {\xi}}$ follow a joint elliptical distribution with $\tilde{\bm {\xi}} \thicksim \Pr_{\mathrm{E}}(\bm{\mu},\bm{\mathrm{{\Sigma}}},\hat g)$,  CCP \eqref{eq_ccp} reduces to 
			\begin{align}
			v^*=	\min_{\bm{x}\in \mathcal{X}} \left\{\bm{c}^\top \bm{x} \colon b_1(\bm x)-\bm{\mu}^\top\bm{a}_1(\bm {x})\geq \mathrm{\Phi}^{-1}(1-\varepsilon) \sqrt{\bm{a}_1(\bm{x})^\top\bm{\mathrm{{\Sigma}}}\bm{a}_1(\bm {x})}\right\}, \label{ccp_el_general}
			\end{align}
			and by projecting out variable $\alpha$ in \eqref{eq_alsox_el}, $\alsox$ admits the following form 
			\begin{align}
			v^A = &  \min _{ {t}}\quad t\nonumber\\
			&	\begin{aligned}
			\text{s.t.}\quad & \bm x^*\in\argmin_{\bm{x}\in \mathcal{X}} \biggl\{ \biggl(1-\mathrm{\Phi}(\frac{b_1(\bm x)-\bm\mu^\top \bm {x}}{\sqrt{\bm{a}_1(\bm{x})^\top\bm{\mathrm{{\Sigma}}}\bm{a}_1(\bm{x})}})\biggr)\biggl( \bm\mu^\top \bm{a}_1(\bm{x})-b_1(\bm x)\biggr) \\
			& \quad +  \sqrt{\bm{a}_1(\bm{x})^\top\bm{\mathrm{{\Sigma}}}\bm{a}_1(\bm{x})}\overline{G}\biggl(\frac{1}{2}(\frac{b_1(\bm x)-\bm\mu^\top \bm{a}_1(\bm{x}) }{\sqrt{\bm{a}_1(\bm{x})^\top\bm{\mathrm{{\Sigma}}}\bm{a}_1(\bm{x})}})^2\biggr) \colon \bm{c}^\top \bm{x} \leq t\biggr\}, \label{alsox_el_general}
			\end{aligned}  \\
			&\quad\quad\quad b_1(\bm x^*)-\bm{\mu}^\top\bm{a}_1(\bm {x}^*)\geq \mathrm{\Phi}^{-1}(1-\varepsilon) \sqrt{\bm{a}_1(\bm{x}^*)^\top\bm{\mathrm{{\Sigma}}}\bm{a}_1(\bm {x}^*)}.\nonumber
			\end{align}
			Similarly, the following corollary shows that under mild conditions, the hinge-loss approximation \eqref{alsox_el_general} can be tractable.
			\begin{corollary}
				\label{el_tract}
				(theorem A.3.3. in \citealt{ben2009robust}) Suppose that (i) the encoding length of $t$ is polynomial in that of CCP \eqref{ccp_el_general}; and (ii) the feasible region of the hinge-loss approximation is contained in a Euclidean ball with radius $R$ and is containing a Euclidean ball with radius $r$. Then there exists an efficient algorithm to solve the hinge-loss approximation \eqref{alsox_el_general} to $\hat\varepsilon>0$ accuracy, whose running time is polynomial in 
				$n,m,\ln(R/r),\ln(1/\hat\varepsilon)$, and the encoding length of CCP \eqref{ccp_el_general}.
		\end{corollary}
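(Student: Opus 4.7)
The plan is to invoke theorem A.3.3 in \citealt{ben2009robust} directly: that result guarantees a first-order (ellipsoid-type) method solves any convex program to $\hat\varepsilon$-accuracy in time polynomial in the ambient dimension, $\ln(R/r)$, $\ln(1/\hat\varepsilon)$, and the per-iteration oracle cost, provided (a) the feasible set is convex and geometrically sandwiched by balls of radii $r$ and $R$, (b) the objective is convex, and (c) function values and subgradients are computable in time polynomial in the encoding length of the problem. So the task reduces to verifying (a)--(c) for the hinge-loss approximation \eqref{alsox_el_general}, after which condition (ii) supplies the required $R,r$ and condition (i) ensures $t$ does not blow up the bit-complexity.

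For (a), the feasible set is $\mathcal{X}\cap\{\bm{x}\colon \bm{c}^\top\bm{x}\leq t\}$, which is convex by Assumption~\ref{A_2}; a polynomial-time membership/separation oracle follows from the standing description of $\mathcal{X}$ together with the single linear cut $\bm{c}^\top\bm x\leq t$. For (b), rather than wrestling with the elliptical-distribution formula, I would invoke Proposition~\ref{hinge_loss_truncated_el}, which identifies the objective in \eqref{alsox_el_general} with $\mathbb{E}[(\tilde{\bm\xi}^\top\bm a_1(\bm x)-b_1(\bm x))_+]$. Since $\bm a_1(\bm x),b_1(\bm x)$ are affine and $(\cdot)_+$ is convex and nondecreasing, the integrand is convex in $\bm x$ for every realization $\bm\xi$, and expectation preserves convexity, so the objective is convex.

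For (c), at any feasible $\bm x$ evaluating the objective amounts to evaluating $\Phi$ and $\overline{G}$ at the scalar $\alpha(\bm x)=(b_1(\bm x)-\bm\mu^\top\bm a_1(\bm x))/\sqrt{\bm a_1(\bm x)^\top\bm\Sigma\bm a_1(\bm x)}$, plus elementary linear-algebra work on $\bm a_1(\bm x)$ and $b_1(\bm x)$. A subgradient is then obtained by the chain rule from the derivative $\partial_\alpha[\overline{G}(\alpha^2/2)-\alpha+\alpha\Phi(\alpha)]=\Phi(\alpha)-1$ computed in the proof of \Cref{exact_el}, combined with the gradients of $\bm a_1(\bm x)^\top\bm\Sigma\bm a_1(\bm x)$ and $b_1(\bm x)-\bm\mu^\top\bm a_1(\bm x)$.

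The main obstacle is the per-iteration cost in (c), because $\Phi$ and $\overline{G}$ are transcendental integrals of the univariate elliptical density. For all named elliptical families used in practice (Gaussian, Student-$t$, Laplace, logistic), these can be computed to accuracy $\hat\varepsilon$ in time $\operatorname{polylog}(1/\hat\varepsilon)$ by standard numerical routines, which is the sense in which ``polynomial'' must be read here; alternatively, one simply postulates a unit-cost oracle for the univariate CDF and generating function, mirroring the setting of theorem A.3.3. Once this convention is fixed, (a)--(c) are in hand and theorem A.3.3 of \citealt{ben2009robust} delivers the stated complexity bound.
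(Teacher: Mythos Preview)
Your proposal is correct and takes essentially the same approach as the paper: both invoke theorem A.3.3 of \citealt{ben2009robust}. The paper in fact gives no proof beyond the parenthetical citation in the corollary statement itself, so your verification of the convexity of the feasible set and objective, together with the oracle-cost discussion, simply fills in the details the paper leaves implicit.
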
}
		
		\newpage
		
		{\section{An Example when $\alsoxt$ Fails to Find any Feasible Solution}\label{sec_append_example2}
			\begin{example}
				\label{alsox_infeasible_example}
				\rm
				Consider a CCP with 3 equiprobable scenarios (i.e., $N=3$, $\Pr\{\tilde{\rxi}=\bm\xi^i\}=1/N$), risk level $\varepsilon=1/3$, set $\mathcal{X}=\Re_+^2$, function $g(\bm{x},{\rxi})=-{\rxi}_1^\top \bm{x} +\xi_2$, and 
				$\bm\xi_1^1 =(1,0)^\top$, $\bm\xi_1^2 =(0,1)^\top$, $\bm\xi_1^3 =(1,1)^\top $, $\xi_2^1=\xi_2^2=1, \xi_2^3=-1$.
				The optimal value of this CCP can be found by solving the following mixed-integer linear program
				\begin{equation*}
				v^*=	\min_{\bm{x}\in\Re_+^2}\left\{ x_1+x_2\colon  \I(x_1\geq 1)+ \I(x_2\geq 1)+ \I(-x_1-x_2\geq -1)\geq 2 \right\},
				\end{equation*}	
				i.e., $v^*=1$. \\
				$\alsox$  of this example may be infeasible, which can be formulated as
				\begin{align*}
				&v^A =\min _{ {t}}\,\biggl\{ t\colon \sum_{i\in[3]}	\I({s^*_i}=0) \geq 2,\\
				&
				( \bm x^*,\bm s^*)\in\argmin_{\bm{x}\in\Re^2_+,\bm{s}\in\Re^3_+}\biggl\{\frac{1}{3} \sum_{i\in[3]}s_i\colon  x_1\geq 1-s_1,x_2\geq 1-s_2,-x_1-x_2\geq -1-s_3,x_1+x_2\leq t  \biggr\} \biggr\}.
				\end{align*}
				When $t\geq 1$, the hinge-loss approximation returns a solution with 
				$x_1^*=1/2, x_2^*=1/2, s^*_1=1/2,s^*_2=1/2, s_3^*=0$, and the support size of $\bm s^*$ is greater than $1$, then we have to increase the objective bound $t$ to the infinity. Therefore, in this example, $\alsox$ cannot return any feasible solution. Simple calculations show that $\CVaR$ approximation is also infeasible in this example.
				\QEDB
		\end{example}}

		\newpage
		{
			\section{Complexity of CCP \eqref{ccp_el} when $\varepsilon\in(0.5,1)$}\label{proof_el_hard}
			\begin{restatable}{proposition}{elhard}\label{el_hard} 
				When $\varepsilon\in(0.5,1)$, CCP \eqref{ccp_el} in general is NP-hard.
			\end{restatable}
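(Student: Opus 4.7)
The plan is to establish NP-hardness by a polynomial-time reduction from a classical NP-hard problem, leveraging the fact that when $\varepsilon\in(0.5,1)$, we have $\mathrm{\Phi}^{-1}(1-\varepsilon)<0$. Letting $\kappa:=-\mathrm{\Phi}^{-1}(1-\varepsilon)>0$, the constraint of CCP \eqref{ccp_el} becomes $\kappa\sqrt{\bm a_1(\bm x)^\top \bm{\mathrm{\Sigma}} \bm a_1(\bm x)}\geq \bm\mu^\top \bm a_1(\bm x)-b_1(\bm x)$, which defines a reverse second-order cone region that is in general non-convex and is the structural source of intractability; the goal is to encode a hard Euclidean-norm-lower-bound question directly inside this reverse SOC constraint.

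First, I would reduce from the following decision problem, which is well known to be NP-hard as the decision version of maximizing a convex quadratic function over a polytope: given a bounded rational polytope $P=\{\bm y\in \Re^n_+:\bm A\bm y\leq \bm b\}$ and a positive rational $r$, decide whether $\max_{\bm y\in P}\|\bm y\|_2\geq r$. Given such an instance, I would construct a CCP \eqref{ccp_el} instance by introducing an auxiliary scalar $t\in[0,r]$, setting $\mathcal{X}:=P\times [0,r]\subseteq \Re^{n+1}_+$, choosing $\tilde{\bm\xi}$ to be the standard Gaussian on $\Re^n$ (a special elliptical distribution with $\bm\mu=\bm 0$ and $\bm{\mathrm{\Sigma}}=\bm I$), and using the single linear uncertain function $g((\bm y,t),\trxi):=\trxi^\top\bm y+t-r$. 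Via the Gaussian CDF, the chance constraint $\Pr\{g((\bm y,t),\trxi)\leq 0\}\geq 1-\varepsilon$ collapses to $\|\bm y\|_2\geq (r-t)/\kappa$. Taking cost vector $\bm c:=(\eta,\ldots,\eta,1)^\top$ with a rational $\eta>0$ of polynomial bit-length chosen sufficiently small, the resulting optimal value $v^*$ has dominant term $t^*=\max\{0,\,r-\kappa\max_{\bm y\in P}\|\bm y\|_2\}$, which encodes the YES/NO answer to the source decision problem.

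The main obstacle will be managing the perturbation $\eta$ so that Assumption~\ref{A_3} is satisfied: it requires $\bm c\in \mathrm{int}(\C^*)\cup\{\bm 0\}$, and the natural choice $\C=\Re_+^{n+1}$ (a closed pointed convex cone containing $\mathcal{X}$) forces $\bm c>\bm 0$ strictly, so one cannot simply take $\bm c=\bm 0$. Assumptions~\ref{A_1} and \ref{A_2} are direct (the linear $g$ is convex and lower semi-continuous in the decision variables; the bounded polyhedron $\mathcal{X}$ lies in $\Re_+^{n+1}$), and feasibility follows because $(\bm 0,r)\in\mathcal{X}$ trivially satisfies the collapsed constraint $0\geq(r-r)/\kappa$. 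The delicate technical step is to choose the rational $\eta$ of polynomial encoding length while ensuring that a polynomial-size gap in $v^*$ separates the YES and NO instances, which will rely on standard bit-length bounds for vertices of rational polyhedra to certify that the threshold between the two cases can be tested in polynomial time.
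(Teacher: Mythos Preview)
There is a sign error that makes your encoding vacuous. With $g((\bm y,t),\tilde{\bm\xi})=\tilde{\bm\xi}^\top\bm y + t - r$ you have, in the notation of \eqref{ccp_el}, $\bm a_1=\bm y$, $b_1=r-t$, $\bm\mu=\bm 0$, so your own rewritten constraint $\kappa\|\bm a_1\|_2\geq \bm\mu^\top\bm a_1 - b_1$ reads $\kappa\|\bm y\|_2\geq -(r-t)=t-r$, not $\kappa\|\bm y\|_2\geq r-t$. For every $t\in[0,r]$ this is automatically satisfied, so the chance constraint imposes nothing and the CCP collapses to the convex problem $\min\{\eta\,\bm e^\top\bm y+t:\bm y\in P,\ t\in[0,r]\}$; the norm-maximization instance is lost. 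You can repair the sign by taking $b_1=t-r$ (equivalently $g=\tilde{\bm\xi}^\top\bm y+r-t$), after which the constraint indeed becomes $\|\bm y\|_2\geq (r-t)/\kappa$. But then the threshold actually encoded is $\max_{\bm y\in P}\|\bm y\|_2\geq r/\kappa$, not $\geq r$: the YES/NO separation you need now involves the generally irrational $\kappa=-\Phi^{-1}(1-\varepsilon)$ in the decoding, stacked on top of the $\eta$-perturbation you already flagged, and the polynomial-gap argument you sketch becomes substantially more delicate than a one-line appeal to vertex bit-length bounds.

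The paper avoids both the perturbation and the $\kappa$-decoding by a different reduction. It takes an arbitrary binary program $\min\{\bm c^\top\bm x:\bm D\bm x\geq\bm d,\ \bm x\in\{0,1\}^n\}$, introduces auxiliary $\bm y$ with $\bm x+\bm y=\bm e$ and $\bm x,\bm y\in[0,1]^n$, sets $\bm a_1(\bm x,\bm y)=(\bm x,\bm y)$, $\bm\Sigma=\bm I_{2n}$, and chooses $b_1-\bm\mu^\top\bm a_1=\Phi^{-1}(1-\varepsilon)\sqrt{n}$. The chance constraint becomes $\Phi^{-1}(1-\varepsilon)\sqrt{n}\geq \Phi^{-1}(1-\varepsilon)\sqrt{\sum_i(x_i^2+y_i^2)}$; dividing by the common negative factor yields the purely combinatorial condition $\sum_i(x_i^2+y_i^2)\geq n$, which together with $\bm x+\bm y=\bm e$ forces $\bm x\in\{0,1\}^n$. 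The CCP is then exactly the binary program, with no $\kappa$ left in the equivalence and no perturbation $\eta$ needed.
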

			\begin{proof}
				Let us first consider the NP-hard problem - optimization of a general binary program \citep{garey1979guide}, which asks
				\begin{quote}\it
					\textbf{Optimization of a general binary program.} Given an integer matrix $\bm D \in \Ze^{m\times n}$, and integer vector $\bm d\in\Ze^m$, what is an optimal solution of the problem $\min_{\bm x\in\{0,1\}^n}\{ \bm c^\top \bm x : \bm D\bm x\geq \bm d\}$? 
				\end{quote}
				Consider a special case of CCP \eqref{ccp_el}, where set $\X=\{(\bm x,\bm y):  \bm D\bm x\geq \bm d,\bm x+\bm y=\bm e,  \bm x,\bm y\in[0,1]^n\}$, affine functions $b_1(\bm x,\bm y)-\bm{\mu}^\top\bm{a}_1(\bm {x},\bm y) =\mathrm{\Phi}^{-1}(1-\varepsilon)\sqrt{n}$ and $\bm{a}_1(\bm x,\bm y)=(\bm x,\bm y)$, the covariance matrix $\bm{\mathrm\Sigma}=\bm I_{2n}$. In this case, CCP \eqref{ccp_el}  can be rewritten as
				\begin{align*}
				v^*=	\min_{\bm{x},\bm y} \left\{\bm{c}^\top \bm{x} \colon \mathrm{\Phi}^{-1}(1-\varepsilon) \sqrt{n} \geq \mathrm{\Phi}^{-1}(1-\varepsilon) \sqrt{\sum_{i\in[n]}(x_i^2+y_i^2)},\bm x+\bm y=\bm e, \bm D\bm x\geq \bm d,  \bm x,\bm y\in[0,1]^n \right\}.
				\end{align*}
				Since $\varepsilon\in(0.5,1)$, we must have $\mathrm{\Phi}^{-1}(1-\varepsilon)<0$ and CCP \eqref{ccp_el} is
				\begin{align*}
				v^*=	\min_{\bm{x},\bm y} \left\{\bm{c}^\top \bm{x} \colon n \leq \sum_{i\in[n]}(x_i^2+y_i^2),\bm x+\bm y=\bm e, \bm D\bm x\geq \bm d,  \bm x,\bm y\in[0,1]^n \right\}.
				\end{align*}
				Since for each $i\in [n]$, the maximization problem $\max_{x_i,y_i\in [0,1]}(x_i^2+y_i^2)=1$ has two optimal solutions $x_i=0,y_i=1$ or $x_i=1,y_i=0$, the constraint $n \leq \sum_{i\in[n]}(x_i^2+y_i^2)$ is satisfied if and only if $\bm{x}\in \{0,1\}^n$ and $\bm x+\bm y=\bm e$. Thus, projecting out variables $\bm{y}$, CCP \eqref{ccp_el} can be further reduced to
				\begin{align*}
				v^*=	\min_{\bm{x}} \left\{\bm{c}^\top \bm{x} \colon  \bm D\bm x\geq \bm d,  \bm x\in\{0,1\}^n \right\}.
				\end{align*}
				which is exactly the desirable binary program.  This completes the proof.	\QEDA
			\end{proof}
		}

		\newpage
		\section{Comparing $\alsoxt +$ \Cref{alg_alsox_+} and Exact Big-M Model}\label{numerical_big_m}
		Big-M model is known to work well for solving a CCP \citep{ahmed2017nonanticipative}. Albeit being a heuristic, the proposed $\alsox +$ \Cref{alg_alsox_+} can effectively identify better feasible solutions than the exact Big-M model with a much shorter solution time. To illustrate this, we use $``\textrm{UB}" $ and $``\textrm{LB}" $ to denote the best upper bound and the best lower bound found by the $\text{Big-M}$ model. Since we may not be able to solve the $\text{Big-M}$ model to optimality within the time limit, we use GAP to denote its optimality gap as
		\begin{align*}
		\textrm{GAP} (\%)= \frac{| \textrm{UB} -\textrm{LB} | }{|\textrm{LB}|}\times 100,
		\end{align*}
		while we use the term ``Improvement'' to denote the solution quality of $\alsox +$ \Cref{alg_alsox_+}
		\begin{align*}
		\textrm{Improvement} (\%)= \frac{\textrm{UB} -\textrm{value of the $\alsox +$ \Cref{alg_alsox_+}} }{|\textrm{UB} |}\times 100.
		\end{align*}
		The numerical results are shown in \Cref{tab_nonlinear_comparisons_single_small} and \Cref{tab_nonlinear_comparisons_single_large}. 
		It is seen that {for most instances, especially for those with a larger problem dimension, the $\text{Big-M}$ model cannot be solved to optimality, while $\alsox +$ \Cref{alg_alsox_+} can provide better solutions than the best upper bounds found by the $\text{Big-M}$ model in a much shorter time, and $\alsox+$ can consistently find near-optimal solutions or even optimal solutions,}
		which further validates the efficacy of our proposed methods.
		\begin{table}[htbp]
			\centering
			\caption{Comparisons Between the Exact Big-Model and $\alsox+$ \Cref{alg_alsox_+} for Solving the Nonlinear CCP with Small Instances}
			\setlength{\tabcolsep}{2pt} 
			\renewcommand{\arraystretch}{1} 
			\label{tab_nonlinear_comparisons_single_small}
			\tiny
			\begin{center}
				\begin{tabular}{c c  r r r r| r r  r r }
					\hline
					\multicolumn{1}{c}{ \multirow{3}*{$N$} }&
					\multicolumn{1}{c}{ \multirow{3}*{$n$} }&
					\multicolumn{4}{c|}{$\varepsilon=0.05$} & \multicolumn{4}{c}{$\varepsilon=0.10$} \\
					\cline{3-10}
					& &
					\multicolumn{2}{c}{$\text{Big-M}$ Model} &
					\multicolumn{2}{c|}{$\alsox +$} &
					\multicolumn{2}{c}{$\text{Big-M}$ Model} &
					\multicolumn{2}{c}{$\alsox +$} 	\\
					\cline{3-10}
					\multicolumn{1}{c}{} & &Gap (\%)&Time (s) & \makecell{Improve-\\ment (\%)} &Time (s) &Gap (\%)&Time (s) & \makecell{Improve-\\ment (\%)} &Time (s)\\
					\hline	
					\multirow{3}{*}{{30}} 
					&{20} & 0.00& 5.87&0.00& 5.58& 0.00 & 8.93&0.00& 7.31\\
					\cline{2-10}
					&{40} & 0.00& 12.62&0.00&8.46& 0.00 & 19.53&0.00&9.89\\
					\cline{2-10}
					&{100} & 0.00&2076.30&-0.21&8.98 & 0.00 &3454.75&-0.22&10.03 \\
					\cline{1-10}
					\multirow{3}{*}{{40}} 
					&{20} & 0.00& 7.68 &0.00 & 8.67  &0.00 & 12.53 &0.00 & 11.37  \\
					\cline{2-10}
					&{40} & 0.00& 23.45&0.00& 11.97  & 0.00& 263.56 &0.00 & 15.84  \\
					\cline{2-10}
					&{100} &2.44& 3600&-0.30& 17.39&6.86& 3600&-0.92& 20.53\\
					\cline{1-10}
					\multirow{3}{*}{{50}} 
					&{20} & 0.00 & 15.22 & -0.23 &10.87& 0.00 & 98.01 &-0.33  & 11.93  \\
					\cline{2-10}
					&{40} & 0.00& 66.36&-0.32& 12.46&0.00& 2190.43&-0.18& 13.71\\
					\cline{2-10}
					&{100} &3.42& 3600&-0.81& 18.53&19.67& 3600&0.05& 25.49\\
					\hline
				\end{tabular}
			\end{center}
		\end{table}
		
		\begin{table}[htbp]
			\centering
			\caption{Comparisons Between the Exact Big-Model and $\alsox+$ \Cref{alg_alsox_+} for Solving the Nonlinear CCP with Large Instances}
			\setlength{\tabcolsep}{2pt} 
			\renewcommand{\arraystretch}{1} 
			\label{tab_nonlinear_comparisons_single_large}
			\tiny
			\begin{center}
				\begin{tabular}{c c  r r r r| r r  r r }
					\hline
					\multicolumn{1}{c}{ \multirow{3}*{$N$} }&
					\multicolumn{1}{c}{ \multirow{3}*{$n$} }&
					\multicolumn{4}{c|}{$\varepsilon=0.05$} & \multicolumn{4}{c}{$\varepsilon=0.10$} \\
					\cline{3-10}
					& &
					\multicolumn{2}{c}{$\text{Big-M}$ Model} &
					\multicolumn{2}{c|}{$\alsox +$} &
					\multicolumn{2}{c}{$\text{Big-M}$ Model} &
					\multicolumn{2}{c}{$\alsox +$} 	\\
					\cline{3-10}
					\multicolumn{1}{c}{} & &Gap (\%)&Time (s) & \makecell{Improve-\\ment (\%)} &Time (s) &Gap (\%)&Time (s) & \makecell{Improve-\\ment (\%)} &Time (s)\\
					\hline	
					\multirow{3}{*}{400} 
					&20 & 14.48& 3600 &1.31 &16.74 & 14.83& 3600 &0.47 &13.29 \\
					\cline{2-10}
					&40 & 16.72& 3600&1.93 & 16.84 & 29.81& 3600&1.99 & 16.53\\
					\cline{2-10}
					&100 &198.71& 3600&2.81& 31.68&58.20& 3600&2.04& 25.25\\
					\cline{1-10}
					\multirow{3}{*}{600} &20 & 12.37& 3600&1.95& 17.61& 27.51 & 3600&2.19& 22.08\\
					\cline{2-10}
					&40 & 25.74& 3600&1.86&18.39 & 35.14 & 3600&1.69&22.88\\
					\cline{2-10}
					&100 & 251.84&3600&1.33&28.21 & 78.06 &3600&2.63&31.60 \\
					\cline{1-10}
					\multirow{3}{*}{1000} &20 & 17.70& 3600&2.55& 20.24& 27.76& 3600&1.52& 34.48\\
					\cline{2-10}
					&40 &31.85& 3600 &3.11& 21.74&55.97& 3600 &2.87& 47.55\\
					\cline{2-10}
					&100 & 421.88& 3600&2.18&33.50& 345.89& 3600&2.71&63.30 \\
					\hline
				\end{tabular}
			\end{center}
		\end{table}

	\end{appendices}
\end{document}